\newtheorem{theorem}{Theorem}[section]
\newtheorem{corollary}[theorem]{Corollary}
\newtheorem{example}[theorem]{Example}
\newtheorem{lemma}[theorem]{Lemma}
\newtheorem{proposition}[theorem]{Proposition}
\newtheorem{remark}[theorem]{Remark}
\newenvironment{proof}[1][Proof]{\noindent\textbf{#1.} }{\ \rule{0.5em}{0.5em}}
\begin{document}

\bigskip

\bigskip 
\begin{frontmatter}

\title{$C^{\infty}$ rational approximation and quasi-histopolation of functions with jumps through multinode Shepard functions}

\author[address-CS]{Francesco Dell'Accio\corref{corrauthor}}
\cortext[corrauthor]{Corresponding author}
\ead{francesco.dellaccio@unical.it}

\author[address-CS]{Francesco Larosa}
\ead{francesco.larosa@unical.it}

\author[address-CS]{Federico Nudo}
\ead{federico.nudo@unical.it}

\author[address-MC]{Najoua Siar}
\ead{s.najoua@umi.ac.ma}

\address[address-CS]{Department of Mathematics and Computer Science, University of Calabria, Rende (CS), Italy}

\address[address-MC]{Department of Mathematics, University Moulay Ismail, Meknes, Morocco}

\begin{abstract}
Histopolation, or interpolation on segments, is a mathematical technique used to approximate a function $f$ over a given interval $I=[a,b]$ by exploiting integral information over a set of subintervals of $I$. Unlike classical polynomial interpolation, which is based on pointwise function evaluations, histopolation reconstructs a function using integral data. However, similar to classical polynomial interpolation, histopolation suffers from the well-known Runge phenomenon when integral data are based on a grid with many equispaced nodes, as well as the Gibbs phenomenon when approximating discontinuous functions. In contrast, quasi-histopolation is designed to relax the strict requirement of passing through all the given data points. This inherent flexibility can reduce the likelihood of oscillatory behavior using, for example, rational approximation operators. In this work, we introduce a $C^{\infty}$ rational quasi-histopolation operator, for bounded (integrable) functions, which reconstruct a function by defeating both the Runge and Gibbs phenomena. A key element of our approach is to blend local histopolation polynomials on a few nodes using multinode Shepard functions as blending functions. Several numerical experiments demonstrate the accuracy of our method. 
\end{abstract}

\begin{keyword} 
Interpolation on segments \sep quasi-histopolation \sep multinode Shepard functions \sep rational approximation.
\end{keyword}

\end{frontmatter}

\section{Introduction}
The classical polynomial interpolation is a fundamental technique in numerical analysis and computational mathematics. It consists of approximating, by polynomials, an unknown real-valued function $f$ of one or several real variables by leveraging the values of $f$ at specific points. In the simplest univariate case, given the data
\begin{equation*}
    \left(x_i,y_i\right), \qquad x_i\in [a,b],\qquad  y_i=f\left(x_i\right), \qquad i=0,\dots, n,
\end{equation*}
 the goal of the classical polynomial interpolation is to find a polynomial of degree at most $n$, $p\in \mathbb{P}_n$,  such that
 \begin{equation*}
     p\left(x_i\right)=y_i, \qquad i=0,\dots,n. 
 \end{equation*}
 
The polynomial interpolation problem can be generalized to more abstract settings~\cite{Davis:1975:IAA}. For this, let $$V=\operatorname{span}\left\{ e_1,\dots,e_n \right\}$$ be a vector space of dimension $n$ and let $\left\{\mu_1,\dots,\mu_n\right\}$ be given linear functionals defined on $V$. In this scenario, given the data $\mu_i(f)\in\mathbb{R}$, the \textit{general problem of finite interpolation} aims to find an element $\mathcal{I}[f]\in V$ such that 
\begin{equation}\label{condint}
    \mu_i\left(\mathcal{I}[f]\right)=\mu_i(f), \qquad i=1,\dots,n. 
\end{equation}

The solution of the general problem of finite interpolation exists and it is unique if and only if the associated \textit{Gram matrix}
\begin{equation} \label{eq:gramian}
G = 
\begin{bmatrix}
\mu_{1}\left(e_1\right) & \mu_{1}\left(e_2\right) & \cdots & \mu_{1}\left(e_{n}\right)\\
\mu_{2}\left(e_1\right) & \mu_{2}\left(e_2\right) & \cdots & \mu_{2}\left(e_{n}\right)\\
\vdots  & \vdots  & \ddots & \vdots  \\
\mu_{n}\left(e_1\right) & \mu_{n}\left(e_2\right) & \cdots & \mu_{n}\left(e_{n}\right)\\
\end{bmatrix}
\end{equation}
is nonsingular. In this case, the approximation operator 
\begin{equation}\label{operatorI}
    \mathcal{I}:f {\longmapsto}\mathcal{I}[f] 
\end{equation}
is called an \textit{interpolation operator}. If $\mathbb{P}_k\subset V$ then, by the solution's uniqueness of the interpolation problem, it follows that $\mathcal{I}[p]=p$ for any $p\in \mathbb{P}_k$ and we say that $\mathcal{I}$ reproduces polynomials of degree less than or equal to $k$. 

An operator $\mathcal{Q_I}$ which uses the same data $\mu_i(f)$ and satisfies the requirement 
\begin{equation} \label{exactnesspol}
    \mathcal{Q_I}[p]=p, \qquad \forall p \in \mathbb{P}_k,
\end{equation}
without being an interpolation operator, is called a \textit{quasi-interpolation operator}~\cite{Wang:2004:QIW}. Quasi-interpolation operators then satisfy the less restrictive conditions 
\begin{equation}\label{propqi}
\mu_i\left(\mathcal{Q_I}[f]\right) \approx \mu_i\left(f\right), \qquad i=1,\dots,n,
\end{equation}
meaning that they only approximate the given data 
rather than requiring exact interpolation~\eqref{condint}. In practice, quasi-interpolation operators are designed to mitigate oscillations, especially in regions where traditional interpolants may exhibit the Runge phenomenon~\cite{Runge:1901:UEF}.
Moreover, in recent years, quasi-interpolation operators have also been used to approximate functions with jump discontinuities (or discontinuities of the first kind)~\cite{Arandiga1, Arandiga:2024:EAW,Arandiga:2024:NUW, Arandiga2}. 
In this hypothesis, classical interpolation methods often struggle to effectively manage discontinuities, leading to the Gibbs phenomenon~\cite{Gibbs:1898:FSS}. 

When the linear functionals are definite integrals over segments $s_i \subset \mathbb{R}$, i.e.,
\begin{equation}\label{intfuncs}
    \mu_i(f)=\int_{s_i}f(x) dx, \qquad i=1,\dots,n, \qquad \mathcal{S}=\{s_1,\dots,s_n\},
\end{equation}
and $V=\mathbb{P}_{n-1}$, the interpolation problem is also referred to as \textit{histopolation problem} and the polynomial $p \in \mathbb{P}_{n-1}$ satisfying~\eqref{condint} is said an \textit{histopolant} on the set $\mathcal{S}$. 
In this case, we set $\mathcal{H}(f)=\mathcal{I}(f)$ and we call the operator 
\begin{equation*}
    \mathcal{H}:f {\longmapsto}\mathcal{H}[f] 
\end{equation*}
an \textit{histopolation operator}. An operator $\mathcal{Q_{H}}$ satisfying~\eqref{exactnesspol} and~\eqref{propqi} is referred to as a \textit{quasi-histopolation operator}. 
Unlike classical univariate polynomial interpolation, $n$ pairwise distinct histopolation segments do not assure the uniqueness of the histopolation polynomial. A set of segments $\mathcal{S}=\left\{s_1,\dots,s_n\right\}$ is said to be \textit{unisolvent} if the polynomial histopolant on this set is unique. 
This happens, for example, if the segments $s_i$ are bounded by pairs $x_{i-1}, x_{i}$ of pairwise distinct points $x_0<x_1<\dots<x_n$. For more details see~\cite{Bruno:2023:PIO}.    

Histopolation has found several practical applications across various fields, including splines~\cite{Schoenberg:1973:SFA, Fischer:2005:MPR, Fischer:2007:CSP, Siewer:2008:HIS, Hallik:2017:QLR}, fractal functions~\cite{Barnsley:2023:HFF}, preconditioning~\cite{Hiptmair:2007:NAS}, image processing~\cite{Bosner:2020:AOC}, etc.
Unlike classical polynomial interpolation, which requires continuity, histopolation only assumes that the function is essentially bounded, making it applicable to a broader class of functions.
However, like classical polynomial interpolation, histopolation can suffer from the Runge phenomenon when integral data are based on a grid with many equispaced nodes (the ends of the segments), as well as the Gibbs phenomenon when approximating discontinuous functions~\cite{Bruno:2023:PIO}. To address these challenges, in the classical polynomial interpolation, rational approximation operators have been proposed~\cite{Petrushev:2011:RAO, DellAccio:2016:APT, DellAccio:2019:RCM, 
DellAccio:2020:HSM}. The construction of rational approximation operators by blending local histopolants using classical Shepard functions is also discussed in~\cite{Demichelis:1995:GAO}.

 The main goal of this work is to introduce a $C^{\infty}$ rational quasi-histopolation operator for bounded (integrable) functions, which is able to reconstruct a function while overcoming the Runge and Gibbs phenomena, by employing multinode Shepard functions. Among the assumptions of work, in case of discontinuous functions, we assume that the number of discontinuities is finite and that the discontinuity points have been localized by one of the standard methods, such as using the smoothness indicators, see~\cite{Canny:1986:ACA}, which detect whether a node is near a discontinuity.

The structure of the paper is as follows. Section~\ref{sec2} introduces a $C^{\infty}$ rational quasi-histopolation operator constructed by combining multinode Shepard functions with local histopolants. Furthermore, the key properties of the multinode Shepard functions are recalled. In Section~\ref{sec3}, leveraging these properties, we investigate an error bound associated with the proposed quasi-histopolation operator in the case of equispaced nodes.  Section~\ref{sec4} presents numerical results that demonstrate the accuracy of the proposed method. Finally, conclusions and future perspectives of work are provided in Section~\ref{conclusions}.

\section{Rational quasi-histopolation operators}\label{sec2}
In order to ensure the unisolvence of all local polynomial histopolants  considered below, we assume that $X_n=\left\{x_i \,:\, i=0,\dots,n \right\}$ is a set of $n+1$ nodes in $[a,b]$ such that 
 \begin{equation*}
     a=x_0<x_1<\dots<x_{n-1}<x_n=b.
 \end{equation*}

 We denote by $f$ an unknown real-valued, not necessarily continuous, function defined on $[a,b]$. 
 We aim to reconstruct the function $f$ by assuming to know the following data
 \begin{equation}\label{intefun}
     \mu_i(f)=\int_{s_i} f(x) dx, \qquad i=1,\dots,n,
 \end{equation}
where 
\begin{equation*}
    s_i=\left[x_{i-1},x_i\right], \quad i=1,\dots,n,
\end{equation*}
and
\begin{equation}\label{neweq}
\mathcal{S}_n=\left\{s_i \,:\, i=1,\dots,n \right\},
\end{equation}
is the \textit{set of segments based on the set} of nodes $X_n$.
To this end, we introduce a $C^{\infty}$ rational quasi-histopolation operator using multinode Shepard functions and local histopolants.

In order to define this approximation operator, some settings are required. By letting $m<n$, we define 
\begin{equation*}
    Y_m=\left\{y_j \,:\, j=0,\dots,m\right\}
\end{equation*}
as the set of points where the function $f$ is discontinuous, assuming that
\begin{equation*}   y_j\in\mathring{s}_{\sigma(j)}=\left(x_{\sigma(j)-1},x_{\sigma(j)}\right), \quad j=0,\dots,m,
\end{equation*}
where
\begin{equation*}
    \sigma: \{0,\dots,m\} \rightarrow \{1,\dots,n\}
\end{equation*}
is a strictly increasing function and $\mathcal{S}^{\prime}_m=\left\{ s_{\sigma(j)} \, : \, j=0,\dots,m \right\} \subset \mathcal{S}_n$. 
We introduce the $m+2$ sets
\begin{equation}\label{intervalI}
    I_0=\bigcup_{k=1}^{\sigma
    (0)-1} s_{k}, \quad I_{\ell}=\bigcup_{k=\sigma
    (\ell-1)+1}^{\sigma(\ell)-1} s_{k}, \quad
    \ell=1,\dots,m, \quad I_{m+1}= \bigcup_{k=\sigma
    (m)+1}^{n} s_{k},
\end{equation}
and notice that, for any $\ell\in\left\{0,\dots,m+1\right\}$, only one of the following holds
\begin{itemize}
    \item $I_{\ell}=\emptyset$,
    \item $I_{\ell}$ is an interval without singularities of $f$.
\end{itemize}
In the last case, we call $I_{\ell}$ \textit{interval of continuity} of $f$, we denote its length by $\left\lvert I_{\ell} \right\rvert$ and we set
\begin{equation*}
 h_{\ell}=\left\{ \begin{array}{cl}
   0 & \text{if }  I_{\ell}=\emptyset, \\[10pt]
  \left\lvert I_{\ell} \right\rvert & \text{if } I_{\ell}\neq \emptyset.
\end{array}\right.
\end{equation*}
We also set
\begin{equation*}
h^{\mathrm{min}}=\min\limits_{\ell=0,\dots,{m+1}} \left\{  h_{\ell} \,:\, h_{\ell} \neq 0  \right\}, 
\end{equation*}

\begin{equation}\label{hsn}
h^{\min}_{\mathcal{S}_n}=\min\limits_{i=1,\dots,n} \left\{ \left\lvert s_i\right\rvert \,:\, s_i \notin \mathcal{S}^{\prime}_m \right\}, \qquad h^{\max}_{\mathcal{S}_n}=\max\limits_{i=1,\dots,n} \left\{ \left\lvert s_i\right\rvert \,:\, s_i \notin \mathcal{S}^{\prime}_m \right\}
\end{equation}
and assume that 
\begin{equation}\label{hi}
   h^{\mathrm{min}} \geq  h^{\max}_{\mathcal{S}_n}. 
\end{equation}

\begin{example}
    Let assume that $a=0$, $b=1$, $x_i=\frac{i}{10}$, $i=0,1,\dots,10$, and $y_0=\frac{1}{20}$, $y_1=\frac{7}{20}$, $y_2=\frac{17}{20}$. Consequently  $\sigma(0)=1$, $\sigma(1)=4$, $ \sigma(2)=9$, and
    \begin{equation*}
        I_0=\emptyset, \quad I_1=\left[x_1,x_3\right], \quad I_2=\left[x_4, x_8\right], \quad I_3=[x_9,x_{10}].
    \end{equation*}
\end{example}

% \begin{remark}
%  We notice that if
%  \begin{equation*}
%      \sigma(k+1)=\sigma(k)+1 \text{ for some } k=0,\dots,m-1,
%  \end{equation*}
%  then $y_k \in s_{\sigma(k)}$ and $y_{k+1} \in s_{\sigma(k+1)}$. In this case, by definition~\eqref{intervalI}, it results $I_{k+1}=\emptyset$.
% \end{remark}
\begin{remark}
If the function $f$ is continuous, the set $Y_m = \emptyset$. In this case, we set 
\begin{equation*}
    I_0=\bigcup_{i=1}^n s_i =[a,b].
\end{equation*}
In other words, we have only one interval of continuity. 
\end{remark}
To introduce the quasi-histopolation operator under the condition~\eqref{hi}, we let $d\in\mathbb{N}_0$ denote a non-negative integer. The construction is based on the existence of a finite covering of each interval of continuity $I_{\ell}$ by intervals $I\subset I_{\ell}$, each of which contains at least $d+1$ segments $s_i \subset I$ from the dataset $\{(s_i, \mu_i(f))\}_{i=1}^n$, that allows the introduction of a histopolation polynomial of degree at least $d$. To this aim, for any two intervals $V_1,V_2 \subset [a,b]$ we set 
\begin{equation*}
\Psi\left(V_1,V_2\right)=\begin{cases}
        1, & \text{if } V_1 \subset V_2\\
        0, &  \text{otherwise.}
    \end{cases}
\end{equation*}
Moreover, for any $r>0$ and for any $ x \in  \bigcup\limits_{\ell=0}^{m+1} I_{\ell}$, we set  
\begin{equation*}
    \mathcal{I}_{r,x}=\left\{I\subset \bigcup_{\ell=0}^{m+1} I_{\ell} \, :  \, \text{$I$ is a closed interval,} \, \lvert I \rvert = r, \, \exists s \in \mathcal{S}_n \, \text{ s.t. } \, x\in s \subset I \right\},
\end{equation*}
and for any $d\in\mathbb{N}_0$ we also set
\begin{equation}\label{rd}
    r_d=\inf \left\{r\in\mathbb{R}_{+}\, :\, ( r\leq h^{\mathrm{min}}) \wedge \left(\forall x \in  \bigcup_{\ell=0}^{m+1} I_{\ell} \quad \exists \,  I \in \mathcal{I}_{r,x} \, \text{ s.t.}  \,  \sum_{i=1}^n \Psi(s_i,I)\geq d+1\right)\right\}.
\end{equation}

% by $r_d \in \mathbb{R}_+ $ the smallest positive real number $r$, if it exists, such that
% \begin{equation*} 
% \textcolor{red}{(} r\leq h^{\mathrm{min}}\textcolor{red}{)} \wedge \left(\textcolor{red}{\forall} x \in  \bigcup_{\ell=0}^{\textcolor{red}{m+1}} I_{\ell} \quad \textcolor{red}{\exists} \,  I \in \mathcal{I}_{r,x} \, \textcolor{red}{ s.t. } \,  \sum_{i=1}^n \Psi(s_i,I)\geq d+1\right).
%     \end{equation*}
% Then we have 
\begin{theorem}\label{theorem:hxnmax}
Under the assumption~\eqref{hi}, $r_0=h^{\max}_{\mathcal{S}_n}$.
% the following result holds: 
%  \begin{equation*}
%  r_0=\inf \left\{r\in\mathbb{R}_{+}\, :\, \textcolor{red}{(} r\leq h^{\mathrm{min}}\textcolor{red}{)} \wedge \left(\textcolor{red}{\forall} x \in  \bigcup_{\ell=0}^{\textcolor{red}{m+1}} I_{\ell} \quad \textcolor{red}{\exists} \,  I \in \mathcal{I}_{r,x} \, \textcolor{red}{ s.t. } \,  \sum_{i=1}^n \Psi(s_i,I)\geq 1\right)\right\}  
% \end{equation*}
% exists.
\end{theorem}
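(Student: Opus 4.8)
The plan is to establish the two inequalities $r_0\le h^{\max}_{\mathcal{S}_n}$ and $r_0\ge h^{\max}_{\mathcal{S}_n}$ separately, after first unwinding the meaning of the defining condition of $r_d$ in~\eqref{rd} in the case $d=0$. The key preliminary observation is that, by the very definition of $\mathcal{I}_{r,x}$, every $I\in\mathcal{I}_{r,x}$ contains at least one segment $s\in\mathcal{S}_n$, so that $\sum_{i=1}^n\Psi(s_i,I)\ge 1$ holds automatically; hence, for $d=0$, the inner requirement in~\eqref{rd} reduces to ``$\mathcal{I}_{r,x}\ne\emptyset$ for every $x\in\bigcup_{\ell=0}^{m+1}I_\ell$''. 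I would also record the elementary fact that two consecutive nonempty intervals of continuity are separated by a full segment $s_{\sigma(\cdot)}$ of positive length, so that the nonempty intervals of continuity are pairwise disjoint with positive gaps; consequently, any interval contained in $\bigcup_{\ell=0}^{m+1}I_\ell$ lies inside a single $I_\ell$.

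For the bound $r_0\le h^{\max}_{\mathcal{S}_n}$, I would take $r=h^{\max}_{\mathcal{S}_n}$ and verify the reduced condition. The constraint $r\le h^{\mathrm{min}}$ is precisely assumption~\eqref{hi}. Now fix $x\in\bigcup_{\ell=0}^{m+1}I_\ell$; then $x$ belongs to a unique nonempty interval of continuity, say $I_\ell=[\alpha,\beta]$, and $x\in s_k=[x_{k-1},x_k]$ for some segment $s_k\subset I_\ell$ with $s_k\notin\mathcal{S}^{\prime}_m$, so $|s_k|\le h^{\max}_{\mathcal{S}_n}=r$. Since moreover $|I_\ell|\ge h^{\mathrm{min}}\ge r$, a short computation shows that there exists a real number $u$ with $\alpha\le u\le\beta-r$, $u\le x_{k-1}$, and $x_k\le u+r$: the four resulting inequalities on $u$ all follow from $s_k\subset I_\ell$ together with $|s_k|\le r\le|I_\ell|$. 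For such $u$ the closed interval $I=[u,u+r]$ satisfies $x\in s_k\subset I\subset I_\ell$, hence $I\subset\bigcup_{\ell=0}^{m+1}I_\ell$, and $|I|=r$, so $I\in\mathcal{I}_{r,x}$. Therefore $\mathcal{I}_{r,x}\ne\emptyset$ for every such $x$, the value $r=h^{\max}_{\mathcal{S}_n}$ belongs to the set whose infimum defines $r_0$ in~\eqref{rd}, and thus $r_0\le h^{\max}_{\mathcal{S}_n}$.

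For the reverse bound, I would show that no $r<h^{\max}_{\mathcal{S}_n}$ can satisfy the covering part of the condition. Let $s_{k^*}\notin\mathcal{S}^{\prime}_m$ be a segment with $|s_{k^*}|=h^{\max}_{\mathcal{S}_n}$, and let $x$ be its midpoint; since $s_{k^*}$ is contained in some nonempty interval of continuity, $x\in\bigcup_{\ell=0}^{m+1}I_\ell$. Because $x$ lies in the interior of $s_{k^*}$ and distinct segments of $\mathcal{S}_n$ meet only at nodes, the only segment of $\mathcal{S}_n$ containing $x$ is $s_{k^*}$ itself. Hence every $I\in\mathcal{I}_{r,x}$ must contain $s_{k^*}$, which forces $h^{\max}_{\mathcal{S}_n}=|s_{k^*}|\le|I|=r$, contradicting $r<h^{\max}_{\mathcal{S}_n}$. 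Thus $\mathcal{I}_{r,x}=\emptyset$ for this $x$, the covering condition fails, and no $r<h^{\max}_{\mathcal{S}_n}$ lies in the defining set, so $r_0\ge h^{\max}_{\mathcal{S}_n}$. Combining the two bounds yields $r_0=h^{\max}_{\mathcal{S}_n}$.

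I do not expect a genuine obstacle here. The only place requiring a little care is the bookkeeping in the upper-bound step, namely checking that the admissible range of the left endpoint $u$ is nonempty; the potentially awkward configurations — an interval of continuity consisting of a single segment, or $x$ happening to be a node — are absorbed, respectively, by the inequalities above and by the midpoint choice in the lower-bound step. It is also worth noting that assumption~\eqref{hi} is exactly what guarantees that the set appearing in~\eqref{rd} is nonempty for $d=0$, so that $r_0$ is indeed finite and equal to $h^{\max}_{\mathcal{S}_n}$.
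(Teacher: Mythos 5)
Your proof is correct. The lower bound ($r_0\ge h^{\max}_{\mathcal{S}_n}$, via the midpoint of a longest segment not in $\mathcal{S}^{\prime}_m$) is exactly the argument in the paper. Where you genuinely diverge is the upper bound: the paper proves that $h^{\max}_{\mathcal{S}_n}$ satisfies the defining condition of~\eqref{rd} by an inductive construction of a whole finite covering $\mathcal{U}_{\ell}=\{U_{\ell,1},\dots,U_{\ell,n_{\ell}}\}$ of each interval of continuity, marching from the left endpoint $a_\ell$ and treating several boundary cases, whereas you give a direct pointwise argument: for each $x$ you exhibit a single admissible interval $I=[u,u+r]$ by checking that the four linear constraints on the left endpoint $u$ are compatible, which follows from $|s_k|\le r\le |I_\ell|$. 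Your route is shorter and cleaner for the theorem as stated, and your preliminary remark that for $d=0$ the condition $\sum_i\Psi(s_i,I)\ge 1$ is automatic (so the requirement reduces to $\mathcal{I}_{r,x}\ne\emptyset$) is a genuine simplification the paper does not exploit. What the paper's heavier construction buys is reuse: the covering $\mathcal{U}_\ell$ built in that proof is referenced later (in the definition of the families $\mathcal{F}_\ell$, $\mathcal{U}_\ell$ and in Theorem~\ref{theoremrd}) as the actual covering on which the operator is built, so the paper is proving more than the equality $r_0=h^{\max}_{\mathcal{S}_n}$; your argument establishes the equality but would not by itself supply that covering.
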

\begin{proof}
    Let $\ell\in\{0,\dots,m+1\}$ be such that 
$    I_\ell= \left[a_\ell, b_\ell\right]\neq \emptyset.$
For each $x \in I_\ell$, if $x \notin X_n$ we denote by $s(x)$ the unique segment in $\mathcal{S}_n$ that contains $x$; 
if instead $x \in X_n$, then $x$ lies at the boundary of at most two segments in $\mathcal{S}_n$, which we denote by $s_-(x)$ (the segment for which $x$ is the right endpoint) and
$s_+(x)$ (the segment for which $x$ is the left endpoint). To prove the theorem, we show that it is possible to construct a sequence of intervals $\mathcal{U}_{\ell}=\left\{U_{\ell,1}\dots,U_{\ell,n_{\ell}}\right\}$ such that, for any $j=1,\dots,n_{\ell}$
\begin{equation}\label{lengthandsum}
    \left \lvert U_{\ell,j} \right \rvert = h^{\max}_{\mathcal{S}_n}, \qquad \sum_{i=1}^n \Psi \left( s_i, U_{\ell,j}\right) \geq 1,    
\end{equation}
and, for any $x \in I_{\ell}$ 
\begin{equation}\label{xinsx}
    x \in s \left(x\right) \subset U_{\ell,j}\subset I_{\ell},
\end{equation}
for some $j=1,\dots,n_{\ell}$. The last property ensures that
\begin{equation}
\bigcup_{j=1}^{n_{\ell}} U_{\ell,j}=I_{\ell}.
\end{equation}
 From $h^\mathrm{min}\ge h^{\max}_{\mathcal{S}_n}\ge \left\lvert s_+\left(a_\ell\right) \right\rvert$ it results
        \begin{equation}
            a_\ell\in s_+\left(a_{\ell}\right)\subset \left[a_\ell,a_\ell+h^{\max}_{\mathcal{S}_n}\right]\subset I_\ell
        \end{equation}
        and we set 
        \begin{equation}
U_{\ell,1}=\left[a_{\ell,1},b_{\ell,1}\right]=\left[a_\ell,a_\ell+h^{\max}_{\mathcal{S}_n}\right].
        \end{equation}
Then 
\begin{equation*}
     \left \lvert U_{\ell,1} \right \rvert = h^{\max}_{\mathcal{S}_n}, \qquad \sum_{i=1}^n \Psi \left( s_i, U_{\ell,1} \right) \geq 1,
\end{equation*}
so that, if $b_{\ell,1}=b_{\ell}$, we have got the sequence $\mathcal{U}_{\ell}$, since the property~\eqref{xinsx} is automatically ensured. Otherwise, we set
\begin{equation*}
    s=\begin{cases}    s_+\left(b_{\ell,1}\right), &\text{ if } b_{\ell,1} \in X_n, \\
       s(b_{\ell,1}) & \text{ if } b_{\ell,1} \notin X_n,
    \end{cases}
\end{equation*}
and either $\min s+h^{\max}_{\mathcal{S}_n}> b_\ell$ or $\min s+h^{\max}_{\mathcal{S}_n}\leq b_\ell$.\\
If $\min s +h^{\max}_{\mathcal{S}_n}> b_\ell$, we set 
\begin{equation*}
U_{\ell,2}=\left[a_{\ell,2},b_{\ell,2}\right]=\left[b_{\ell}-h_{\mathcal{S}_n}^{\max},b_{\ell}\right]. 
\end{equation*}
Then
\begin{equation*}
     \left \lvert U_{\ell,2} \right \rvert = h^{\max}_{\mathcal{S}_n}, \qquad \sum_{i=1}^n \Psi \left( s_i, U_{\ell,2} \right) \geq 1
\end{equation*}
since
\begin{equation}
b_\ell\in s_-\left(b_{\ell}\right)\subset \left[b_\ell-h^{\max}_{\mathcal{S}_n},b_\ell\right]\subset I_\ell
\end{equation}
and 
\begin{equation*}
   U_{\ell,1}\cap U_{\ell,2} \neq \emptyset 
\end{equation*}
since $a_{\ell,2}=b_{\ell}- h^{\max}_{\mathcal{S}_n} < \min s \leq b_{\ell,1} < b_{\ell}=b_{\ell,2}$. 
By construction, for any $x \in I_{\ell}=U_{\ell,1} \cup U_{\ell,2}$ 
\begin{equation}
   x\in s(x) \subset U_{\ell,1} \quad \text{or} \quad x \in s(x)\subset U_{\ell,2},
\end{equation}
and then we have got the sequence $\mathcal{U}_{\ell}$.\\
If, instead, $\min s +h^{\max}_{\mathcal{S}_n}\leq b_\ell$, we set 
\begin{equation*}
U_{\ell,2}=\left[a_{\ell,2},b_{\ell,2}\right]=\left[\min s,\min s+h_{\mathcal{S}_n}^{\max}\right].
\end{equation*}
Then 
\begin{equation*}
    \left\lvert U_{\ell,2} \right\rvert = h^{\max}_{\mathcal{S}_n}, \qquad \sum\limits_{i=0}^n \Psi\left(s_i,U_{\ell,2}\right)\geq1,
\end{equation*}
and by setting
      \begin{equation*}
  s^{\prime} =  \begin{cases}
        s_{-}\left(\min s + h_{\mathcal{S}_n}^{\max}\right) & \text{if } \min s + h_{\mathcal{S}_n}^{\max} \in X_n,\\
        s \left(\min s + h_{\mathcal{S}_n}^{\max}\right) & \text{if } \min s + h_{\mathcal{S}_n}^{\max} \notin X_n,
   \end{cases}     
   \end{equation*}
   we notice that, for any $x \in \left[a_{\ell,1}, \min s\right]$ it results 
   \begin{equation*}
       x \in s(x) \subset U_{\ell,1},
   \end{equation*}
    while, for any $x \in \left[\min s, \min s^{\prime}\right]$ it results  
   \begin{equation*}
       x \in s(x) \subset U_{\ell,2}.
   \end{equation*} 
   Moreover, $U_{\ell,1}\cap U_{\ell,2} \neq \emptyset$. If $b_{\ell,2}= b_{\ell}$, we have got the sequence $\mathcal{U}_{\ell}$. Otherwise, we assume that we have already got a sequence of intervals $U_{\ell,1},\dots, U_{\ell,k-1}$, $k \geq 3$, such that
   \begin{equation*}
       \left \lvert U_{\ell,j} \right \rvert = h^{\max}_{\mathcal{S}_n}, \qquad  \sum\limits_{i=0}^n \Psi\left(s_i,U_{\ell,j}\right)\geq1,
   \end{equation*}
for any $j=1,\dots,k-1$, and, by setting
   \begin{equation*}
  s^{\prime} = \begin{cases}
         s_- \left(b_{\ell,k-1}\right) & \text{if } b_{\ell,k-1} \in X_n,\\
        s \left(b_{\ell,k-1}\right) & \text{if } b_{\ell,k-1} \notin X_n,
   \end{cases}     
   \end{equation*}
  for any $x \in \left[ a_{\ell,1}, \min s^{\prime} \right]$, it results   
   \begin{equation*}
       x \in s(x) \subset U_{\ell,j},
   \end{equation*}
     for some $j=1,\dots,k-1$. The previous step shows that it is possible to construct another interval of the sequence, which will either allow us to obtain the sequence $U_{\ell}$ or require another step. The process certainly ends since $\mathcal{S}_n$ is finite.\\ 
The existence of the sequence $\mathcal{U}_{\ell}$ satisfying~\eqref{lengthandsum} and~\eqref{xinsx} show that $h^{\max}_{\mathcal{S}_n}$ satisfies the condition
    \begin{equation*}
        ( h^{\max}_{\mathcal{S}_n}\leq h^{\mathrm{min}}) \wedge \left(\forall x \in  I_{\ell} \quad \exists \,  I \in \mathcal{I}_{h^{\max}_{\mathcal{S}_n},x} \,  \text{ s.t. }  \,  \sum_{i=1}^n \Psi(s_i,I)\geq 1\right).
    \end{equation*}
    Finally, let  $s^{\max}=\left[x_{i_{\max}},x_{i_{\max}+1}\right]$ be an interval of length $h_{\mathcal{S}_n}^{\max}$ and 
$x_{\operatorname{middle}}=\left(x_{i_{\max}+1}+x_{i_{\max}}\right)/2$ its middle point. Since, for any positive number $r<h^{\max}_{\mathcal{S}_n}$, the set $\mathcal{I}_{r,x_{\operatorname{middle}}}$ is the empty set, the thesis follows.
\end{proof}

Let ${D}_i=\left[x_i,x_{i+d+1}\right]$, $i=0,\dots, n-d-1$. In analogy with the previous theorem, it is possible to prove the following result. 
\begin{theorem} \label{theoremrd}
 Under the assumption
\begin{equation*}
h^{\min} \geq h^{\max,d}_{\mathcal{S}_n}=\max_{\ell=0,\dots,m+1} \max_{\substack{i = 0, \dots, n-d-1 \\ D_i \subset I_\ell}}\left\lvert {D}_i\right\rvert, 
\end{equation*} 
 $r_d$ exists and it is strictly positive.
% $$r_d=\max_{\ell=0,\dots,m+1} \max_{\substack{i = 0, \dots, n-d-1 \\ D_i \subset I_\ell}}\left\lvert {D}_i\right\rvert.$$
\end{theorem}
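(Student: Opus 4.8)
The plan is to prove the two assertions of the statement one at a time: that the set over which the infimum in~\eqref{rd} is computed is nonempty (so that $r_d$ is a genuine real number), and that this infimum is not $0$. For the first point I would verify that $r=h^{\max,d}_{\mathcal{S}_n}$ already lies in that set; for the second I would produce a uniform positive lower bound valid for every admissible $r$. Throughout I would use the standing assumption underlying the construction that each nonempty interval of continuity $I_\ell$ contains at least $d+1$ segments of $\mathcal{S}_n$ (equivalently, that the family $\{(i,\ell):D_i\subset I_\ell\}$ is nonempty and that each such $I_\ell$ accommodates at least one block $D_i$).

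For existence, note first that $r=h^{\max,d}_{\mathcal{S}_n}\le h^{\min}$ is precisely the hypothesis, so it remains to check the quantified clause of~\eqref{rd} for this $r$. Fix a nonempty $I_\ell=[x_p,x_q]$ — its endpoints are nodes and $q-p\ge d+1$ — and a point $x\in I_\ell$. Pick a segment $s=[x_t,x_{t+1}]\in\mathcal{S}_n$ with $x\in s\subset I_\ell$, exactly as the segments $s(x),s_\pm(x)$ are selected in the proof of Theorem~\ref{theorem:hxnmax} (and $s=s_q$ when $x=x_q$). An elementary index count shows that the interval of integers $\max(p,t-d)\le i\le\min(t,q-d-1)$ is nonempty; for such an $i$ one has $s\subset D_i\subset I_\ell$. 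Since $|D_i|\le h^{\max,d}_{\mathcal{S}_n}=r$ while $|I_\ell|=h_\ell\ge h^{\min}\ge r$, there is a closed interval $I$ of length $r$ with $D_i\subset I\subset I_\ell$. Then $x\in s\subset I$ gives $I\in\mathcal{I}_{r,x}$, and $D_i=s_{i+1}\cup\cdots\cup s_{i+d+1}\subset I$ forces $\sum_{i'=1}^{n}\Psi(s_{i'},I)\ge d+1$. Hence $h^{\max,d}_{\mathcal{S}_n}$ satisfies~\eqref{rd}, the set is nonempty, and $0\le r_d\le h^{\max,d}_{\mathcal{S}_n}$. (Alternatively one can mimic the sequential construction of the covering $\mathcal{U}_\ell$ in the proof of Theorem~\ref{theorem:hxnmax}, merely replacing ``contains a segment'' by ``contains a block $D_i\subset I_\ell$''; the argument and its termination, which rests on the finiteness of $\mathcal{S}_n$, are unchanged.)

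For strict positivity, put $\delta=\min_{\ell=0,\dots,m+1}\ \min_{\{i\,:\,D_i\subset I_\ell\}}|D_i|$, a minimum over a finite nonempty family, hence $\delta>0$. Any closed interval $I\subset\bigcup_{\ell}I_\ell$ is contained in a single $I_\ell$, because the $I_\ell$ are pairwise disjoint and separated by the singularity segments $s_{\sigma(\ell)}$, each of positive length. If moreover $\sum_{i=1}^{n}\Psi(s_i,I)\ge d+1$, the $\ge d+1$ segments contained in the connected set $I$ are consecutive, say $s_{k+1},\dots,s_{k+d+1}$, so that $D_k\subset I$ and $D_k\subset I_\ell$; therefore $|I|\ge|D_k|\ge\delta$. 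Consequently, for every $r<\delta$ and every $x\in\bigcup_\ell I_\ell$ (a nonempty set) no $I\in\mathcal{I}_{r,x}$ can fulfil the sum condition, so such $r$ is not admissible in~\eqref{rd}; it follows that $r_d\ge\delta>0$.

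The main obstacle will be the placement step inside the existence part: fitting an interval of the prescribed length $h^{\max,d}_{\mathcal{S}_n}$ around an arbitrary $x\in I_\ell$, in particular when $x$ sits at or near an endpoint of $I_\ell$, while still engulfing a whole block $D_i$ of $d+1$ consecutive segments. This is the $d\ge 1$ counterpart of the endpoint case distinction ($\min s+h^{\max}_{\mathcal{S}_n}\gtrless b_\ell$) carried out in the proof of Theorem~\ref{theorem:hxnmax}, and it is exactly where the hypothesis enters, through the chain $|D_i|\le h^{\max,d}_{\mathcal{S}_n}\le h^{\min}\le|I_\ell|$ that secures enough room; the remaining ingredients are routine index bookkeeping and the finiteness of $\mathcal{S}_n$.
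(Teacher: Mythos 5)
Your proposal is correct, and its existence half follows the same basic route as the paper: show that $r=h^{\max,d}_{\mathcal{S}_n}$ itself satisfies the defining condition in~\eqref{rd}, so the set whose infimum defines $r_d$ is nonempty and $r_d\le h^{\max,d}_{\mathcal{S}_n}$. The paper realizes this by sweeping each $I_\ell$ with a chain of overlapping intervals of length $h^{\max,d}_{\mathcal{S}_n}$, exactly as in the proof of Theorem~\ref{theorem:hxnmax}, whereas you place an interval pointwise: for each $x$ you locate a block $D_i$ with $x\in s\subset D_i\subset I_\ell$ via the index count $\max(p,t-d)\le i\le\min(t,q-d-1)$ and then inflate $D_i$ to length $r$ inside $I_\ell$ using the chain $\lvert D_i\rvert\le h^{\max,d}_{\mathcal{S}_n}\le h^{\min}\le\lvert I_\ell\rvert$. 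Your version is actually the safer one: the paper's closing assertion that \emph{any} interval of length $h^{\max,d}_{\mathcal{S}_n}$ contains at least $d+1$ segments is not needed and can fail for strongly non-uniform spacings (a long segment adjacent to short ones can be only partially covered), while your argument only requires the existence, for each $x$, of one well-placed interval. The second half of your proposal — the uniform lower bound $r_d\ge\delta=\min_{\ell}\min_{D_i\subset I_\ell}\lvert D_i\rvert>0$, obtained by noting that any connected $I\subset\bigcup_\ell I_\ell$ containing $d+1$ segments must contain a consecutive run and hence some $D_k$ — is genuinely additional: the paper's proof establishes only that the admissible set is nonempty and never argues that its infimum is strictly positive, so your $\delta$-argument fills a real gap in the published justification of the clause ``$r_d$ is strictly positive.''
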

\begin{proof}
We use the same notation as that employed in the proof of Theorem~\ref{theorem:hxnmax}. To prove the existence of $r_d>0$, we show that it is possible to construct a sequence of intervals $U_{\ell,1}, \dots, U_{\ell,n_{\ell}}$ such that, for any $j=1,\dots,n_{\ell}$,
 \begin{equation*}
       \left \lvert U_{\ell,j} \right \rvert = h^{\max,d}_{\mathcal{S}_n}, \qquad \sum_{i=1}^n \Psi\left(s_i,U_{\ell,j}\right)\geq d+1,
   \end{equation*}
   and for any $x \in I_{\ell}$
   \begin{equation*}
       x \in s\left(x\right) \subset U_{\ell,j} \subset I_{\ell},
   \end{equation*}
   for some $j=1,\dots,n_{\ell}$.
The argument follows similar lines to those used in the proof of Theorem~\ref{theorem:hxnmax}, and relies on the fact that any interval of length $h^{\max,d}_{\mathcal{S}_n}$ must contain at least $d+1$ segments of $\mathcal{S}_n \setminus \mathcal{S}^{\prime}_m$. 
\end{proof}

\begin{remark}
    Since we have only $n$ segments, $r_{n+1}$ does not exist. 
\end{remark}
In the following, we set
\begin{equation} \label{dmax}
  d_{\max}=\max\left\{ d\in\mathbb{N} \,:\, r_d\text{ exists} \right\}.  
\end{equation}
\begin{remark}
    We notice that, by definition, if $r_{d_{\max}}$ exists, then $r_d$ also exists for any $d \in \{0, \dots, d_{\max}\}$.
\end{remark}

For any $\ell=0,\dots,m+1$ such that $I_{\ell}\neq\emptyset$ we set
\begin{equation}\label{Felle}
      \mathcal{F}_{\ell} =  \left\{ I \in \mathcal{I}_{r_d,x} \,  \text{ s.t. }  \,  \sum_{i=1}^n \Psi(s_i,I)\geq d+1 \right\}_{x \in I_{\ell}}
    \end{equation}
    and 
    \begin{equation}
        \mathcal{U}_{\ell} = \left\{ U_{\ell,k}=\left[a_{\ell,k},b_{\ell,k}\right] \in \mathcal{F}_{\ell} \, :\, k=1,\dots,n_{\ell} \right\},
    \end{equation}
    where $U_{\ell,k}$ has been introduced in the proof of Theorem~\ref{theorem:hxnmax} and, by analogy, in the proof of Theorem~\ref{theoremrd}.
We notice that the family $\mathcal{U}_{\ell}$ is a covering of $I_{\ell}$ by subset of $\mathcal{F}_{\ell}$ satisfying the following properties
\begin{itemize}
    \item[(C)] $\forall x \in I_{\ell}, \quad \exists k \in \left\{1, \dots, n_{\ell}\right\}$ such that $x \in s(x) \subset U_{\ell,k}$;
    \item[(M)] $\forall k \in \left\{1, \dots, n_{\ell}\right\}, \quad \exists x \in I_{\ell} $ such that $x\in s(x) \not\subset \bigcup\limits_{\substack{j = 1 \\ j \ne k}}^{n_{\ell}} U_{\ell,j}$.
\end{itemize}
\begin{remark}
Property (C) ensures that every segment $s_i \in S_n$ is fully contained in some interval $U_{\ell,j}$, for suitable $\ell = 0, \ldots, m+1$ and $j = 1, \ldots, n_\ell$. Property (C) is also guaranteed if, in equation~\eqref{Felle}, $r_d$ is substituted by $h^{\max,d}_{\mathcal{S}_n}$. Property (M), on the other hand, guarantees that for each $\ell$ such that $I_\ell \neq \emptyset$, the family $\mathcal{U}_\ell$ satisfying (C) is minimal. 
Property (C) is necessary for the objective of this work, while property (M) is only desirable and not necessarily satisfied if, in equation~\eqref{Felle}, $r_d$ is substituted by $h^{\max,d}_{\mathcal{S}_n}$.
\end{remark}

For any $\ell = 0,\dots,m+1$ such that $I_{\ell}\neq \emptyset$ we cover the interval of continuity $I_{\ell}$ by using subset of intervals $\left\{ U_{\ell,j} \right\}_{j=1}^{n_{\ell}}$, for which we have 
\begin{itemize}
\item[-] $I_{\ell}= \bigcup\limits_{j=1}^{n_{\ell}} U_{\ell,j}$,
\item[-] $\left \lvert U_{\ell,j} \right \rvert = r_d$, $j=1,\dots,n_{\ell}$,
\item[-] $U_{\ell,j} \cap U_{\ell,j-1}\neq \emptyset$, $j=2,\dots,n_{\ell}$,
% $\mathrm{card}\left(U_{\ell,j} \cap U_{\ell,j-1}\right) \textcolor{red}{\geq} 1$, $j=1,\dots,n_{\ell}-1$, and $\mathrm{card}\left(U_{\ell,n_{\ell}} \cap U_{\ell,n_{\ell}-1}\right) \geq 1$,
\end{itemize}
where $n_{\ell}$ is uniquely determined by the minimality condition (M) of the covering.
% \begin{equation*}
% n_{\ell}=\left\{ \begin{array}{lr}
%    \left\lfloor\dfrac{h_{\ell}}{r_d}\right\rfloor-1, & \text{if } \left\lfloor\dfrac{h_{\ell}}{r_d}\right\rfloor=\dfrac{h_{\ell}}{r_d}, \\[10pt]
%   \left\lfloor\dfrac{h_{\ell}}{r_d}\right\rfloor , & \text{if }
%     \left\lfloor\dfrac{h_{\ell}}{r_d}\right\rfloor<\dfrac{h_{\ell}}{r_d}, 
% \end{array}\right.
% \end{equation*}
% with $\left\lfloor \cdot \right\rfloor$ the integer part of the real argument and $\mathrm{card}(\cdot)$ is the cardinality operator. 
Let us denote by $\mathrm{card}(\cdot)$ the cardinality operator. Let us consider 
\begin{equation}\label{setSiota}
\mathcal{S}_{\ell,j}=\left\{s_i \in \mathcal{S}_n \,:\,i=1,\dots,n, \, s_i \subset U_{\ell,j}\right\}, \quad \ell=0, \dots, m+1, \quad j=1,\dots,n_{\ell},
\end{equation}
and for any $\ell=0,\dots,m+1$ and $j=1,\dots,n_{\ell}$, we denote by
\begin{equation}\label{histopolant}
k_{\ell,j}=\mathrm{card}\left(\mathcal{S}_{\ell,j}\right),\qquad 
p_{\ell,j}(x)=p_{\ell,j}(f,x)\in \mathbb{P}_{k_{\ell,j}-1},  
\end{equation}
the histopolation polynomial of the function $f$ on the set of segments $\mathcal{S}_{\ell,j}$. For additional details on the histopolation polynomial, including existence, computational cost and stability properties, we refer the reader to~\cite{Bruno:2023:PIO}.
In some cases below, for simplicity, we represent the pair of indices $(\ell,j)$ using a single index $\iota$, that is
\begin{equation} \label{notation}
    \mathcal{S}_{\ell,j}\rightarrow \mathcal{S}_{\iota}, \quad U_{\ell,j}\rightarrow U_{\iota}, 
    \quad p_{\ell,j}\rightarrow p_{\iota},
    \quad\iota=1,\dots,M, \quad M=\sum\limits_{\ell=0}^{m+1} \left(n_{\ell}+1\right).
\end{equation}
% For any $K\in \mathbb{N}$ and any \textcolor{red}{$\iota=1,\dots,M$, we introduce sets of uniformly distributed points,}
% \textcolor{red}{
% \begin{equation}\label{Ciota}
%     C_{\iota}=\left\{\xi^{\iota}_{\kappa} \, : \,  a_{\iota}<\xi^{\iota}_{1}<\dots<\xi^{\iota}_{K}<b_{\iota}\right\}, \quad a_{\iota}= \min U_{\iota}, \ b_{\iota}=\max U_{\iota},
% \end{equation}
% not necessarily equispaced, such that the sets  $C_\iota$ and $C_{\lambda}$ share the same points on the overlap $U_\iota \cap U_{\lambda}$. The multinode Shepard functions based on the family of the sets $\left\{C_{\iota}\right\}^M_{\iota=1}$ are defined for each $\mu>0$ as follows
% \begin{equation}\label{multinodefunction}
% W_{\mu,\iota}(x)=\frac{\prod\limits_{\kappa=1}^{K} \left\lvert x-\xi^{\iota}_{\kappa} \right\rvert^{-\mu}}{\sum\limits_{\lambda=1}^M\prod\limits_{\kappa=1}^{K} \left\lvert x-\xi^{\lambda}_{\kappa} \right\rvert^{-\mu}}, \qquad \iota=1,\dots,M, \quad x \in [a,b].
% \end{equation}
% }
For any $K\in \mathbb{N}$ and any $\iota=1,\dots,M$, we introduce sets of pairwise distinct points,

\begin{equation}\label{Ciota}
    C_{\iota}=\left\{\xi^{\iota}_{\kappa} \, : \,  a_{\iota}<\xi^{\iota}_{1}<\dots<\xi^{\iota}_{K}<b_{\iota}\right\}, \quad a_{\iota}= \min U_{\iota}, \ b_{\iota}=\max U_{\iota},
\end{equation}
not necessarily equispaced. The multinode Shepard functions based on the sets $\left\{C_{\iota}\right\}^M_{\iota=1}$ are defined for each $\mu>0$ as follows
\begin{equation}\label{multinodefunction}
W_{\mu,\iota}(x)=\frac{\prod\limits_{\kappa=1}^{K} \left\lvert x-\xi^{\iota}_{\kappa} \right\rvert^{-\mu}}{\sum\limits_{\lambda=1}^M\prod\limits_{\kappa=1}^{K} \left\lvert x-\xi^{\lambda}_{\kappa} \right\rvert^{-\mu}}, \qquad \iota=1,\dots,M, \quad x \in [a,b].
\end{equation}

\begin{remark}\label{multinodeproprerties}
The multinode Shepard functions were first introduced in~\cite{dell2016multinode, DellAccio:2018:ROF} to construct global rational interpolants that solve Birkhoff interpolation problems not otherwise solvable by classical approaches, even within appropriate polynomial or rational spaces. The idea is to decompose the initial problem into subproblems, each with a unique local polynomial solution, and then to use multinode rational basis functions, based on the local interpolation points, to obtain a global interpolant by gluing together the local approximants. 
\end{remark}
The multinode Shepard functions are non-negative and form a partition of unity, that is 
\begin{eqnarray}\label{nonnegprop}
   W_{\mu ,\iota}\left(x\right) \geq 0, \\
   \sum\limits_{\iota=1}^M W_{\mu ,\iota}\left(x\right)=1.
   \label{partofuni}
\end{eqnarray}
 Moreover, $W_{\mu,\iota}(x)$ vanishes at all points $\xi^{\lambda}_{\kappa}$ that are not in $C_{\iota}$, that is 
\begin{equation}\label{vanishes}
    W_{\mu,\iota}(\xi^{\lambda}_{\kappa})=0, \qquad \xi^{\lambda}_{\kappa} \notin C_{\iota}.
\end{equation}
By properties~\eqref{partofuni} and~\eqref{vanishes} easily follows that $W_{\mu,\iota}$ and $W_{\mu,\lambda}$ could present wild oscillations in $U_{\iota}\cap U_{\lambda}$, $\iota \neq \lambda$, since the presence of the points $\xi^{\lambda}_{\kappa} \notin C_{\iota}$ and $\xi^{\iota}_{\kappa} \notin C_{\lambda}$. These oscillations can be avoided by forcing the sets $C_\iota$ and $C_{\lambda}$ to share the same points on the overlap $U_\iota \cap U_{\lambda}$.  
\begin{example}
In Figure~\ref{plot1606} the interval $[-1,1]$ is covered by the following sets
$U_{\iota}=\left[a_{\iota},b_{\iota}\right]$, $\iota=1,\dots,9$,
\begin{equation*}
    \begin{array}{lll}
U_1 = [-1.0000,\ -0.6333], & U_2 = [-0.8032,\ -0.4365], & U_3 = [-0.6233,\ -0.2566], \\
U_4 = [-0.3924,\ -0.0257], & U_5 = [-0.1773,\ +0.1894], & U_6 = [+0.0408,\ +0.4075], \\
U_7 = [+0.3290,\ +0.6957], & U_8 = [+0.5646,\ +0.9312], & U_9 = [+0.6333,\ +1.0000],
\end{array}
\end{equation*}
each of length $\frac{11}{30}$.
The left bounds $a_{\iota}$ are represented by green stars, while the right bounds $b_{\iota}$ are represented by red stars. These points, organized in an increasing sequence, subdivide the interval $[-1,1]$ in open intervals $O_{\ell}$.  
% In this particular case, $U_{\iota}$ intersects $U_{\iota+1}$, $\iota=1,\dots,6$, and $U_{7}\cap U_{8} \cap U_{9} \neq \emptyset$. 
Each $C_{\iota}$ contains exactly $10$ points $\xi^{\iota}_{\kappa}$, represented by yellow circles. In the Figure~\ref{plot1606A} the points $\xi^{\iota}_{\kappa}$ are equispaced in each open interval $U_{\iota}$ and wild oscillations of the multinode Shepard functions $W_{\mu,\iota}$ appear. In the Figure~\ref{plot1606B}, the points $\xi^{\iota}_{\kappa}$ in each open interval $O_{\ell}$ are equispaced in $O_{\ell}$, leading to well-behaved multinode Shepard functions $W_{\mu,\iota}$.
\end{example}
\begin{figure}[h!]
    \centering
    \begin{subfigure}[b]{0.49\linewidth}
        \includegraphics[width=\linewidth]{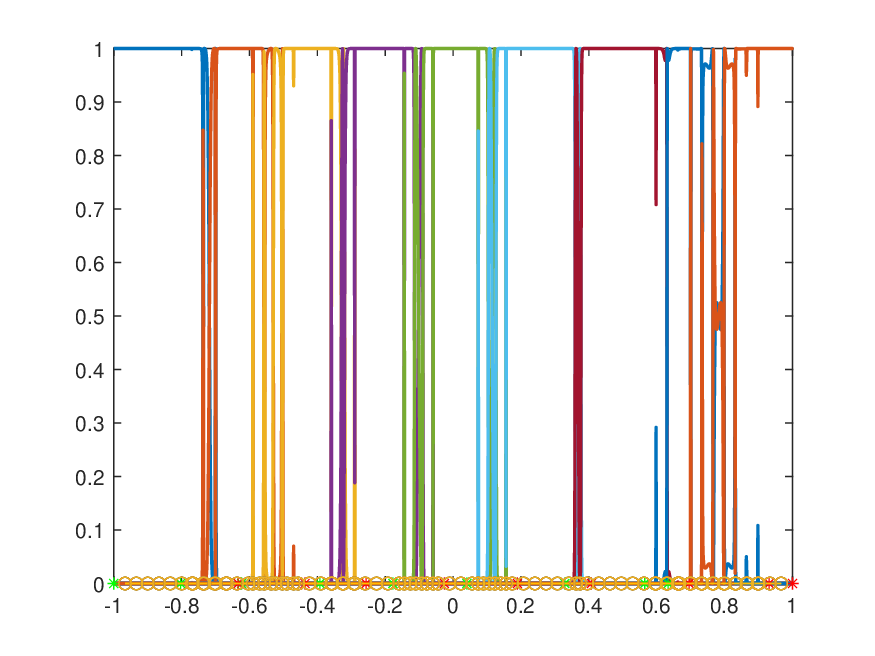}
        \caption{}
        \label{plot1606A}
    \end{subfigure}
    \hfill
    \begin{subfigure}[b]{0.49\linewidth}
        \includegraphics[width=\linewidth]{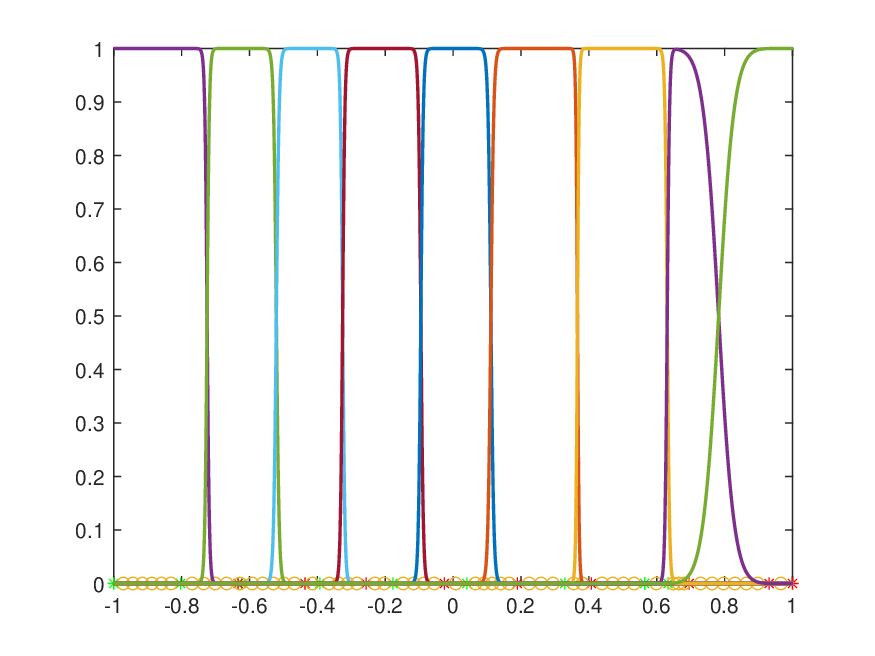}
        \caption{}
        \label{plot1606B}
    \end{subfigure}
    \caption{Plot of the multinode Shepard functions $W_{4,\iota}$, based on the intervals $U_{\iota}=\left[a_{\iota},b_{\iota}\right]$ of length $\frac{11}{30}$, whose left and right endpoints are displayed as green and red stars, respectively. Each function $W_{4,\iota}$ is based on a set $C_{\iota}$ of 10 points $\xi^{\iota}_{k}$, which are displayed as yellow circles. In the left figure, the points $\xi^{\iota}_{\kappa}$ are equispaced in the interval $U_{\iota}$, while in the right figure the sets $C_\iota$ and $C_{\lambda}$ share the same points on the overlap $U_\iota \cap U_{\lambda}$.}
    \label{plot1606}
\end{figure}
% \begin{equation}\label{Ciota}
%     C_{\iota}=\left\{\xi^{\iota}_{\kappa}=a_{\iota}+\frac{b_{\iota}-a_{\iota}}{K+1}\kappa \, :\, \kappa=1,\dots,K \right\}, \quad a_{\iota}= \min U_{\iota}, \ b_{\iota}=\max U_{\iota}.
% \end{equation}
% As a result of the construction, we have 
% \begin{equation}\label{cst}
% b_{\iota}-a_{\iota}=r_d, \qquad \iota=1,\dots,M.  
% \end{equation}
% and 
% \begin{equation} \label{boundM}
%     M\leq \frac{2(b-a)}{r_d}.
% \end{equation}

Finally, we introduce the following rational approximation operator
\begin{equation}\label{finalop}
    \mathcal{Q}_{\mathcal{H},\mu}[f](x)=\sum\limits_{\iota=1}^M W_{\mu,\iota}(x)p_{\iota}(x), \qquad \mu\in 2\mathbb{Z}_{+},
\end{equation}
where $p_{\iota}(x)$ are the local histopolation polynomials of the function $f$ based on the sets $\mathcal{S}_{\iota}$ and $W_{\mu,\iota}(x)$ are the univariate \textit{multinode Shepard functions} based on the sets $C_{\iota}$. 
\begin{remark}\label{remarkimp}
    Since the set $\mathcal{S}_{\iota}$, $\iota=1,\dots,M$, defined in~\eqref{setSiota}, consists of chained intervals, the histopolation polynomial $p_{\iota}(x)$ is unique~\cite{Bruno:2023:PIO}. For this reason, the approximation operator~\eqref{finalop} is well-defined.
\end{remark}

\begin{remark}
    The approximation operator $\mathcal{Q}_{\mathcal{H},\mu}[f]$, defined in~\eqref{finalop}, is not a histopolation operator, that is, in general, it does not interpolate the integral data used in its construction, i.e.
     \begin{equation*}
  \mu_i\left(\mathcal{Q}_{\mathcal{H},\mu}[f]\right)\neq \mu_i(f), \qquad i=1,\dots,n,     
    \end{equation*}
where $\mu_i$ are the linear functionals defined in~\eqref{intefun}. 
% As expected, the integrals of $\mathcal{Q}_{\mathcal{H},\mu}[f]$ over the segments $s_i$ approximate the values $\mu_i(f)$ with the same order of accuracy as $\mathcal{Q}_{\mathcal{H},\mu}[f]$ uniformly approximates $f$ over those segments. 
\end{remark}

\begin{proposition}
The operator $\mathcal{Q}_{\mathcal{H},\mu}[f]$ is a quasi-histopolation operator.  
\end{proposition}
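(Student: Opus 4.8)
The plan is to check directly the two conditions in the definition of a quasi-histopolation operator — the polynomial exactness~\eqref{exactnesspol} and the approximate reproduction of the data~\eqref{propqi} — and to note at the outset that $\mathcal{Q}_{\mathcal{H},\mu}$ uses exactly the integral data $\mu_1(f),\dots,\mu_n(f)$ of~\eqref{intefun}. This last point is immediate from~\eqref{finalop}: every local histopolant $p_\iota = p_\iota(f,\cdot)$ in~\eqref{histopolant} is built only from the integrals $\int_{s_i} f$ over the segments $s_i \in \mathcal{S}_\iota$, while the multinode Shepard functions $W_{\mu,\iota}$ do not depend on $f$.

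For the exactness, I would fix $q\in\mathbb{P}_d$. By construction each set $\mathcal{S}_\iota$ in~\eqref{setSiota} is a chain of $k_\iota = \mathrm{card}(\mathcal{S}_\iota)\ge d+1$ consecutive segments, so $q\in\mathbb{P}_d\subseteq\mathbb{P}_{k_\iota-1}$; since $q$ trivially satisfies the histopolation conditions on $\mathcal{S}_\iota$ and, by Remark~\ref{remarkimp}, the histopolant on a chain of segments is unique, it follows that $p_\iota(q,\cdot)\equiv q$ as polynomials, hence $p_\iota(q,x)=q(x)$ for every $x\in[a,b]$. Then, using the partition-of-unity identity~\eqref{partofuni},
\[
\mathcal{Q}_{\mathcal{H},\mu}[q](x)=\sum_{\iota=1}^M W_{\mu,\iota}(x)\,p_\iota(q,x)=q(x)\sum_{\iota=1}^M W_{\mu,\iota}(x)=q(x),
\]
so~\eqref{exactnesspol} holds with $k=d$ (in fact with $k=\min_\iota(k_\iota-1)$).

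For~\eqref{propqi}, I would fix a segment $s_i$ and, applying~\eqref{partofuni} once more to write $\mu_i(f)=\int_{s_i}f=\sum_\iota\int_{s_i}W_{\mu,\iota}f$, express the deviation as $\mu_i(\mathcal{Q}_{\mathcal{H},\mu}[f])-\mu_i(f)=\sum_\iota\int_{s_i}W_{\mu,\iota}(x)\bigl(p_\iota(x)-f(x)\bigr)\,dx$. By property~(C) there is at least one $\iota$ with $s_i\subset U_\iota$, and for such $\iota$ the histopolant $p_\iota$ reproduces the datum exactly, $\int_{s_i}p_\iota=\int_{s_i}f$; for the remaining indices whose $U_\iota$ meets $s_i$ — finitely many, by the locality of the covering — the difference $p_\iota-f$ is controlled by the local histopolation error on an interval of length $r_d$, which is small. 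Combining with $0\le W_{\mu,\iota}\le1$ and $|s_i|\le h^{\max}_{\mathcal{S}_n}$ yields $\mu_i(\mathcal{Q}_{\mathcal{H},\mu}[f])\approx\mu_i(f)$; the precise, quantitative form of this estimate is what Section~\ref{sec3} provides.

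The only delicate point is the exactness step, and it reduces to the observation that histopolating a polynomial of degree at most $d$ on a chain of at least $d+1$ segments returns that polynomial — a consequence of the degree bound $k_\iota\ge d+1$ built into the covering and of the uniqueness of the histopolant on chained segments (Remark~\ref{remarkimp}). Everything else is bookkeeping: the partition-of-unity property closes the exactness argument, and the approximate data-matching is a soft consequence of the locality of the weights $W_{\mu,\iota}$ together with the smallness of $r_d$.
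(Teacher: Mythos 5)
Your proof of the exactness property is essentially the paper's own argument: polynomials of degree at most $d$ are reproduced by each local histopolant $p_\iota$ because $\mathcal{S}_\iota$ contains at least $d+1$ chained segments and the histopolant on such a set is unique (Remark~\ref{remarkimp}), and the partition-of-unity property~\eqref{partofuni} then closes the computation. The additional paragraph verifying~\eqref{propqi} is sound but not required by the paper's definition, which takes polynomial exactness~\eqref{exactnesspol} as the defining condition and treats~\eqref{propqi} as a consequence.
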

\begin{proof}
    To prove this, by the definition of quasi-histopolation~\cite{Wang:2004:QIW}, it is sufficient to show that 
\begin{equation*}
 \mathcal{Q}_{\mathcal{H},\mu}[p](x)=p(x), \qquad x\in [a,b],
 \end{equation*}
 for any polynomial $p$ of degree less than or equal to $\displaystyle{\min_{\iota=1,\dots,M} \operatorname{deg}(p_{\iota}(x))}\geq d.$
Now, for any $\iota=1,\dots,M$, by Remark~\ref{remarkimp}, the histopolation polynomial on the set $\mathcal{S}_{\iota}$ is unique and then we have
\begin{equation*}
    p_{\iota}(p,x)=p(x), \qquad x\in [a,b].
\end{equation*}
By~\eqref{finalop} and leveraging the partition of unity property~\eqref{partofuni} of the multinode Shepard functions, it results
\begin{equation*}
\mathcal{Q}_{\mathcal{H},\mu}[p](x)=\sum_{\iota=1}^M  W_{\mu,\iota}\left( x\right) p_{\iota}\left(p, x\right)= \sum_{\iota=1}^M  W_{\mu,\iota}\left(x\right) p(x)=p(x), \qquad x\in [a,b].  
\end{equation*}  
\end{proof}

\begin{remark}
    In the process of construction of quasi-histopolation, avoiding extrapolation requires accurately capturing the behavior of the local polynomial histopolant within the interval defined by the input data. To achieve this, the numerical support of the multinode Shepard function must be entirely contained within, and remain close to, this interval. Figure~\ref{functionnord} shows that $U_{\ell,j}$ must have the same length $r_d$ (or $h^{\max,d}_{\mathcal{S}_n}$) to ensure this feature. The Lemma~\ref{lemmaboundBmu} in the next section proves that this condition is also sufficient.
\end{remark}
\begin{figure}
    \centering
    \includegraphics[width=0.5\linewidth]{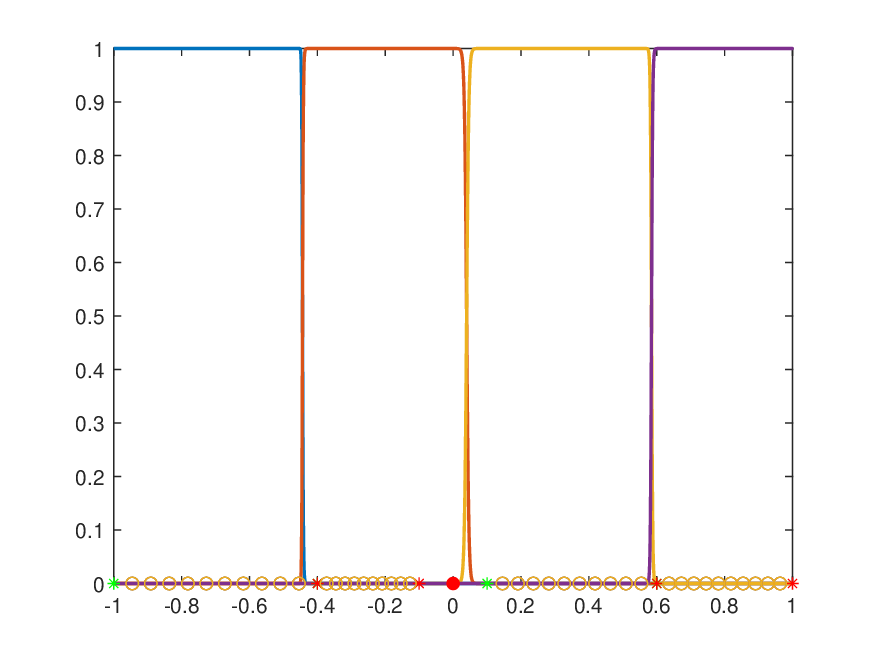}
    \caption{Plot of the multinode Shepard functions $W_{4,{\iota}}$ based on sets of $10$ equispaced points $\xi^{\iota}_{\kappa}$, $\iota=1,\dots,4$, in the open intervals $U_{1}=(-1,-0.4)$, $U_{2}=(-0.4,-0.1)$, $U_{3}=(0.1,0.6)$ and $U_{4}=(0.6,1)$, respectively. The point $0 \in [-0.1,0.1]$ is assumed to be a discontinuity point. The numerical support of $W_{4,2}$ includes this discontinuity while simultaneously capturing the behavior of the local polynomial $p_2$. As a consequence, it leads to a poor extrapolation.}
    \label{functionnord}
\end{figure}

\section{Error bound}\label{sec3}
In this section, we compute an error bound of
the quasi-histopolation operator~\eqref{finalop} on a proper subset
\begin{equation}\label{Xi}
\Omega= \bigcup\limits_{\ell=0}^{m+1} \left[\xi^{\ell}_1,\xi^{n_\ell}_K\right] \subsetneq \bigcup\limits_{\ell=0}^{red}{m+1} I_{\ell},    
\end{equation}
in the case of equispaced nodes $X_n=\left\{a+i\frac{b-a}{n}, \, i=0,\dots,n \right\} \subset\left[a,b\right]$. As in Section~\ref{sec2}, let $d\in \mathbb{N}_0$ denote the degree of the local histopolation polynomials and $r_d>0$ as defined in~\eqref{rd}. The minimal covering of each interval of continuity $I_{\ell}$ is then realized by intervals of the family
\begin{equation*}
    \mathcal{F}_{\ell} = \left\{ \left[x_i,x_i+\left(d+1\right)\dfrac{b-a}{n}\right] \right\}_{x_i \in I_{\ell}}
\end{equation*}
each of which intersects the subsequent one in only a node, with the exception of the rightmost interval, eventually, whose intersection with the previous one may consist of more than one point.
 For the sake of simplicity, we assume that the rightmost interval intersects the previous one in only one node.

The following Lemma, which refers to the simplest non-trivial case of two multinode Shepard functions based on the points $\left\{\xi^1_{k}\right\}$, $\left\{\xi^2_{k}\right\}$, $k=1,\dots,K$, of two consecutive intervals of equal length (see Fig.~\ref{xleft} and Fig.~\ref{xright}), is useful to provide bounds of the multinode Shepard function $W_{\mu,\iota}(x)$, $\iota=1,2$, for different relative positions of $x \in \Omega$.

\begin{figure}
    \centering
  \includegraphics[width=0.8\linewidth]{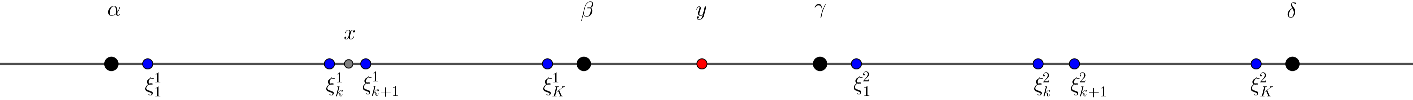}
    \caption{Plot of $K$ equispaced points (in blue) for each of the sets $C_1$ (in the left) and $C_2$ (in the right). The point $x\in \left(\xi^1_k,\xi^1_{k+1}\right)$ is displayed in gray, while the discontinuity point is displayed in red.}
    \label{xleft}
\end{figure}
\begin{figure}
    \centering
  \includegraphics[width=0.8\linewidth]{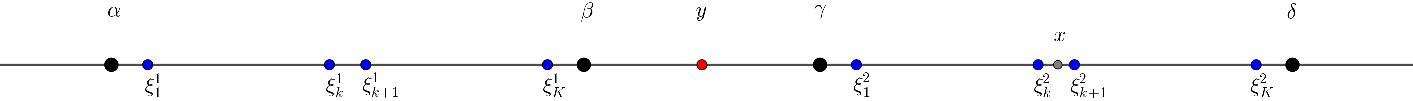}
    \caption{Plot of $K$ equispaced points (in blue) for each of the sets $C_1$ (in the left) and $C_2$ (in the right). The point $x\in (\xi^2_k,\xi^2_{k+1})$ is displayed in gray while the discontinuity point is displayed in red.}
    \label{xright}
\end{figure}
\begin{lemma}\label{lemmaboundBmu}
Let $\alpha,\beta,\gamma,\delta\in \mathbb{R}$ be such that $\alpha<\beta\leq\gamma<\delta$ with
\begin{equation}\label{abc}
  \beta-\alpha=\delta-\gamma=r_d
\end{equation}
 and let  
\begin{equation*}
  C_{1}=\left\{\xi^{1}_{\kappa}=\alpha+\frac{\beta-\alpha}{K+1}\kappa \, :\, \kappa=1,\dots,K \right\} \subset (\alpha,\beta), 
\end{equation*}  
\begin{equation*}
  C_{2}=\left\{\xi^{2}_{\kappa}=\gamma+\frac{\delta-\gamma}{K+1}\kappa \, :\, \kappa=1,\dots,K \right\} \subset (\gamma,\delta).
\end{equation*}
Then, for any $x\in \left(\xi^1_{k},\xi^1_{k+1}\right)$, $k=1,\dots,K-1$, we have 
\begin{equation} \label{Bmu2boundgen}
    W_{\mu,2}(x)\leq \left(\frac{k!(K-k)!}{\prod\limits_{\kappa=1}^K \left(K-k+\kappa\right)+(K+1)^K\left(\frac{\gamma-\beta}{r_d}\right)^K}\right)^{\mu}=F_{\mu}(\gamma-\beta,r_d,k,K).
\end{equation}
Similarly, for any $x\in \left(\xi^2_{k},\xi^2_{k+1}\right)$, $k=1,\dots,K-1$, we have 
\begin{equation} \label{Bmu1boundgen}
    W_{\mu,1}(x) \leq \left(\frac{k!(K-k)!}{\prod\limits_{\kappa=1}^K \left(k+\kappa\right)+(K+1)^K\left(\frac{\gamma-\beta}{r_d}\right)^K}\right)^{\mu}=F_{\mu}(\gamma-\beta,r_d,K-k,K). 
\end{equation} 
\end{lemma}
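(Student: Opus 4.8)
## Proof proposal

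The plan is to estimate $W_{\mu,2}(x)$ directly from its defining formula~\eqref{multinodefunction} with $M=2$: namely
\begin{equation*}
W_{\mu,2}(x)=\frac{\prod_{\kappa=1}^K\lvert x-\xi^2_\kappa\rvert^{-\mu}}{\prod_{\kappa=1}^K\lvert x-\xi^1_\kappa\rvert^{-\mu}+\prod_{\kappa=1}^K\lvert x-\xi^2_\kappa\rvert^{-\mu}}
=\frac{1}{1+\left(\dfrac{\prod_{\kappa=1}^K\lvert x-\xi^1_\kappa\rvert}{\prod_{\kappa=1}^K\lvert x-\xi^2_\kappa\rvert}\right)^{-\mu}}
=\left(1+\left(\frac{\prod_{\kappa=1}^K\lvert x-\xi^2_\kappa\rvert}{\prod_{\kappa=1}^K\lvert x-\xi^1_\kappa\rvert}\right)^{\mu}\right)^{-1}.
\end{equation*}
So the whole statement reduces to a lower bound on the ratio $\rho(x)=\prod_{\kappa=1}^K\lvert x-\xi^2_\kappa\rvert \big/ \prod_{\kappa=1}^K\lvert x-\xi^1_\kappa\rvert$ for $x\in(\xi^1_k,\xi^1_{k+1})$: once we show $\rho(x)\ge \Lambda$ for the appropriate constant $\Lambda$, we get $W_{\mu,2}(x)\le (1+\Lambda^\mu)^{-1}\le \Lambda^{-\mu}$, and matching $\Lambda^{-\mu}$ with the claimed $F_\mu(\gamma-\beta,r_d,k,K)$ is just algebra. (One should double-check whether the paper's $F_\mu$ is $(1+\Lambda^\mu)^{-1}$ or $\Lambda^{-\mu}$; from the displayed form it looks like the denominator inside is exactly $1+\Lambda^\mu$ after clearing the common factor $(K+1)^K/r_d^K$, so I would carry the $+1$ through.)

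First I would bound the numerator $\prod_{\kappa=1}^K\lvert x-\xi^2_\kappa\rvert$ from below. Since $x<\xi^1_{k+1}\le\beta\le\gamma<\xi^2_\kappa$ for all $\kappa$, each factor is $\xi^2_\kappa-x$, and this is minimized (over the admissible range of $x$, pushing $x$ as far right as possible, i.e. $x\to\xi^1_{k+1}$, and using the worst placement) by replacing $x$ with $\beta$ and $\xi^2_\kappa$ with its leftmost possible value. With the explicit equispacing $\xi^2_\kappa=\gamma+\frac{r_d}{K+1}\kappa$ we get $\xi^2_\kappa-x\ge \xi^2_\kappa-\beta=(\gamma-\beta)+\frac{r_d}{K+1}\kappa$, hence
\begin{equation*}
\prod_{\kappa=1}^K\lvert x-\xi^2_\kappa\rvert\ \ge\ \prod_{\kappa=1}^K\left((\gamma-\beta)+\frac{r_d}{K+1}\kappa\right)
=\left(\frac{r_d}{K+1}\right)^K\prod_{\kappa=1}^K\left(\kappa+\frac{(K+1)(\gamma-\beta)}{r_d}\right).
\end{equation*}
Hmm — comparing with the target, the cleaner route is to keep $x$ symbolic and instead bound $\prod_\kappa(\xi^2_\kappa-x)\ge\prod_\kappa(\xi^2_\kappa-\xi^1_{k+1})$ and then $\xi^2_\kappa-\xi^1_{k+1}=(\gamma-\beta)+\frac{r_d}{K+1}(\kappa + (K-k))$ using $\beta-\xi^1_{k+1}=-\frac{r_d}{K+1}(K-k)$; this produces the shifted product $\prod_{\kappa=1}^K(K-k+\kappa)$ visible in~\eqref{Bmu2boundgen} plus the $(\gamma-\beta)/r_d$ correction, which is exactly the denominator shown. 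I would do this substitution carefully to get the index shift right.

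Second I would bound the denominator $\prod_{\kappa=1}^K\lvert x-\xi^1_\kappa\rvert$ from above for $x\in(\xi^1_k,\xi^1_{k+1})$. Here $x$ lies between two of the nodes, so $\lvert x-\xi^1_\kappa\rvert\le\max(\xi^1_{k+1}-\xi^1_\kappa,\ \xi^1_{k+1}-\xi^1_1)$-type estimates are crude; the sharp bound is the standard one for a point in a subinterval of an equispaced grid: $\prod_{\kappa=1}^K\lvert x-\xi^1_\kappa\rvert\le \left(\frac{r_d}{K+1}\right)^K k!\,(K-k)!$, obtained because for $x\in(\xi^1_k,\xi^1_{k+1})$ one has $\lvert x-\xi^1_\kappa\rvert\le\frac{r_d}{K+1}(k-\kappa+1)$ for $\kappa\le k$ and $\lvert x-\xi^1_\kappa\rvert\le\frac{r_d}{K+1}(\kappa-k)$ for $\kappa\ge k+1$, and the product of these telescopes to $k!(K-k)!$ times $(r_d/(K+1))^K$. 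Then $\rho(x)\ge \dfrac{(r_d/(K+1))^K\big(\prod_{\kappa=1}^K(K-k+\kappa)+(K+1)^K((\gamma-\beta)/r_d)^K\big)}{(r_d/(K+1))^K\,k!(K-k)!}$, the $(r_d/(K+1))^K$ cancels, and $W_{\mu,2}(x)=(1+\rho(x)^\mu)^{-1}\le(1+\Lambda^\mu)^{-1}$ with $\Lambda$ the reciprocal of the bracket in~\eqref{Bmu2boundgen}, which is precisely $F_\mu(\gamma-\beta,r_d,k,K)$. The bound~\eqref{Bmu1boundgen} for $x\in(\xi^2_k,\xi^2_{k+1})$ is obtained by the mirror-image argument (reflect everything through the midpoint of $[\beta,\gamma]$, which swaps $C_1\leftrightarrow C_2$ and sends $k\mapsto K-k$), explaining why $F_\mu$ reappears with $K-k$ in place of $k$.

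The main obstacle I anticipate is purely bookkeeping: getting the two index shifts exactly right so that the crude-looking bounds land on the stated closed forms $\prod_{\kappa=1}^K(K-k+\kappa)$ and $\prod_{\kappa=1}^K(k+\kappa)$ rather than off-by-one variants, and confirming that the extremal configuration of $x$ (endpoint $\xi^1_{k+1}$ for the numerator, interior for the denominator) is genuinely the worst case simultaneously — i.e. that the ratio $\rho$ is monotone in $x$ on $(\xi^1_k,\xi^1_{k+1})$ in the direction that makes $x=\xi^1_{k+1}$ the minimizer. If $\rho$ is not monotone one instead bounds numerator and denominator separately at their individual worst points, which is what the two-step estimate above does, so the argument still goes through; but the write-up should make clear that this separate bounding is legitimate (it is, since it only weakens the inequality). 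The role of the hypothesis $\beta-\alpha=\delta-\gamma=r_d$ is exactly to make the two equispaced grids have the same spacing $r_d/(K+1)$, so that the $(r_d/(K+1))^K$ factors cancel cleanly; without~\eqref{abc} the constant would not simplify to $F_\mu$.
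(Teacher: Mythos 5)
Your proposal is correct and follows essentially the same route as the paper's proof: bound $W_{\mu,2}(x)$ by the ratio $\prod_{\kappa}\lvert x-\xi^1_{\kappa}\rvert^{\mu}/\prod_{\kappa}\lvert x-\xi^2_{\kappa}\rvert^{\mu}$, estimate the numerator above by $k!\,(K-k)!\,(r_d/(K+1))^K$ via the standard equispaced-grid bound and the denominator below by replacing $x$ with $\xi^1_{k+1}$, then cancel the common spacing factor; the only step you leave implicit is the elementary superadditivity $\prod_{\kappa}(a_{\kappa}+c)\geq\prod_{\kappa}a_{\kappa}+c^K$ (for $a_{\kappa},c\geq 0$) that splits the lower bound into the two summands appearing in the denominator of $F_{\mu}$. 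On your side question: the paper's $F_{\mu}$ is $\Lambda^{-\mu}$ rather than $(1+\Lambda^{\mu})^{-1}$, which your chain $W_{\mu,2}(x)\leq(1+\Lambda^{\mu})^{-1}\leq\Lambda^{-\mu}$ already delivers.
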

\begin{proof}
 Let $x\in \left(\xi^1_{k},\xi^1_{k+1}\right)$, $k=1,\dots,K-1$. Then we have
 \begin{eqnarray*}
     W_{\mu,2}(x)&=&\frac{\prod\limits_{\kappa=1}^{K} \left\lvert x-\xi^2_{{\kappa}} \right\rvert^{-\mu}}{\prod\limits_{\kappa=1}^{K} \left\lvert x-\xi^1_{{\kappa}} \right\rvert^{-\mu}+\prod\limits_{\kappa=1}^{K} \left\lvert x-\xi^2_{{\kappa}} \right\rvert^{-\mu}} \leq \frac{\prod\limits_{\kappa=1}^{K} \left\lvert x-\xi^1_{{\kappa}} \right\rvert^{\mu}}{\prod\limits_{\kappa=1}^{K} \left\lvert x-\xi^2_{{\kappa}} \right\rvert^{\mu}} \\
     &\leq& \frac{\left\lvert x-\xi^1_{1} \right\rvert^{\mu} \cdots \left\lvert x-\xi^1_{k} \right\rvert^{\mu} \cdot \left\lvert x-\xi^1_{k+1} \right\rvert^{\mu} \cdots \left\lvert x-\xi^1_{K} \right\rvert^{\mu}}{\left\lvert \xi^1_{k+1}-\xi^2_{1} \right\rvert^{\mu} \cdot \left\lvert \xi^1_{k+1}-\xi^2_{2} \right\rvert^{\mu} \cdots \left\lvert \xi^1_{k+1}-\xi^2_{K-1} \right\rvert^{\mu} \cdot \left\lvert \xi^1_{k+1}-\xi^2_{K} \right\rvert^{\mu}} \\
      &\leq&  \left(\frac{(\beta-\alpha)^{K}\frac{1+k-1}{K+1}\cdots \frac{1+0}{K+1}\cdot\frac{1+0}{K+1}\cdots \frac{1+K-k-1}{K+1}}{\left(\frac{(\beta-\alpha)(K-k)+(\delta-\gamma)}{K+1}+(\gamma-\beta)\right)\cdots\left(\frac{(\beta-\alpha)(K-k)+(\delta-\gamma)K}{K+1}+(\gamma-\beta)\right)}\right)^{\mu} \\ &\leq&  \left(\frac{(\beta-\alpha)^{K}k!(K-k)!}{\prod\limits_{\kappa=1}^K \left(\left(\beta-\alpha\right)\left(K-k\right)+\kappa\left(\delta-\gamma\right)\right)+(K+1)^K(\gamma-\beta)^K}\right)^{\mu} \\
      &\leq&  \left(\frac{k!(K-k)!}{\prod\limits_{\kappa=1}^K \left(K-k+\kappa\right)+(K+1)^K\left(\frac{\gamma-\beta}{r_d}\right)^K}\right)^{\mu}
 \end{eqnarray*}
where in the last inequality we used the hypothesis~\eqref{abc}.
Similarly, if $x\in \left(\xi^2_{k},\xi^2_{k+1}\right)$, $k=1,\dots,K-1$, we have 
\begin{eqnarray*}
W_{\mu,1}(x)&=&\frac{\prod\limits_{\kappa=1}^{K} \left\lvert x-\xi^1_{{\kappa}} \right\rvert^{-\mu}}{\prod\limits_{\kappa=1}^{K} \left\lvert x-\xi^1_{{\kappa}} \right\rvert^{-\mu}+\prod\limits_{\kappa=1}^{K} \left\lvert x-\xi^2_{{\kappa}} \right\rvert^{-\mu}} \leq \frac{\prod\limits_{\kappa=1}^{K} \left\lvert x-\xi^2_{{\kappa}} \right\rvert^{\mu}}{\prod\limits_{\kappa=1}^{K} \left\lvert x-\xi^1_{{\kappa}} \right\rvert^{\mu}} \\
&\leq& \frac{\left\lvert x-\xi^2_{1} \right\rvert^{\mu} \cdots \left\lvert x-\xi^2_{k} \right\rvert^{\mu} \cdot \left\lvert x-\xi^2_{k+1} \right\rvert^{\mu} \cdots \left\lvert x-\xi^2_{K} \right\rvert^{\mu}}{\left\lvert \xi^2_{k}-\xi^1_{1} \right\rvert^{\mu} \cdot \left\lvert \xi^2_{k}-\xi^1_{2} \right\rvert^{\mu} \cdots \left\lvert \xi^2_{k}-\xi^1_{K-1} \right\rvert^{\mu} \cdot \left\lvert \xi^2_{k}-\xi^1_{K} \right\rvert^{\mu}} \\
&\leq&  \left(\frac{(d-c)^{K}\frac{1+k-1}{K+1}\cdots \frac{1+0}{K+1}\cdot\frac{1+0}{K+1}\cdots \frac{1+K-k-1}{K+1}}{\left(\frac{(\delta-\gamma)k+(\beta-\alpha)K}{K+1}+(\gamma-\beta)\right)\cdots\left(\frac{(\delta-\gamma)k+(\beta-\alpha)}{K+1}+(\gamma-\beta)\right)}\right)^{\mu} \\
&\leq&  \left(\frac{(\delta-\gamma)^{K}k!(K-k)!}{\prod\limits_{\kappa=1}^K \left(\left(\delta-\gamma\right)k+\kappa\left(\beta-\alpha\right)\right)+(K+1)^K(\gamma-\beta)^K}\right)^{\mu}\\
&\leq&  \left(\frac{k!(K-k)!}{\prod\limits_{\kappa=1}^K \left(k+\kappa\right)+(K+1)^K\left(\frac{\gamma-\beta}{r_d}\right)^K}\right)^{\mu}.    
\end{eqnarray*}
\end{proof}

\begin{remark}
    The strength of  method hinges on the ability of the multinode Shepard functions to closely approximate the characteristic functions of the equally sized intervals \(U_{\iota}\) that support the local histopolants (see Fig.~\ref{intersectionofWi}). Lemma~\ref{lemmaboundBmu} guarantees this property under the given assumptions.
\end{remark}
\begin{figure}
    \centering
    \includegraphics[width=8cm]{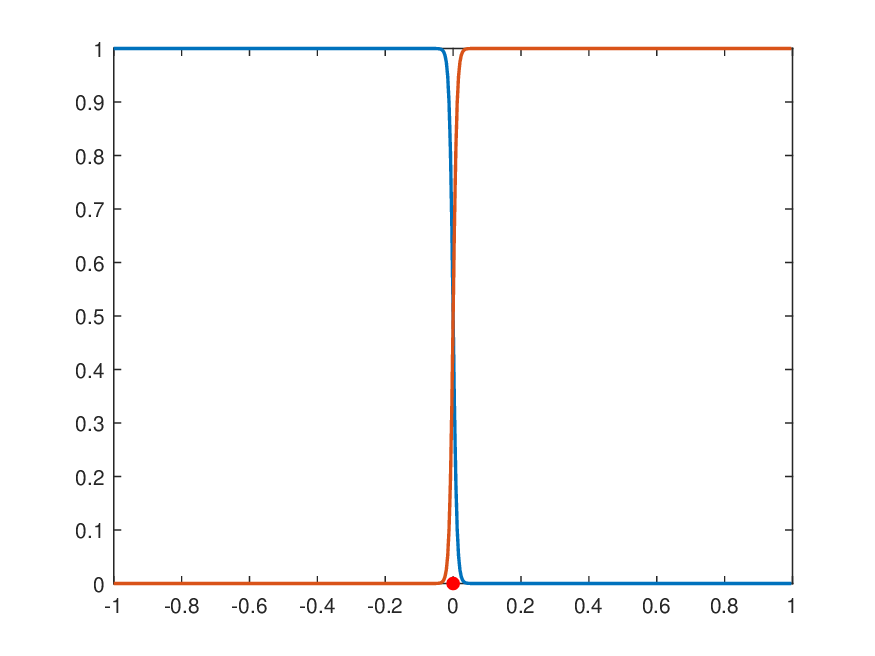}
    \caption{Plot of the multinode Shepard functions $W_{4,1}$, $W_{4,2}$ based on sets of $10$ points $\xi^{\iota}_{\kappa}$, $\iota=1,2$,  equispaced in the interval $(-1,-0.1)$ and $(0.1,1)$, respectively. The point $0 \in [-0.1,0.1]$ is assumed to be a discontinuity point. The numerical support of $W_{4,1}$ and $W_{4,2}$ is contained within, and remains close to, the interval $U_1=[-1,-0.1]$ and $U_{2}=[0.1,1]$.}
    \label{intersectionofWi}
\end{figure}

\begin{proposition} \label{proposition}
    For any $K\in\mathbb{N}$ and $k=1,\dots,K-1$, we have 
    \begin{equation*}
         F_{\mu}(\gamma-\beta,r_d,k,K)\leq F_{\mu}(\gamma-\beta,r_d,K-1,K).
    \end{equation*}
\end{proposition}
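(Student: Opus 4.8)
The plan is to reduce the inequality $F_\mu(\gamma-\beta,r_d,k,K)\le F_\mu(\gamma-\beta,r_d,K-1,K)$ to a comparison of the base quantities (the $\mu$-th roots), since $t\mapsto t^\mu$ is increasing on $[0,\infty)$ and both arguments are non-negative; so it suffices to show
\[
\frac{k!\,(K-k)!}{\displaystyle\prod_{\kappa=1}^K(K-k+\kappa)+(K+1)^K\Bigl(\tfrac{\gamma-\beta}{r_d}\Bigr)^K}
\ \le\
\frac{(K-1)!\,1!}{\displaystyle\prod_{\kappa=1}^K(1+\kappa)+(K+1)^K\Bigl(\tfrac{\gamma-\beta}{r_d}\Bigr)^K}.
\]
Write $c=(K+1)^K\bigl(\tfrac{\gamma-\beta}{r_d}\bigr)^K\ge 0$. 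Note that $\prod_{\kappa=1}^K(K-k+\kappa)=\frac{(2K-k)!}{(K-k)!}$ and $\prod_{\kappa=1}^K(1+\kappa)=\frac{(K+1)!}{1!}=(K+1)!$. So the claim becomes
\[
\frac{k!\,(K-k)!}{\tfrac{(2K-k)!}{(K-k)!}+c}\ \le\ \frac{(K-1)!}{(K+1)!+c}.
\]

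First I would handle this by a single monotonicity argument in $k$. Define, for $k\in\{1,\dots,K-1\}$,
\[
G(k)=\frac{k!\,(K-k)!}{\prod_{\kappa=1}^K(K-k+\kappa)+c},
\]
and show $G$ is non-increasing in $k$ on that range; evaluating at $k=K-1$ then gives the stated bound. To prove monotonicity it is enough to check $G(k+1)\le G(k)$, i.e.
\[
(k+1)!\,(K-k-1)!\Bigl(\textstyle\prod_{\kappa=1}^K(K-k+\kappa)+c\Bigr)\ \le\ k!\,(K-k)!\Bigl(\textstyle\prod_{\kappa=1}^K(K-k-1+\kappa)+c\Bigr).
\]
Dividing both sides by $k!\,(K-k-1)!$ reduces this to
\[
(k+1)\Bigl(\textstyle\prod_{\kappa=1}^K(K-k+\kappa)+c\Bigr)\ \le\ (K-k)\Bigl(\textstyle\prod_{\kappa=1}^K(K-k-1+\kappa)+c\Bigr).
\]
The two products differ by a ratio: $\prod_{\kappa=1}^K(K-k+\kappa)=\frac{2K-k}{K-k}\prod_{\kappa=1}^K(K-k-1+\kappa)$, so writing $P=\prod_{\kappa=1}^K(K-k-1+\kappa)>0$ the inequality becomes
\[
(k+1)\Bigl(\tfrac{2K-k}{K-k}P+c\Bigr)\ \le\ (K-k)\bigl(P+c\bigr),
\]
i.e. after multiplying through by $K-k>0$,
\[
(k+1)(2K-k)P+(k+1)(K-k)c\ \le\ (K-k)^2 P+(K-k)^2 c.
\]
Since $k+1\le K-k$ fails in general (it holds only for $k\le (K-1)/2$), the naive termwise comparison does not work, so the real content is to verify $(K-k)^2-(k+1)(2K-k)\ge 0$ and $(K-k)^2-(k+1)(K-k)\ge 0$ do not both hold — hence one must instead exploit the sign of $c$ carefully, splitting into the $c$-coefficient and $P$-coefficient and using $P$ versus $c$ relations; alternatively, one compares $G(k)$ directly with $G(K-1)$ rather than stepwise.

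The main obstacle is exactly this: the stepwise inequality $G(k+1)\le G(k)$ is \emph{false} for small $k$ when $c=0$ — e.g. $G(k)\big|_{c=0}=k!(K-k)!(K-k)!/(2K-k)!$, and $G(1)/G(2)=\frac{2!\,(2K-2)!}{(2K-1)!}\cdot\frac{(2K-2)!}{1!\,(2K-3)!}\cdot\dots$ need not exceed $1$. Thus a purely monotone argument will not go through; instead the proof must use that for the specific competitor $k=K-1$ the denominator product $\prod_{\kappa=1}^K(1+\kappa)=(K+1)!$ is the smallest among all $k$ while the numerator $(K-1)!$ is \emph{not} the largest, so one must balance the numerator loss against the denominator gain, and this balance only tips the right way because of the additive constant $c\ge 0$ and the explicit form $(K+1)^K(\ldots)^K$. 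So I expect the correct route is: fix the target $k=K-1$, and prove directly that for every $k\in\{1,\dots,K-1\}$,
\[
k!\,(K-k)!\,\Bigl[(K+1)!+c\Bigr]\ \le\ (K-1)!\,\Bigl[\tfrac{(2K-k)!}{(K-k)!}+c\Bigr],
\]
by separating the $c$-free part (a pure factorial inequality, provable by induction or by recognizing a ratio of binomial-type products that is $\le 1$) from the $c$-coefficient part (which reduces to $k!\,(K-k)!\le (K-1)!$, true since $k!\,(K-k)!\le 1!\,(K-1)!$ for $1\le k\le K-1$ by the log-convexity / unimodality of $j\mapsto j!(K-j)!$ with minimum at the endpoints $j=1,K-1$). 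Combining these two term-by-term comparisons yields the inequality, and raising to the $\mu$-th power finishes the proof.
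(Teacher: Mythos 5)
Your final decomposition --- compare the numerator $k!\,(K-k)!$ with $(K-1)!$ and the denominator product $\prod_{\kappa=1}^K(K-k+\kappa)$ with $(K+1)!$ separately --- is exactly the paper's proof, but your route to it contains a genuine error that makes you believe the argument is far more delicate than it is. You assert that ``the numerator $(K-1)!$ is \emph{not} the largest'' among the $k!\,(K-k)!$; this is false. Since $j!\,(K-j)!=K!/\binom{K}{j}$ and the binomial coefficient is minimized over $j\in\{1,\dots,K-1\}$ at the endpoints, the map $j\mapsto j!\,(K-j)!$ attains its \emph{maximum} at $j=1$ and $j=K-1$, so $k!\,(K-k)!\le (K-1)!$ for all $k=1,\dots,K-1$ (the paper proves this by writing the ratio as $\prod_{j=0}^{k-2}\tfrac{k-j}{K-1-j}\le 1$). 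Likewise $K-k\ge 1$ gives $\prod_{\kappa=1}^K(K-k+\kappa)\ge\prod_{\kappa=1}^K(1+\kappa)$, so the candidate $k=K-1$ simultaneously has the largest numerator and the smallest denominator; adding the common nonnegative constant $c$ to both denominators preserves their ordering, and the inequality follows term by term for every $c\ge 0$, including $c=0$. There is no ``balance'' to be struck, and the positivity of $c$ plays no role, contrary to your claim that the inequality ``only tips the right way because of the additive constant.''

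Two further slips. First, in your abandoned monotonicity attempt the direction is backwards: to conclude $G(k)\le G(K-1)$ for $k\le K-1$ from monotonicity you would need $G$ non-\emph{de}creasing, not non-increasing; in any case only the comparison with the endpoint $k=K-1$ is needed, and that is immediate from the two facts above. Second, your justification of $k!\,(K-k)!\le(K-1)!$ via ``unimodality \dots with minimum at the endpoints'' contradicts the very inequality you invoke --- the endpoints are where the maximum sits. Finally, the ``$c$-free part'' is left as an assertion (``provable by induction or by recognizing a ratio \dots''), whereas it is an immediate consequence of the two one-line comparisons; as written, the proposal does not actually complete the proof.
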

\begin{proof}
  We notice that, for any $K \in \mathbb{N}$, we have       
    \begin{equation*}
        k!(K-k)! \leq (K-1)!  \qquad k=1,\dots,K-1.
    \end{equation*}
    In fact
    \begin{eqnarray*}
        \frac{k!(K-k)!}{(K-1)!} = \dfrac{\prod\limits_{j=0}^{k-2}(k-j)}{\prod\limits_{j=0}^{k-2}(K-1-j)} = \prod\limits_{j=0}^{k-2}\dfrac{k-j}{K-1-j} \leq 1.
    \end{eqnarray*} 
    Therefore
    \begin{eqnarray*}
        F_{\mu}(\gamma-\beta,r_d,k,K)&=& \left(\frac{k!(K-k)!}{\prod\limits_{\kappa=1}^K \left(K-k+\kappa\right)+(K+1)^K\left(\frac{\gamma-\beta}{r_d}\right)^K}\right)^{\mu} \\
        &\leq& \left(\frac{(K-1)!}{\prod\limits_{\kappa=1}^K \left(1+\kappa\right)+(K+1)^K\left(\frac{\gamma-\beta}{r_d}\right)^K}\right)^{\mu}\\
        &=& F_{\mu}(\gamma-\beta,r_d,K-1,K).
    \end{eqnarray*}
\end{proof}
\begin{lemma}\label{reimanderhistopolationlemma}
    Let $\alpha,\beta \in \mathbb{R}$ such that $\alpha<\beta$. 
    Let $ f \in C^{\nu-1} \left([\alpha,\beta]\right)$ and suppose that $ f^{(\nu)} (x) $ exists at each point $ x \in (\alpha,\beta)$. Let $p^{\Sigma}_{\nu-1}\in\mathbb{P}_{\nu-1}$ be the histopolant of the function $f$ on the unisolvent set of segments $ \Sigma:= \{ \sigma_1, \ldots, \sigma_{\nu} \} $, that is 
    \begin{equation}\label{hyplemma}
       \int_{\sigma_i} p^{\Sigma}_{\nu-1}(x) d x=\int_{\sigma_i} f(x) d x, \qquad i=1,\dots,\nu.
    \end{equation} 
   Then, there exist $ \zeta_{1}, \ldots, \zeta_{\nu} \in [\alpha,\beta] $ and $\eta \in (\alpha,\beta)$ such that
    \begin{equation} \label{eq:remainderhistpolation}
        f(x) - p^{\Sigma}_{\nu-1}(x) = \frac{\prod\limits_{i=1}^{\nu} (x-\zeta_i)}{\nu!} f^{(\nu)} (\eta), \qquad x \in [\alpha,\beta].
    \end{equation}   
\end{lemma}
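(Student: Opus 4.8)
The plan is to reduce the statement to the classical Cauchy remainder for Lagrange interpolation. The key point is that, although the conditions~\eqref{hyplemma} are integral ones, they force the error $e:=f-p^{\Sigma}_{\nu-1}$ to have $\nu$ genuine zeros in $[\alpha,\beta]$; once these are in hand, $p^{\Sigma}_{\nu-1}$ is recognised as an ordinary polynomial interpolant and the familiar argument applies.

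First I would pass to the antiderivative $E(x):=\int_{\alpha}^{x}e(t)\,dt$, which is continuous on $[\alpha,\beta]$ and differentiable there with $E'=e$. Writing the chained segments used throughout this work as $\sigma_i=[t_{i-1},t_i]$ with $\alpha=t_0<t_1<\dots<t_\nu=\beta$, the identities~\eqref{hyplemma} become $E(t_i)-E(t_{i-1})=0$; since $E(t_0)=E(\alpha)=0$, this telescopes to $E(t_0)=E(t_1)=\dots=E(t_\nu)=0$. Applying Rolle's theorem on each $[t_{i-1},t_i]$ yields pairwise distinct points $\zeta_i\in(t_{i-1},t_i)\subset(\alpha,\beta)$, $i=1,\dots,\nu$, with $e(\zeta_i)=E'(\zeta_i)=0$.

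Next, from $e(\zeta_i)=0$ we get $p^{\Sigma}_{\nu-1}(\zeta_i)=f(\zeta_i)$ at the $\nu$ distinct nodes $\zeta_1,\dots,\zeta_\nu$, so $p^{\Sigma}_{\nu-1}$ is the Lagrange interpolant of $f$ at these nodes. Now fix $x\in[\alpha,\beta]$: if $x\in\{\zeta_1,\dots,\zeta_\nu\}$ both sides of~\eqref{eq:remainderhistpolation} vanish and any $\eta\in(\alpha,\beta)$ serves; otherwise, with $\omega(t):=\prod_{i=1}^{\nu}(t-\zeta_i)$, consider $\phi(t):=e(t)-\frac{e(x)}{\omega(x)}\,\omega(t)$, which vanishes at the $\nu+1$ distinct points $\zeta_1,\dots,\zeta_\nu,x$. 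Since $f\in C^{\nu-1}([\alpha,\beta])$ and $f^{(\nu)}$ exists on $(\alpha,\beta)$, the same holds for $\phi$ (the subtracted polynomial is smooth), so $\nu$ successive applications of Rolle's theorem produce $\eta=\eta(x)$ in the interior of the smallest interval containing $x,\zeta_1,\dots,\zeta_\nu$ --- hence $\eta\in(\alpha,\beta)$, because $\zeta_1>\alpha$ and $\zeta_\nu<\beta$ --- with $\phi^{(\nu)}(\eta)=0$. Using $(p^{\Sigma}_{\nu-1})^{(\nu)}\equiv0$ and $\omega^{(\nu)}\equiv\nu!$, this reads $f^{(\nu)}(\eta)-\frac{e(x)}{\omega(x)}\nu!=0$, which is precisely~\eqref{eq:remainderhistpolation}.

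The main obstacle is the very first step, converting the integral interpolation conditions into pointwise zeros of $e$. For chained segments --- the only configuration actually used to build the local histopolants of Section~\ref{sec2} --- the antiderivative identity does this cleanly, as above; for a general unisolvent segment system one has instead to show, using the non-degeneracy of the Gram matrix~\eqref{eq:gramian} (see also~\cite{Bruno:2023:PIO}), that $e$ must exhibit at least $\nu$ sign changes in $(\alpha,\beta)$, after which Steps~2 and~3 go through unchanged. Apart from that, the argument is a routine transcription of the classical Lagrange remainder; the only detail requiring care is that the intermediate point $\eta$ depends on $x$ and lies genuinely in the interior of $(\alpha,\beta)$, which is ensured by the $\zeta_i$ being interior points.
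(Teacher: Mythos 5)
Your proof is correct and ends up in the same place as the paper's: both reduce the integral conditions~\eqref{hyplemma} to the existence of $\nu$ points $\zeta_i$ at which $f$ and $p^{\Sigma}_{\nu-1}$ agree, and then run the classical Cauchy remainder argument (the paper simply cites Davis, Th.~3.1.1, for that second half, which you write out explicitly). The genuine difference is in how the agreement points are produced. The paper applies the mean value theorem for integrals to $\int_{\sigma_i}\bigl(f-p^{\Sigma}_{\nu-1}\bigr)\,dx=0$ separately on each segment, which needs no structural assumption on $\Sigma$; you instead pass to the antiderivative $E$ and use Rolle, which requires the segments to be chained so that the conditions telescope into $E(t_0)=\dots=E(t_\nu)=0$. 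You flag this restriction yourself, and since the lemma is only ever applied to the chained sets $\mathcal{S}_{\iota}$ of Section~\ref{sec2} (cf.\ Remark~\ref{remarkimp}), nothing is lost in the present context; your sketched fallback for general unisolvent systems (counting sign changes of $e$) is not carried out, but it is not needed here. What your route buys in exchange is worth noting: Rolle delivers $\zeta_i\in(t_{i-1},t_i)$, so the $\zeta_i$ are automatically pairwise distinct and interior to $(\alpha,\beta)$, which is exactly what the repeated application of Rolle to $\phi$ requires; the paper's one-line appeal to the integral mean value theorem only yields $\zeta_i\in\sigma_i$ and leaves implicit the possibility that consecutive segments contribute the same shared endpoint (resolvable, since a continuous function with zero mean on a segment either vanishes identically there or has an interior zero, but not stated). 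So your version is marginally more careful on the one point that actually matters for the second half of the argument.
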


\begin{proof}
    By using the hypothesis~\eqref{hyplemma} and the linearity of the integral, we get   
\begin{equation*}
\int_{\sigma_i} \left(f(x)-p^{\Sigma}_{\nu-1}(x)\right) d x = 0, \qquad i=1,\dots,\nu.  
\end{equation*}
By the mean value theorem, for each $ i = 1, \ldots, \nu $, there exists $ \zeta_i \in \sigma_i $ such that 
\begin{equation*}
f(\zeta_i)=p^{\Sigma}_{\nu-1}(\zeta_i), \qquad i=1,\dots,\nu.    
\end{equation*}
The proof is now identical to that of the classical polynomial interpolation, see for instance \cite[Th. $ 3.1.1 $]{Davis:1975:IAA}.
\end{proof}

Finally, we are ready to present an error bound for the operator~\eqref{finalop}.
\begin{theorem}\label{theoremerrorboundp}
    Let $k_{\max}=\max\limits_{\iota=1,\dots,M} k_{\iota}$ and $ f \in C^{k_{\max}}(\Omega)$. Then
\begin{equation}\label{errboundp}
     \left\lvert f(x)-\mathcal{Q}_{\mathcal{H},\mu}[f](x) \right\rvert  \leq  \max_{\substack{\iota=1,\dots,M \\ x\in U_{\iota}}}     \frac{\prod\limits_{i=1}^{k_{\iota}} \left \lvert x-\zeta^{\iota}_i\right\rvert}{k_{\iota}!} 
    \max\limits_{\eta \in \displaystyle{\cap_{\substack{\iota=1,\dots,M \\ x\in U_{\iota}}} U_{\iota}}}
    \left \lvert f^{(k_{\iota})}  (\eta) \right \rvert
\end{equation}
\begin{equation*}
    +  M
    \left
    (\left\lvert f({x})\right\rvert + \max_{\substack{\iota=1,\dots,M \\ x\notin U_{\iota}}}\left\lvert p_{\iota}(x)\right\rvert \right)  \max_{\substack{\iota=1,\dots,M \\ x\notin U_{\iota}}}F_{\mu}\left(a_{\iota+1}-b_{\iota},r_d, K-1,K\right).
\end{equation*}
     The second part of~\eqref{errboundp} can be made arbitrarily small by increasing $K$.
\end{theorem}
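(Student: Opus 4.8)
The plan is to build on the partition-of-unity identity~\eqref{partofuni}. Writing $f(x)=\sum_{\iota=1}^{M}W_{\mu,\iota}(x)f(x)$ and subtracting~\eqref{finalop}, one obtains the basic decomposition
\[
f(x)-\mathcal{Q}_{\mathcal{H},\mu}[f](x)=\sum_{\iota=1}^{M}W_{\mu,\iota}(x)\bigl(f(x)-p_{\iota}(x)\bigr).
\]
I would then split the index set $\{1,\dots,M\}$ into the \emph{near} indices $A(x)=\{\iota:x\in U_{\iota}\}$, which is nonempty by property (C), and the \emph{far} indices $B(x)=\{\iota:x\notin U_{\iota}\}$, and bound the two resulting partial sums separately. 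By~\eqref{nonnegprop} and~\eqref{partofuni} the weights appearing in each partial sum are non-negative with total mass at most $1$, so each partial sum is controlled by its worst summand (multiplied by $1$ for the near part, and by the number of far indices for the far part).

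For the near part, fix $\iota\in A(x)$. Since $U_{\iota}$ is contained in an interval of continuity of $f$ and $f\in C^{k_{\max}}(\Omega)\subseteq C^{k_{\iota}}$, the hypotheses of Lemma~\ref{reimanderhistopolationlemma} hold for the local histopolant $p_{\iota}\in\mathbb{P}_{k_{\iota}-1}$ on $\mathcal{S}_{\iota}$, so there exist $\zeta^{\iota}_{1},\dots,\zeta^{\iota}_{k_{\iota}},\eta_{\iota}\in U_{\iota}$ with
\[
f(x)-p_{\iota}(x)=\frac{\prod_{i=1}^{k_{\iota}}(x-\zeta^{\iota}_{i})}{k_{\iota}!}\,f^{(k_{\iota})}(\eta_{\iota}).
\]
Bounding $|f^{(k_{\iota})}(\eta_{\iota})|$ by its maximum over $\eta$ in the common interval $\bigcap_{\nu\in A(x)}U_{\nu}$ (which contains $x$) and then taking the maximum over $\iota\in A(x)$ reproduces the first summand of~\eqref{errboundp}.

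For the far part, fix $\iota\in B(x)$ and, using property (C) again, pick an index $\lambda\neq\iota$ with $x\in U_{\lambda}$. Retaining only the $\lambda$-term in the denominator of~\eqref{multinodefunction} gives
\[
0\le W_{\mu,\iota}(x)\le\frac{\prod_{\kappa=1}^{K}|x-\xi^{\iota}_{\kappa}|^{-\mu}}{\prod_{\kappa=1}^{K}|x-\xi^{\lambda}_{\kappa}|^{-\mu}}=\prod_{\kappa=1}^{K}\Bigl(\frac{|x-\xi^{\lambda}_{\kappa}|}{|x-\xi^{\iota}_{\kappa}|}\Bigr)^{\mu},
\]
which is exactly the quantity controlled by the telescoping argument inside the proof of Lemma~\ref{lemmaboundBmu}, now applied to the pair formed by the interval $U_{\lambda}$ containing $x$ and the interval $U_{\iota}$. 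That argument, together with Proposition~\ref{proposition}, yields $W_{\mu,\iota}(x)\le F_{\mu}(g_{\iota,\lambda},r_{d},K-1,K)$, where $g_{\iota,\lambda}$ is the separation between $U_{\iota}$ and $U_{\lambda}$; since $F_{\mu}(\cdot,r_{d},K-1,K)$ is non-increasing in its first (non-negative) argument and, under the minimal node-touching covering, the smallest separation between two covering intervals equals $a_{\iota+1}-b_{\iota}$, this is at most $\max_{\iota\in B(x)}F_{\mu}(a_{\iota+1}-b_{\iota},r_{d},K-1,K)$. Estimating $|f(x)-p_{\iota}(x)|\le|f(x)|+\max_{\iota\in B(x)}|p_{\iota}(x)|$ and using $\mathrm{card}(B(x))\le M$ then produces the second summand of~\eqref{errboundp}, and the triangle inequality combines the two parts.

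The asymptotic claim is the easy part: discarding the non-negative term $(K+1)^{K}\bigl((a_{\iota+1}-b_{\iota})/r_{d}\bigr)^{K}$ in the denominator of $F_{\mu}$ and using $\prod_{\kappa=1}^{K}(1+\kappa)=(K+1)!$ gives $F_{\mu}(a_{\iota+1}-b_{\iota},r_{d},K-1,K)\le\bigl((K-1)!/(K+1)!\bigr)^{\mu}=\bigl(K(K+1)\bigr)^{-\mu}\to0$ as $K\to\infty$, uniformly in $\iota$; since $M$, $|f(x)|$ and $\max_{\iota}|p_{\iota}(x)|$ do not depend on $K$, the whole second part of~\eqref{errboundp} vanishes in the limit. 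I expect the only genuinely delicate step to be the far-part estimate: transporting the two-interval telescoping bound of Lemma~\ref{lemmaboundBmu} to a point $x$ that may lie in an interval $U_{\lambda}$ several intervals away from $U_{\iota}$, tracking on which side of $U_{\iota}$ the point $x$ falls, and verifying that the worst-case separation is indeed the adjacent value $a_{\iota+1}-b_{\iota}$.
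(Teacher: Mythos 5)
Your proposal follows essentially the same route as the paper's proof: the partition-of-unity decomposition $f-\mathcal{Q}_{\mathcal{H},\mu}[f]=\sum_{\iota}W_{\mu,\iota}(f-p_{\iota})$, the split into indices with $x\in U_{\iota}$ versus $x\notin U_{\iota}$, Lemma~\ref{reimanderhistopolationlemma} for the near part, and Lemma~\ref{lemmaboundBmu} together with Proposition~\ref{proposition} for the far part, with the crude bound $\mathrm{card}(B(x))\le M$. Your explicit verification that $F_{\mu}(\cdot,r_d,K-1,K)\le (K(K+1))^{-\mu}\to 0$ is a welcome addition (the paper only asserts the decay), and your flagged concern about transporting the two-interval bound to non-adjacent $U_{\iota}$ is legitimate but is glossed over in the paper in exactly the same way.
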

\begin{proof}
We consider $x\in \Omega$. Then $ x \in \left[\xi^{\Bar{\ell}}_1,\xi^{n_{\Bar{\ell}}}_K\right]$,
for some $\Bar{\ell}\in \{0,\dots,m+1\}$. From~\eqref{finalop} and by employing both the properties~\eqref{partofuni} and~\eqref{nonnegprop}, it follows 

	\begin{eqnarray}	
	e({x})&=&\left\lvert f({x})- \mathcal{Q}_{\mathcal{H},\mu}[f](x)\right\rvert \notag \nonumber \\
	&=&\left\lvert \sum_{\iota=1}^M	W_{\mu ,\iota}\left( {x}\right)f({x})-\sum_{\iota=1}^M	W_{\mu,\iota}\left(x\right)p_{\iota}(x)\right\rvert \notag  \nonumber \\
	&\leq&\sum_{\iota=1}^M\left\lvert f({x})-p_{\iota}(x)\right\rvert W_{\mu ,\iota}\left(x\right)  \label{sumdifffpol} \\
    &\leq &  \sum_{\substack{\iota=1,\dots,M \\ x\in U_{\iota}}}\left\lvert f({x})-p_{\iota}(x)\right\rvert W_{\mu ,\iota}\left(x\right) +\sum_{\substack{\iota=1,\dots,M \\ x\notin U_{\iota}}} \left
    (\left\lvert f({x})\right\rvert + \left\lvert p_{\iota}(x)\right\rvert \right)
    W_{\mu ,\iota}\left(x\right) \nonumber \\
    &\le&   \max_{\substack{\iota=1,\dots,M \\ x\in U_{\iota}}}     \frac{\prod\limits_{i=1}^{k_{\iota}} \left \lvert x-\zeta^{\iota}_i\right\rvert}{k_{\iota}!} 
    \max\limits_{\eta \in \displaystyle{\cap_{\substack{\iota=1,\dots,M \\ x\in U_{\iota}}} U_{\iota}}}
    \left \lvert f^{(k_{\iota})}  (\eta) \right \rvert \nonumber\\ 
    &+&  \left
    (\left\lvert f({x})\right\rvert + \max_{\substack{\iota=1,\dots,M \\ x\notin U_{\iota}}}\left\lvert p_{\iota}(x)\right\rvert \right) \sum_{\substack{\iota=1,\dots,M \\ x\notin U_{\iota}}}  W_{\mu ,\iota}(x) \nonumber
    \\
    &\le&   \max_{\substack{\iota=1,\dots,M \\ x\in U_{\iota}}}     \frac{\prod\limits_{i=1}^{k_{\iota}} \left \lvert x-\zeta^{\iota}_i\right\rvert}{k_{\iota}!} 
    \max\limits_{\eta \in \displaystyle{\cap_{\substack{\iota=1,\dots,M \\ x\in U_{\iota}}} U_{\iota}}}
    \left \lvert f^{(k_{\iota})}  (\eta) \right \rvert \nonumber
    \end{eqnarray}
    \begin{eqnarray}
        \phantom{e(x)} &+& \left
    (\left\lvert f({x})\right\rvert + \max_{\substack{\iota=1,\dots,M \\ x\notin U_{\iota}}}\left\lvert p_{\iota}(x)\right\rvert \right) \sum_{\substack{\iota=1,\dots,M \\ x\notin U_{\iota}}}  F_{\mu}\left(a_{\iota+1}-b_{\iota},r_d, K-1,K\right) \nonumber \\
    &\leq& \max_{\substack{\iota=1,\dots,M \\ x\in U_{\iota}}}     \frac{\prod\limits_{i=1}^{k_{\iota}} \left \lvert x-\zeta^{\iota}_i\right\rvert}{k_{\iota}!} 
    \max\limits_{\eta \in \displaystyle{\cap_{\substack{\iota=1,\dots,M \\ x\in U_{\iota}}} U_{\iota}}}
    \left \lvert f^{(k_{\iota})}  (\eta) \right \rvert \nonumber \\ &+&  M\left
    (\left\lvert f({x})\right\rvert + \max_{\substack{\iota=1,\dots,M \\ x\notin U_{\iota}}}\left\lvert p_{\iota}(x)\right\rvert \right)  \max_{\substack{\iota=1,\dots,M \\ x\notin U_{\iota}}}F_{\mu}\left(a_{\iota+1}-b_{\iota},r_d, K-1,K\right), \nonumber
\end{eqnarray}
where in the last two inequalities we have used Lemma~\ref{lemmaboundBmu}. 
\end{proof}

As a consequence of the previous Theorem, the following result presents an error bound of the operator~\eqref{finalop} in the infinity norm on $\Omega$
\begin{equation*}
    \left \lVert f \right\rVert_{\infty,\Omega} = \sup\limits_{x \in \Omega} \left\lvert f(x) \right\rvert.
\end{equation*}
For this purpose, let us denote by:
\begin{itemize}
\item[-]  $\mathcal{R}_{r_d}(y)=\left\{{x}\in \mathbb{R}\, : \, y-r_d<x\le y+r_d\right\},$ for a fixed $y\in\left[a,b\right]$, the half-open interval of radius $r_d$ centered at $y$;
  \item[-] $ \mathcal{M}_{r_d}= \sup\limits_{y\in [a,b]}\mathrm{card}\left(\left\{C_{\iota}\, :\,  C_{\iota} \cap \mathcal{R}_{r_d}({{y}}) \neq \emptyset, \, \iota=1,\dots,M\right\}\right)$,
the supremum \\ number of the sets $C_{\iota}$ intersecting $\mathcal{R}_{r_d}(y)$;
\item[-] $\mathcal{M}_{f}=\max\limits_{\iota=1,\dots,M} \max\limits_{\ell=0,\dots,m+1} \max\limits_{y\in I_{\ell}}\left\lvert f^{\left(k_{\iota}\right)} (y) \right\rvert$,
\item[-] $h^{\min}_{\mathcal{S}^{\prime}_m}=\min\limits_{j=0,\dots,m} \left\{ \left\lvert s_{\sigma\left(j\right)}\right\rvert \right\}$.
\end{itemize}

\begin{remark} \label{remarkMrd}
We observe that $\mathcal{M}_{r_d}\leq 2$ if $y=\frac{a_{\iota}+b_{\iota}}{2}$. In all remaining cases, $\mathcal{M}_{r_d}\leq 4$.
\end{remark}

\begin{theorem} \label{errorbound}
   Let $ f \in C^{k_{\max}}(\Omega) $ and assume that $\mu>\frac{k_{\max}+1}{K}$. Then
    \begin{equation*}
        \left\lVert f-\mathcal{Q}_{\mathcal{H},\mu}[f] \right\rVert_{\infty,\Omega}  \leq \mathcal{E}_1+\mathcal{E}_2, 
    \end{equation*}
    where
        \begin{equation*}
        \mathcal{E}_1=\mathcal{M}_{f} \mathcal{M}_{r_d} \max\limits_{\iota=1,\dots,M}\frac{ (2r_d)^{k_{\iota}}}{k_{\iota}!} \left( 1+2\cdot2^{k_{\max}}+\sum\limits_{\theta=2}^{\infty} 2 (\theta+1)^{k_{\max}} 	\frac{1}{(\theta-\frac{1}{2})^{K\mu}}  \right) 
    \end{equation*}    
   tends to zero when $h^{\max}_{\mathcal{S}_n}\rightarrow0, \, n\rightarrow \infty $, equivalently $r_d \rightarrow 0$, and 
    \begin{equation*}
     \mathcal{E}_2 =  2M \left\lVert f \right\rVert_{\infty} \left(1 + \max_{\ell=0,\dots,m}\max_{\substack{\iota=1,\dots,M \\ U_{\iota}\cap I_{\ell} = \emptyset}} \max\limits_{y\in[a,b]}\lambda_{k_{\iota}}(y) \right)  F_{\mu}\left(h^{\min}_{\mathcal{S}^{\prime}_m},r_d,K-1,K\right)
    \end{equation*}
    can be made arbitrarily small by increasing $K$. 
\end{theorem}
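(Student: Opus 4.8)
\noindent The plan is to establish a pointwise estimate for $x\in\Omega$ by refining the splitting used in the proof of Theorem~\ref{theoremerrorboundp}, and then to take the supremum over $\Omega$. Fix $x\in\Omega$; then $x$ lies in the part of $\Omega$ contained in a single interval of continuity $I_{\bar\ell}$. Starting from inequality~\eqref{sumdifffpol}, $e(x)\le\sum_{\iota=1}^M\left\lvert f(x)-p_\iota(x)\right\rvert W_{\mu,\iota}(x)$, I would split $\{1,\dots,M\}$ into $\mathcal{A}=\{\iota:U_\iota\subset I_{\bar\ell}\}$ and $\mathcal{B}=\{\iota:U_\iota\cap I_{\bar\ell}=\emptyset\}$; this is a genuine dichotomy, since by construction each $U_\iota$ is contained in exactly one interval of continuity. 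The $\mathcal{A}$--contribution will produce $\mathcal{E}_1$ and the $\mathcal{B}$--contribution will produce $\mathcal{E}_2$.

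For $\iota\in\mathcal{A}$ the function $f$ is $C^{k_{\max}}$ on $I_{\bar\ell}$, which contains both $x$ and $U_\iota$, so Lemma~\ref{reimanderhistopolationlemma} applied on $U_\iota$ gives $\left\lvert f(x)-p_\iota(x)\right\rvert\le\frac{\prod_{i=1}^{k_\iota}\lvert x-\zeta^\iota_i\rvert}{k_\iota!}\,\mathcal{M}_f$. I would then group the indices of $\mathcal{A}$ into shells indexed by the number $\theta\in\mathbb{N}_0$ of interval-lengths $r_d$ separating $U_\iota$ from $x$. For $\theta\in\{0,1\}$ the intervals $U_\iota$ touch or contain the one holding $x$, so one only uses $W_{\mu,\iota}(x)\le1$ together with $\prod_{i=1}^{k_\iota}\lvert x-\zeta^\iota_i\rvert\le\bigl((\theta+1)r_d\bigr)^{k_\iota}\le(2r_d)^{k_\iota}(\theta+1)^{k_{\max}}$; for $\theta\ge2$ one bounds $W_{\mu,\iota}(x)$ from above, via its definition~\eqref{multinodefunction}, by the ratio of its numerator to the single denominator term coming from a dominant competitor $C_\lambda$ (the set whose interval contains, or is closest to, $x$), which places us in the two-interval situation of Lemma~\ref{lemmaboundBmu} with a gap of length at least $(\theta-\tfrac12)r_d$; combined with Proposition~\ref{proposition} this yields $W_{\mu,\iota}(x)\le F_\mu\bigl((\theta-\tfrac12)r_d,r_d,K-1,K\bigr)\le(\theta-\tfrac12)^{-K\mu}$. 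At most $\mathcal{M}_{r_d}$ indices of $\mathcal{A}$ lie in each shell on each side of $x$, which furnishes the outer factor $\mathcal{M}_{r_d}$ and the inner $2$'s; assembling $\mathcal{M}_f\,\mathcal{M}_{r_d}\max_\iota\frac{(2r_d)^{k_\iota}}{k_\iota!}\bigl(1+2\cdot2^{k_{\max}}+\sum_{\theta\ge2}2(\theta+1)^{k_{\max}}(\theta-\tfrac12)^{-K\mu}\bigr)$ reproduces $\mathcal{E}_1$, and the series converges because $K\mu>k_{\max}+1$, which is the standing hypothesis.

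For $\iota\in\mathcal{B}$ the function $f$ may jump between $x$ and $U_\iota$, so the remainder formula is unavailable; instead I would write $\left\lvert f(x)-p_\iota(x)\right\rvert\le\lvert f(x)\rvert+\lvert p_\iota(x)\rvert$ and use that every datum $\mu_i(f)/\lvert s_i\rvert$ is a mean value of $f$, whence $\lvert p_\iota(x)\rvert\le\lambda_{k_\iota}(x)\,\lVert f\rVert_\infty$, with $\lambda_{k_\iota}$ the Lebesgue function of histopolation on $\mathcal{S}_\iota$ evaluated outside $U_\iota$; after elementary estimates this is at most $2\lVert f\rVert_\infty\bigl(1+\max_{y}\lambda_{k_\iota}(y)\bigr)$. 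Since $x$ and $U_\iota$ sit in distinct intervals of continuity, they are separated by at least one segment of $\mathcal{S}^{\prime}_m$, so the gap between $C_\iota$ and a competitor $C_\lambda$ near $x$ is at least $h^{\min}_{\mathcal{S}^{\prime}_m}$; using the monotonicity of $F_\mu$ in its first argument together with Lemma~\ref{lemmaboundBmu} and Proposition~\ref{proposition}, $W_{\mu,\iota}(x)\le F_\mu\bigl(h^{\min}_{\mathcal{S}^{\prime}_m},r_d,K-1,K\bigr)$. Summing over the at most $M$ indices of $\mathcal{B}$ yields $\mathcal{E}_2$. All constants appearing in the two bounds are independent of $x$, so passing to the supremum over $\Omega$ gives $\lVert f-\mathcal{Q}_{\mathcal{H},\mu}[f]\rVert_{\infty,\Omega}\le\mathcal{E}_1+\mathcal{E}_2$.

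It then remains to verify the two limit claims. For equispaced nodes each $U_\iota$ contains exactly $d+1$ segments, so $k_\iota\equiv d+1$ and $k_{\max}=d+1$ stays fixed as $n$ grows; hence $\mathcal{M}_{r_d}\le4$, $\mathcal{M}_f$ is controlled by the $C^{k_{\max}}$--norm of $f$, the bracketed series is a fixed finite number, and $\max_\iota\frac{(2r_d)^{k_\iota}}{k_\iota!}=\frac{(2r_d)^{d+1}}{(d+1)!}\to0$ as $r_d=(d+1)\frac{b-a}{n}\to0$, so $\mathcal{E}_1\to0$. For fixed $n$ (hence fixed $M$) and $K\to\infty$ one has $F_\mu\bigl(h^{\min}_{\mathcal{S}^{\prime}_m},r_d,K-1,K\bigr)=\bigl(\tfrac{(K-1)!}{(K+1)!+(K+1)^K(h^{\min}_{\mathcal{S}^{\prime}_m}/r_d)^K}\bigr)^\mu\le\bigl(\tfrac{(K-1)!}{(K+1)!}\bigr)^\mu=\bigl(K(K+1)\bigr)^{-\mu}\to0$, while $\lambda_{d+1}$ stays bounded, so $\mathcal{E}_2\to0$. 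The main obstacle is the bookkeeping behind $\mathcal{E}_1$: Lemma~\ref{lemmaboundBmu} is stated only for two consecutive equal intervals, so the delicate points are to single out the correct dominant competitor $C_\lambda$ for each $\iota\in\mathcal{A}$, to translate the geometric description ``$\theta$ interval-lengths away'' into the exact argument fed to $F_\mu$, and to pin down the per-shell multiplicity and the polynomial factor $(\theta+1)^{k_{\max}}$ so that the sum collapses to the closed form appearing in $\mathcal{E}_1$; once this is set up, the series convergence and the two limits follow routinely from $\mu>\frac{k_{\max}+1}{K}$.
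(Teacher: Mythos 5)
Your proposal is correct and follows essentially the same route as the paper: the same split of the sum over $\iota$ according to whether $U_{\iota}$ meets the interval of continuity containing $x$, the same shell decomposition into annuli of width $2r_d$ with the remainder formula of Lemma~\ref{reimanderhistopolationlemma} and the bound $W_{\mu,\iota}(x)\le(\theta-\tfrac12)^{-K\mu}$ for the $\mathcal{E}_1$ part, and the same Lebesgue-constant plus $F_{\mu}$ estimate for the $\mathcal{E}_2$ part. The only cosmetic difference is that for $\theta\ge 2$ the paper bounds $W_{\mu,\iota}$ directly by the ratio $\prod_{\kappa}\lvert x-\xi^{\iota}_{\kappa}\rvert^{-\mu}/\prod_{\kappa}\lvert x-\xi^{\iota_{\min}}_{\kappa}\rvert^{-\mu}$ rather than routing through $F_{\mu}$, but the resulting estimate is identical.
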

\begin{proof}
We consider $x\in \Omega$. Then $ x \in \left[\xi^{\Bar{\ell}}_1,\xi^{n_{\Bar{\ell}}}_K\right]$,
for some $\Bar{\ell}\in \{0,\dots,m+1\}$. The following subsets $\{A_{\theta}\}_{\theta=0}^{\Theta}$  
      represent the minimal set that covers the interval $\left[\xi^{\Bar{\ell}}_1,\xi^{n_{\Bar{\ell}}}_K\right]$ 
      \begin{equation*}
    A_0=\mathcal{R}_{r_d}(x):=\left\{{y}\in \mathbb{R}\, : \, x-r_d<y\le x+r_d
         \right\}
\end{equation*}
and
      \begin{equation*}
A_{\theta}=\mathcal{R}_{r_d}({x}- 2 r_d\theta)\cup \mathcal{R}_{r_d}({x}+ 2 r_d \theta), \qquad \theta=1,\dots, \Theta.
\end{equation*} 
Obviously 
\begin{equation*}
    \left[\xi^{\Bar{\ell}}_1,\xi^{n_{\Bar{\ell}}}_K\right] \subset \bigcup_{\theta=0}^{\Theta}A_{\theta}.
\end{equation*}
As $A_{\theta}$ consists of two identical copies of $\mathcal{R}_{r_d}(x)$ for $\theta = 1, 2, \dots, \Theta$, the number of subsets $C_{\Bar{\ell},j}$, $j = 0, \dots, n_{\Bar{\ell}}$, with at least one vertex in $A_{\theta}$, $\theta = 1, 2, \dots, \Theta$, is bounded by $2 \mathcal{M}_{r_d}$.
By~\eqref{sumdifffpol}, it results 
\begin{eqnarray}\label{err1}
    e({x}) &\leq& \sum_{\iota=1}^M\left\lvert f({x})-p_{\iota}(x)\right\rvert W_{\mu ,\iota}\left(x\right) \notag \\ &=&  \sum_{\substack{\iota=1,\dots,M \\ U_{\iota}\cap I_{\Bar{\ell}} \neq \emptyset}}\left\lvert f({x})-p_{\iota}(x)\right\rvert W_{\mu ,\iota}\left(x\right) + 
      \sum_{\substack{\iota=1,\dots,M \\ U_{\iota}\cap I_{\Bar{\ell}} = \emptyset}}\left\lvert f({x})-p_{\iota}(x)\right\rvert W_{\mu ,\iota}\left(x\right) \\ \label{e1,2}
      &=:& e_1(x)+e_2(x).
\end{eqnarray}
To begin, we focus on bounding $e_1$.
Let $V_0$ denote the set of $C_{\iota}$ containing at least one node in $A_0$. Similarly, for $\theta = 1, \dots, \Theta$, let $V_{\theta}$ denote the set of $C_{\iota}$ that contain at least one node in $A_{\theta}$ but no nodes in $A_{\theta-1}$. As established, we get  
\begin{equation*}
    \bigcup_{\theta=0}^\Theta V_{\theta}=\bigcup_{\iota=1}^M C_{\iota} \quad \text{ and } \quad \bigcap_{\theta=0}^\Theta V_{\theta}=\emptyset.
\end{equation*} 
Under the assumption
\begin{equation*}
x\in\bigcup\limits_{\substack{\iota=1,\dots,M \\ U_{\iota} \cap I_{\Bar{\ell}} \neq \emptyset}} C_{\iota},
\end{equation*} 
 by the Kronecker delta property of the multinode Shepard functions~\cite{DellAccio:2018:ROF}, from equation~\eqref{err1} and since $f \in C^{k_{\max}}(\Omega)$, leveraging Lemma~\ref{reimanderhistopolationlemma}, we get
\begin{equation}\label{err1B1}
    e_1(x)\leq \left\lvert f({x})-p_{\iota} (f,x)\right\rvert\leq   \frac{\prod\limits_{i=1}^{k_{\iota}} \left\lvert x-\zeta^{\iota}_i\right\rvert}{k_{\iota}!} f^{(k_{\iota})} (\eta_{\iota}),\qquad x \in \mathbb{R},
\end{equation}
where $\min \left\{\zeta^{\iota}_1, \dots, \zeta^{\iota}_{k_{\iota}},x  \right\} \leq \eta_{\iota} \leq \max \left\{\zeta^{\iota}_1, \dots, \zeta^{\iota}_{k_{\iota}},x  \right\}$.
In the case of
\begin{equation*}
    {x}\notin \bigcup\limits_{\substack{\iota=1,\dots,M \\ U_{\iota} \cap I_{\bar{\ell}} \neq \emptyset}} C_{\iota},
\end{equation*}
 we consider $\iota_{\min}\in \{1,\dots, M\}$ so that 
\begin{equation*}
\prod\limits_{\kappa=1}^{K}\left\lvert {x}-{\xi}^{\iota_{\min}}_{\kappa}\right\rvert= \underset{\iota=1,\dots,M}{\min} 	\prod\limits_{\kappa=1}^{K}\left\vert {x}-{\xi}^{\iota}_{\kappa}\right\vert.   
\end{equation*}
We notice that
\begin{equation}\label{ineqinR0}
\prod\limits_{\kappa=1}^{K}\left\lvert{x}-{\xi}^{\iota}_{\kappa}\right\rvert\leq  (2r_d)^{K},    
\end{equation}
if $C_{\iota} \in V_0$ whereas 
\begin{equation}\label{ineqinRthe}
((2\theta-1) r_d)^{K} \leq \prod\limits_{\kappa=1}^{K}\left\vert {x}-{\xi}^{\iota}_{\kappa}\right\vert
 \leq \left((2\theta+2) r_d \right)^{K},    
\end{equation}
if $C_{\iota} \in V_{\theta}$, $\theta=1,\dots, \Theta$.
Finally, we get
\begin{eqnarray*}
		W_{\mu ,\iota}(x)&=&\dfrac{\prod\limits_{\kappa=1}^{K}\left\vert 
{x}-{\xi}^{\iota}_{\kappa}\right\vert ^{-\mu }}{\sum\limits_{\tau=1}^{M}%
\prod\limits_{\kappa=1}^{K}\left\vert {x}-{\xi}^{\tau}_{\kappa}\right\vert
^{-\mu}}\leq \min \left\{  1,\prod\limits_{\kappa=1}^{K}\frac{\left\lvert{x}-{\xi}^{\iota}_{\kappa}\right\rvert^{-\mu}}{\left\lvert {x}-{\xi}^{\iota_{\min}}_{\kappa}\right\rvert^{-\mu}}\right\} \\
&\leq& \left\{ \begin{array}{ll}
			1, & C_{\iota} \in V_{\theta}, \, \theta=0,1,\\
			\frac{1}{(\theta-\frac{1}{2})^{K\mu}},& C_{\iota} \in V_{\theta}, \theta \geq 2.
		\end{array}\right. 
\end{eqnarray*} 
We highlight that, given $C_{\iota} \subset U_{\iota}$, the inequalities~\eqref{ineqinR0} and~\eqref{ineqinRthe} remain valid for any selected set $N_{\iota} \subset U_{\iota}$, where $\iota = 1, \dots, M$.
From~\eqref{err1},~\eqref{err1B1}, it directly results
\begin{eqnarray*}
    	e_1(x) & \leq& \mathcal{M}_{f} \left( \sum\limits_{C_{\iota} \in V_{0}\cup V_{1}}   \frac{\prod\limits_{i=1}^{k_{\iota}} \left\lvert x-\zeta^{\iota}_i \right \rvert}{k_{\iota}!} +\sum\limits_{\theta=2}^{\Theta} \sum\limits_{C_{\iota} \in V_{\theta}}\frac{\prod\limits_{i=1}^{k_{\iota}} \left\lvert x-\zeta^{\iota}_i \right \rvert}{k_{\iota}!} 	\frac{1}{(\theta-\frac{1}{2})^{K\mu}}  \right) \\
     & \leq& \mathcal{M}_{f} \left( \sum\limits_{C_{\iota} \in V_{0}}\frac{ (2r_d)^{k_{\iota}}}{k_{\iota}!}+\sum\limits_{C_{\iota} \in V_{1}}\frac{ (4r_d)^{k_{\iota}}}{k_{\iota}!}
     +\sum\limits_{\theta=2}^{\Theta} \sum\limits_{C_{\iota} \in V_{\theta}}\frac{\left((2\theta+2) r_d \right)^{k_{\iota}}}{k_{\iota}!} 	\frac{1}{(\theta-\frac{1}{2})^{K\mu}} \right) \\
      & \leq& \mathcal{M}_{f} \mathcal{M}_{r_d} \max\limits_{\iota=1,\dots,M}\frac{ (2r_d)^{k_{\iota}}}{k_{\iota}!} \left( 1+2\cdot2^{k_{\max}}+\sum\limits_{\theta=2}^{\Theta} 2 (\theta+1)^{k_{\max}} 	\frac{1}{(\theta-\frac{1}{2})^{K\mu}}  \right) 
     \end{eqnarray*}
For $\mu > \frac{k_{\max}+1}{K}$ we have the convergence of $\sum\limits_{\theta=2}^{\infty} \frac{(\theta+1)^{k_{\max}}}{(\theta-\frac{1}{2})^{K\mu}}$.
To bound $e_2(x)$, defined in~\eqref{e1,2}, note that since $x \in \left[\xi^{\Bar{\ell}}_1,\xi^{n_{\Bar{\ell}}}_K\right]$, there exist $i = \Bar{\ell}, \dots, n_{\Bar{\ell}}$ and $k = 1, \dots, K-1$ such that $x \in \left[\xi^i_k, \xi^i_{k+1}\right]$.
To estimate the multinode Shepard functions $W_{\mu,\iota}$, we apply Lemma~\ref{lemmaboundBmu} and Proposition~\ref{proposition} twice.
 In more detail, for $\iota = 1, \dots, M$ such that $U_{\iota} \cap I_{\Bar{\ell}} = \emptyset$, we use the following substitutions 
\begin{equation*}
  \begin{cases}
\alpha \to a_{\iota-1}, \quad \beta \to b_{\iota-1}, \quad \gamma \to a_{\iota}, \quad \delta \to b_{\iota}, 
& \text{ for } \xi^{\iota}_K < \xi^i_k, \\
\alpha \to a_{\iota}, \quad \beta \to b_{\iota}, \quad \gamma \to a_{\iota+1}, \quad \delta \to b_{\iota+1},
& \text{ for }\xi^{\iota}_1 > \xi^i_{k+1}.
\end{cases}
\end{equation*}
 Consequently, we have 
\begin{eqnarray*}
    e_2(x) &=& \sum_{\substack{\iota=1,\dots,M \\ U_{\iota}\cap I_{\Bar{\ell}} = \emptyset}}\left\lvert f({x})-p_{\iota}(x)\right\rvert W_{\mu ,\iota}\left(x\right) \\ 
    &\leq& \sum_{\substack{\iota=1,\dots,M \\ U_{\iota}\cap I_{\Bar{\ell}} = \emptyset}} \left
    (\left\lvert f({x})\right\rvert + \left\lvert p_{\iota}(x)\right\rvert \right)
    W_{\mu ,\iota}\left(x\right) \\
    &\leq& \left
    (\left\lvert f({x})\right\rvert + \max_{\substack{\iota=1,\dots,M \\ U_{\iota}\cap I_{\Bar{\ell}} = \emptyset}}\left\lvert p_{\iota}(x)\right\rvert \right) \sum_{\substack{\iota=1,\dots,M \\ U_{\iota}\cap I_{\Bar{\ell}} = \emptyset}}  W_{\mu ,\iota}(x) \\
    &\leq& \left
    (\left\lVert f \right\rVert_{\infty} + \max_{\substack{\iota=1,\dots,M \\ U_{\iota}\cap I_{\Bar{\ell}} = \emptyset}}\left\lVert p_{\iota}(\cdot)\right\rVert_{\infty} \right) \times\\
    && \left( c_1 \cdot F_{\mu}\left(a_{\iota}-b_{\iota-1},r_d,K-k,K\right)  + c_2 \cdot F_{\mu}\left(a_{\iota+1}-b_{\iota},r_d,k,K\right)\right) \\
    &\leq& 
    M \left\lVert f \right\rVert_{\infty} \left(1 + \max_{\substack{\iota=1,\dots,M \\ U_{\iota}\cap I_{\Bar{\ell}} = \emptyset}} \max\limits_{y\in[a,b]}\lambda_{k_{\iota}}(y) \right) \times 
    \\
    &&
\left(F_{\mu}\left(h^{\min}_{\mathcal{S}^{\prime}_m},r_d,1,K\right)+F_{\mu}\left(h^{\min}_{\mathcal{S}^{\prime}_m},r_d,K-1,K\right)\right)\\
    &\leq& 
    2M \left\lVert f \right\rVert_{\infty} \left(1 + \max_{\ell=0,\dots,m}\max_{\substack{\iota=1,\dots,M \\ U_{\iota}\cap I_{\ell} = \emptyset}} \max\limits_{y\in[a,b]}\lambda_{k_{\iota}}(y) \right)  F_{\mu}\left(h^{\min}_{\mathcal{S}^{\prime}_m},r_d,K-1,K\right), 
\end{eqnarray*}
where 
$$c_1=\mathrm{card}\left(\left\{\iota=1,\dots,M : U_{\iota}\cap I_{\Bar{\ell}} =\emptyset \wedge \xi^{\iota}_K < \xi^i_k  \right\}\right),$$

$$c_2=\mathrm{card}\left(\left\{\iota=1,\dots,M : U_{\iota}\cap I_{\Bar{\ell}} =\emptyset \wedge \xi^{\iota}_1 > \xi^i_{k+1}  \right\}\right)$$
 and $\max\limits_{y\in[a,b]}\lambda_{k_{\iota}}(y)$ is the Lebesgue constant relative to the Lebesgue function $\lambda_{k_{\iota}}(y)$ for Lagrange interpolation in $N_{\iota}$.
\end{proof}

\begin{corollary}
    Under the same assumptions and settings of the Theorem~\ref{errorbound}, we get 
    \begin{equation*}
        \left\lvert \int_{s_i}f(x)dx - \int_{s_i} \mathcal{Q}_{\mathcal{H},\mu}[f](x)dx \right\rvert \leq \left(\mathcal{E}_1+\mathcal{E}_2 \right) h^{\max}_{\mathcal{S}_n}.
    \end{equation*}
\end{corollary}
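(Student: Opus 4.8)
The plan is to reduce the statement to the uniform error estimate already obtained in Theorem~\ref{errorbound}. First I would invoke the linearity of the integral to write
\[
\int_{s_i} f(x)\,dx - \int_{s_i} \mathcal{Q}_{\mathcal{H},\mu}[f](x)\,dx = \int_{s_i}\bigl(f(x) - \mathcal{Q}_{\mathcal{H},\mu}[f](x)\bigr)\,dx,
\]
and then apply the triangle inequality for integrals, $\left\lvert \int_{s_i} g \right\rvert \le \int_{s_i}\lvert g\rvert$, with $g = f - \mathcal{Q}_{\mathcal{H},\mu}[f]$.

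Next I would bound the integrand pointwise by the supremum norm on $\Omega$: for every $x\in s_i$ one has $\bigl\lvert f(x) - \mathcal{Q}_{\mathcal{H},\mu}[f](x)\bigr\rvert \le \left\lVert f - \mathcal{Q}_{\mathcal{H},\mu}[f]\right\rVert_{\infty,\Omega}$, which by Theorem~\ref{errorbound} is at most $\mathcal{E}_1 + \mathcal{E}_2$. Integrating this constant bound over $s_i$ produces a factor equal to the length $\lvert s_i\rvert$, and by the definition~\eqref{hsn} of $h^{\max}_{\mathcal{S}_n}$ we have $\lvert s_i\rvert \le h^{\max}_{\mathcal{S}_n}$, which yields the claimed estimate $\left\lvert \int_{s_i}f\,dx - \int_{s_i}\mathcal{Q}_{\mathcal{H},\mu}[f]\,dx\right\rvert \le (\mathcal{E}_1+\mathcal{E}_2)\,h^{\max}_{\mathcal{S}_n}$.

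The only point requiring care — and the main, rather mild, obstacle — is ensuring that the uniform bound of Theorem~\ref{errorbound} legitimately applies on the segment $s_i$ at hand: that theorem controls the error only on $\Omega = \bigcup_{\ell}[\xi^{\ell}_1,\xi^{n_\ell}_K]$, which is a proper subset of $\bigcup_{\ell} I_{\ell}$, and the quantity $h^{\max}_{\mathcal{S}_n}$ is defined in~\eqref{hsn} using only the segments $s_i\notin\mathcal{S}^{\prime}_m$. Hence the estimate is to be read for segments $s_i\subset\Omega$ with $s_i\notin\mathcal{S}^{\prime}_m$; for such segments the chain of inequalities above is immediate, and no further argument is needed.
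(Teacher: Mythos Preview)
Your proposal is correct and follows essentially the same approach as the paper: combine the integrals by linearity, apply the triangle inequality for integrals, bound the integrand pointwise via Theorem~\ref{errorbound}, and then use $\lvert s_i\rvert\le h^{\max}_{\mathcal{S}_n}$. Your extra paragraph flagging the restriction to segments $s_i\subset\Omega$ with $s_i\notin\mathcal{S}^{\prime}_m$ is a nice point of care that the paper's proof leaves implicit.
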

\begin{proof}
By easy computations, we get
\begin{equation*}
\begin{aligned}
    \left\lvert \int_{s_i}f(x)dx - \int_{s_i} \mathcal{Q}_{\mathcal{H},\mu}[f](x)dx \right\rvert &= \left\lvert \int_{s_i}\left(f(x)dx - \mathcal{Q}_{\mathcal{H},\mu}[f](x)\right)dx  \right\rvert \\
    \leq  \int_{s_i} \left\lvert f(x)dx - \mathcal{Q}_{\mathcal{H},\mu}[f](x) \right\rvert dx &\leq \left(\mathcal{E}_1+\mathcal{E}_2 \right) h^{\max}_{\mathcal{S}_n}.
\end{aligned}
  \end{equation*}
\end{proof}
 \section{Numerical results}\label{sec4}
In the following, we present several numerical tests that prove the effectiveness of the introduced quasi-histopolation method. In all experiments, $X_{n}$ is considered the set of $n+1$ equispaced nodes in $[-1,1]$ and we use the Chebyshev polynomial of the first kind given by
\begin{equation*}
    T_0(x)=1, \quad T_1(x)=x, \quad T_{k+1}(x)=2x T_k(x)-T_{k-1}(x), \quad k\ge 2,
\end{equation*}
as basis polynomials for the respective spaces. In particular, each histopolation polynomial $p_{\iota}(x)$ is based on the dataset $\mathcal{S}_{\iota}$, for $\iota = 1, \dots, M$, and is expressed with respect to the Chebyshev polynomial basis of the first kind, scaled on the interval defined by the minimum left endpoint and the maximum right endpoint of all segments in $\mathcal{S}_{\iota}$.
 
\subsection{Numerical test $1$}
In the first numerical test, we use the following smooth functions in $\left[-1,1\right]$    
\begin{equation*}
    f_1(x)=\frac{1}{1+25x^2},
    \quad
    f_2(x)=\cos\left(5\pi x\right),
\quad
    f_3(x)=\frac{1}{x-1.5},
    \quad
    f_4(x)=\cos\left(50\pi x\right),
\end{equation*}
where $f_1$ is the famous Runge function scaled in the interval $[-1,1]$, $f_2$ is an oscillatory function with a relatively low frequency, $f_3$ is a rational function which has a singularity at $x=1.5$ and $f_4$ is an oscillatory function with a higher frequency compared to $f_2$. 
All these functions are often used for comparing numerical behaviors or evaluating approximation and interpolation methods, see, f.e.,~\cite{Bruno:2024:PHM}. 
In Figs.~\ref{Runge}-\ref{f3} are displayed the plots of the functions $f_i$, $i=1,2,3$, and the trend of the approximation error in the $L_1$-norm
\begin{equation*}
    e_1\left[f\right] = \int_{-1}^{1} \left\lvert f(x)-\mathcal{Q}_{\mathcal{H},\mu}[f](x)\right\rvert dx,
\end{equation*}
in the maximum norm 
\begin{equation}\label{erroremassimo}
    e_{\max}\left[f\right] = \max_{x \in X_{n_e}} \left\lvert f(x)-\mathcal{Q}_{\mathcal{H},\mu}[f](x)\right\rvert,
\end{equation}
and the trend of the mean approximation error
\begin{equation*}
    e_{\mathrm{mean}}\left[f\right] = \frac{1}{n_e} \sum\limits_{j=1}^{n_e}  \left\lvert f(x_j)-\mathcal{Q}_{\mathcal{H},\mu}[f](x_j)\right\rvert,
\end{equation*}
computed on the set of segments $\mathcal{S}_n$ based on different sets of equispaced nodes $X_n$, $n=100:100:1000$ by varying $d=3:3:9$. Specifically, we compute $e_{\max}$ and $e_{\mathrm{mean}}$ relative to a set of $n_e=10007$ equispaced nodes. In Fig.~\ref{cos50x}, for the function $f_4$ we use instead $n=200:200:2000$ and $d=3:3:15$.

\begin{figure}
    \centering
    \includegraphics[width=0.49\linewidth]{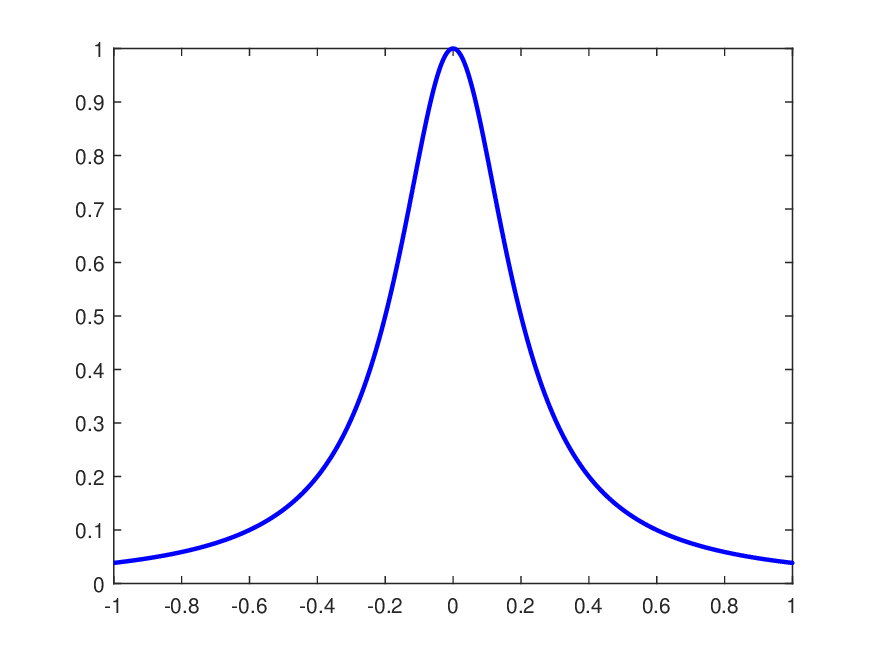}
    \includegraphics[width=0.49\linewidth]{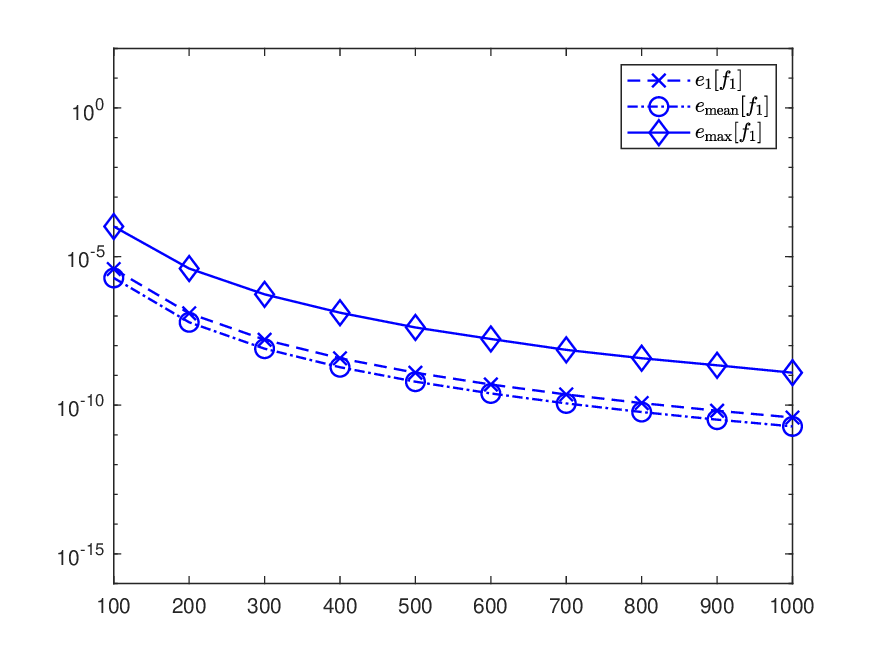}

    \includegraphics[width=0.49\linewidth]{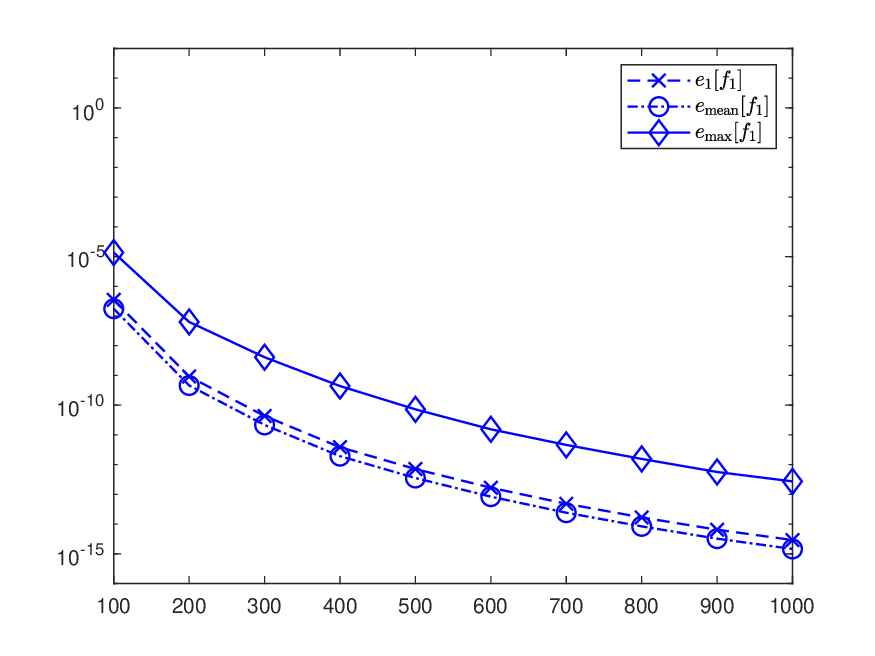}
    \includegraphics[width=0.49\linewidth]{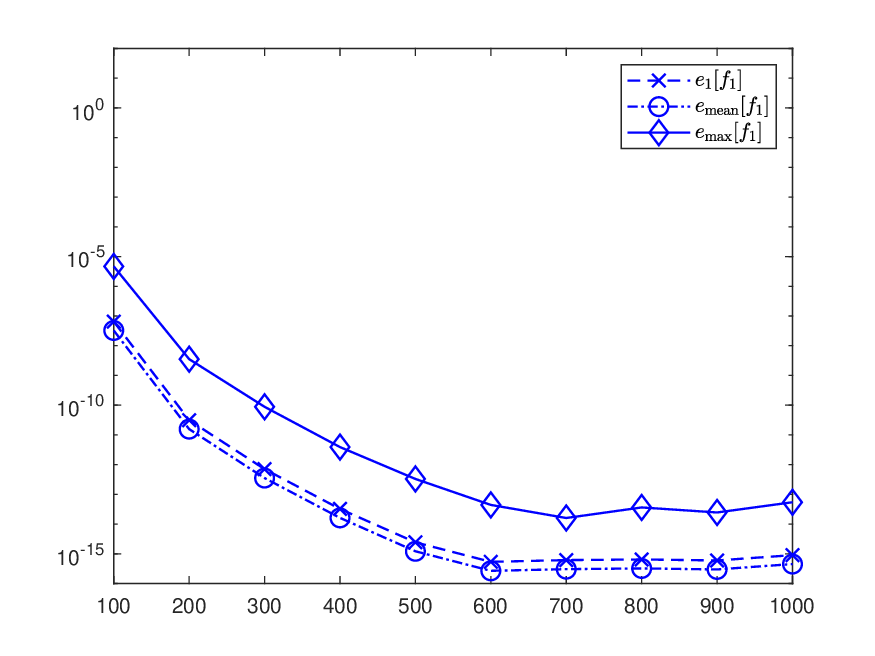}
    \caption{Plot of the function $f_1$ in $[-1,1]$ (up left) and trend of the maximum error, mean error and the error in $L_1$-norm computed on the set of segments $\mathcal{S}_n$ based on different sets of equispaced nodes $X_n$, $n=100:100:1000$ by fixing $d=3$ (up right), $d=6$ (bottom left), $d=9$ (bottom right).}
    \label{Runge}
\end{figure}

\begin{figure}
    \centering
    \includegraphics[width=0.49\linewidth]{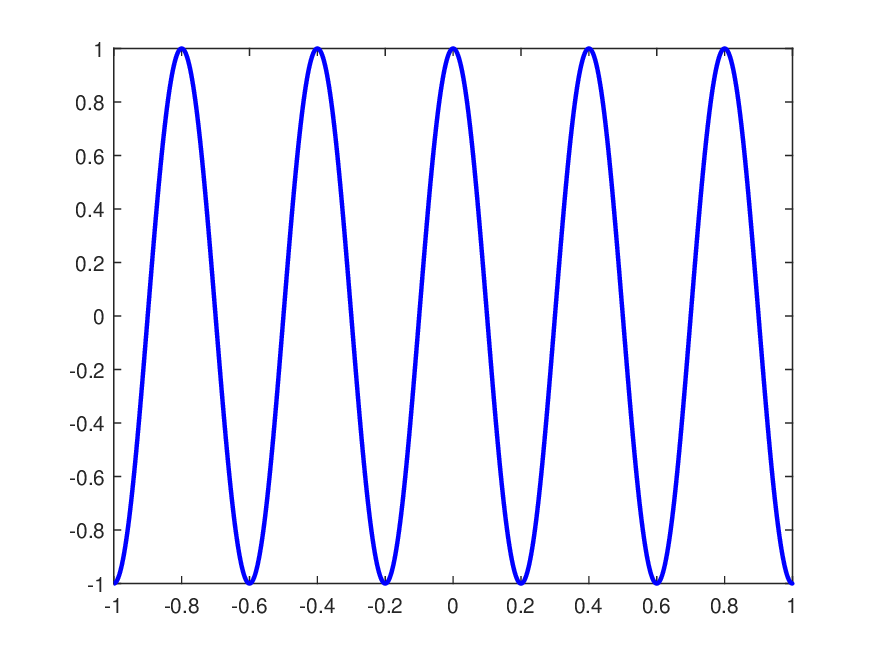}
    \includegraphics[width=0.49\linewidth]{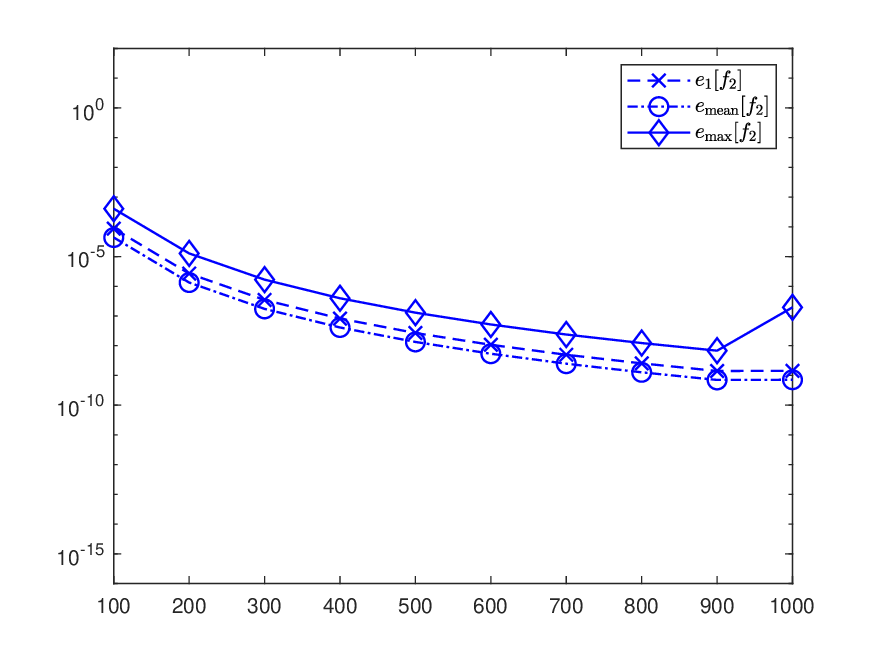}

    \includegraphics[width=0.49\linewidth]{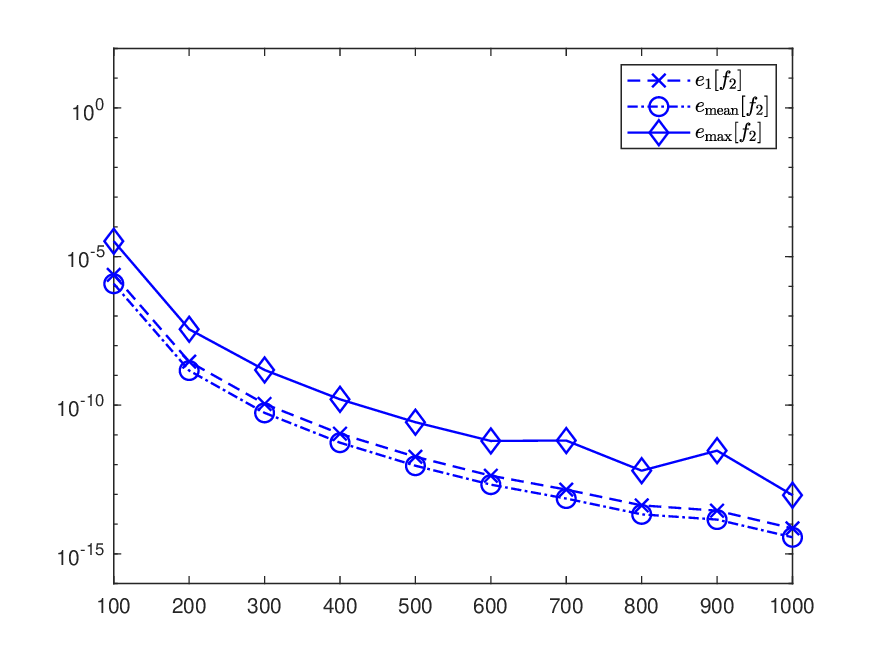}
    \includegraphics[width=0.49\linewidth]{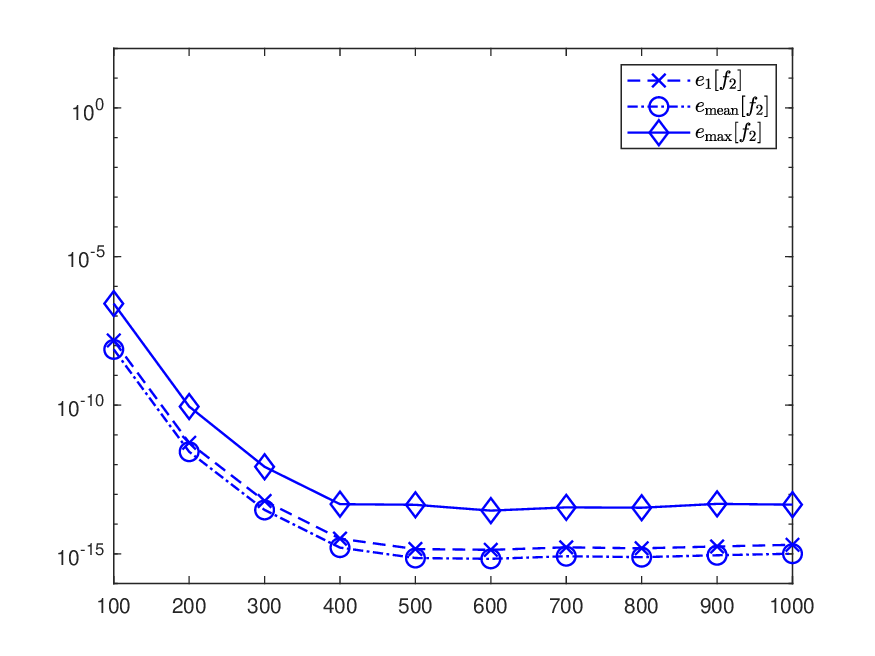}
    \caption{Plot of the function $f_2$ in $[-1,1]$ (up left) and trend of the maximum error, mean error and the error in $L_1$-norm computed on the set of segments $\mathcal{S}_n$ based on different sets of equispaced nodes $X_n$, $n=100:100:1000$ by fixing $d=3$ (up right), $d=6$ (bottom left), $d=9$ (bottom right).}
    \label{Cos5x}
\end{figure}

\begin{figure}
    \centering
    \includegraphics[width=0.49\linewidth]{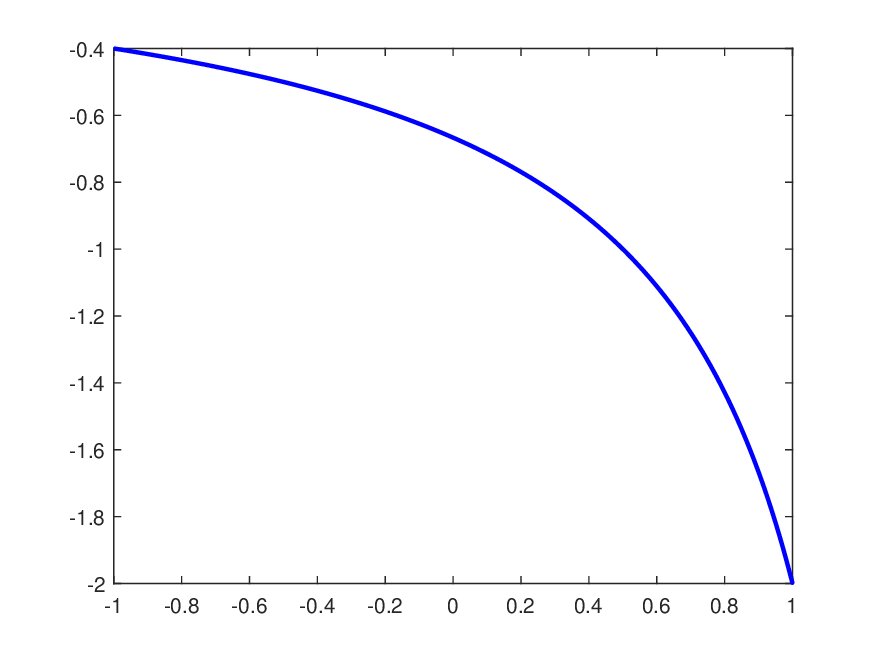}
    \includegraphics[width=0.49\linewidth]{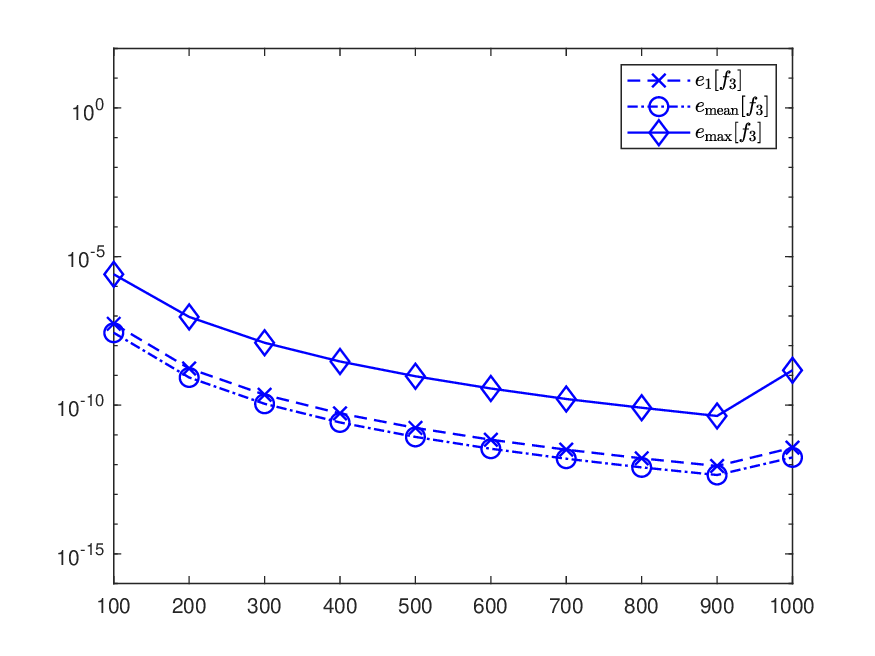}

    \includegraphics[width=0.49\linewidth]{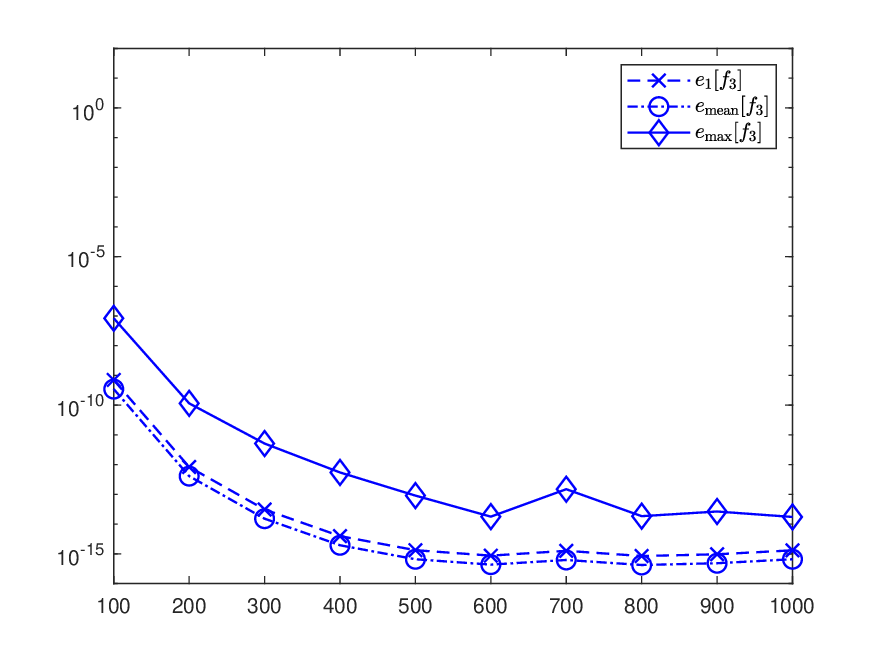}
    \includegraphics[width=0.49\linewidth]{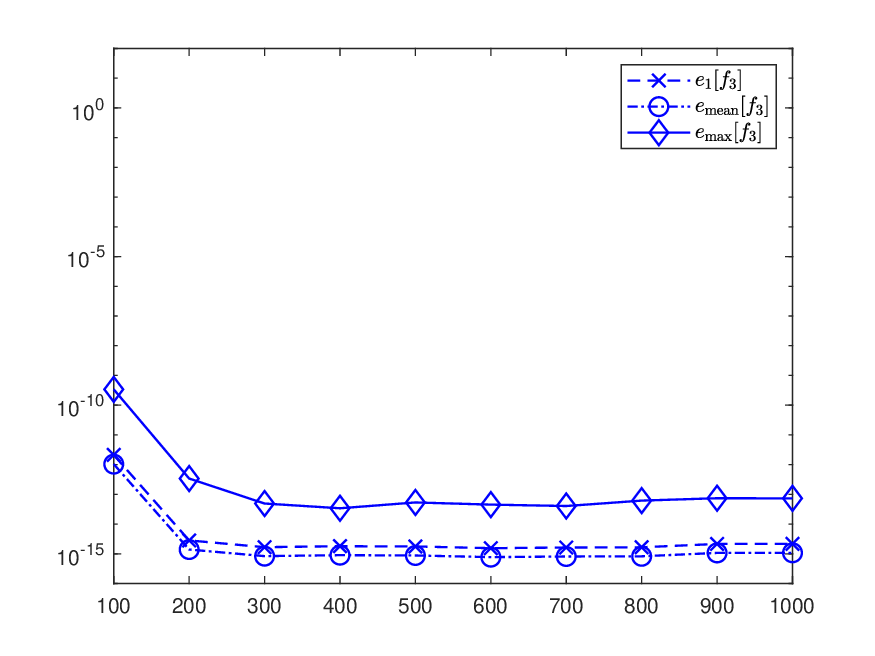}
    \caption{Plot of the function $f_3$ in $[-1,1]$ (up left) and trend of the maximum error, mean error and the error in $L_1$-norm computed on the set of segments $\mathcal{S}_n$ based on different sets of equispaced nodes $X_n$, $n=100:100:1000$ by fixing $d=3$ (up right), $d=6$ (bottom left), $d=9$ (bottom right).}
    \label{f3}
\end{figure}

\begin{figure}
    \centering
    \includegraphics[width=0.49\linewidth]{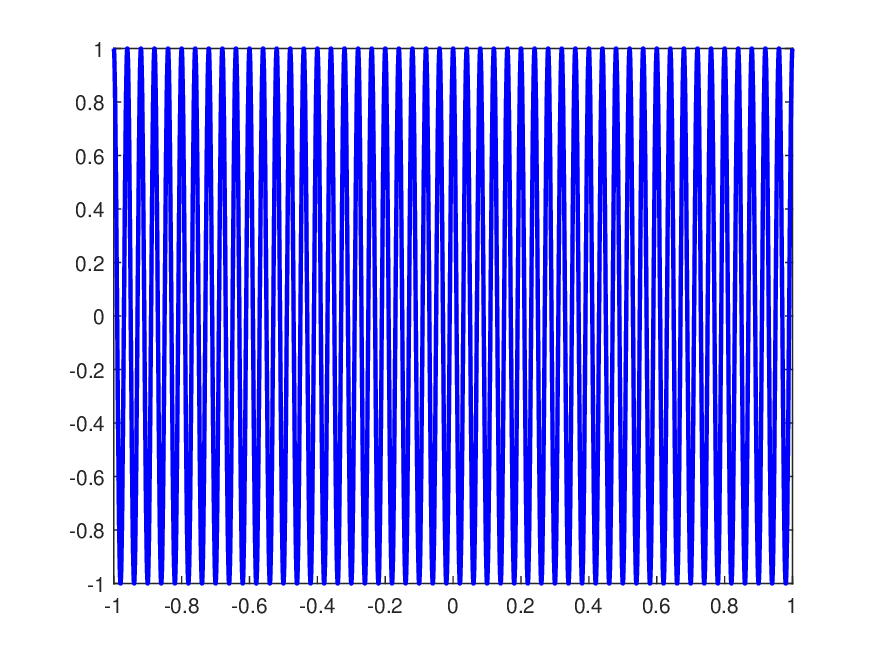}
    \includegraphics[width=0.49\linewidth]{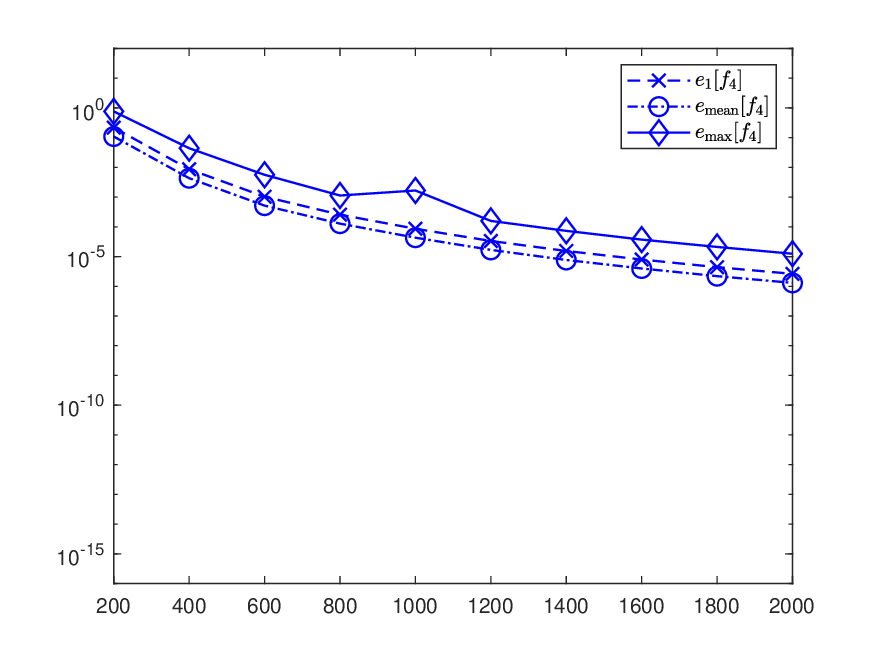}

    \includegraphics[width=0.49\linewidth]{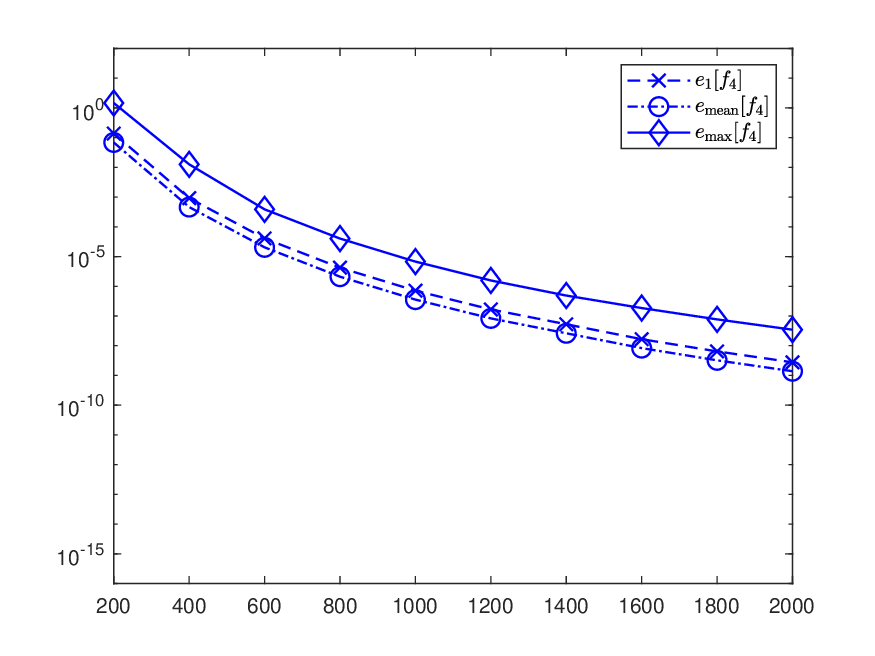}
    \includegraphics[width=0.49\linewidth]{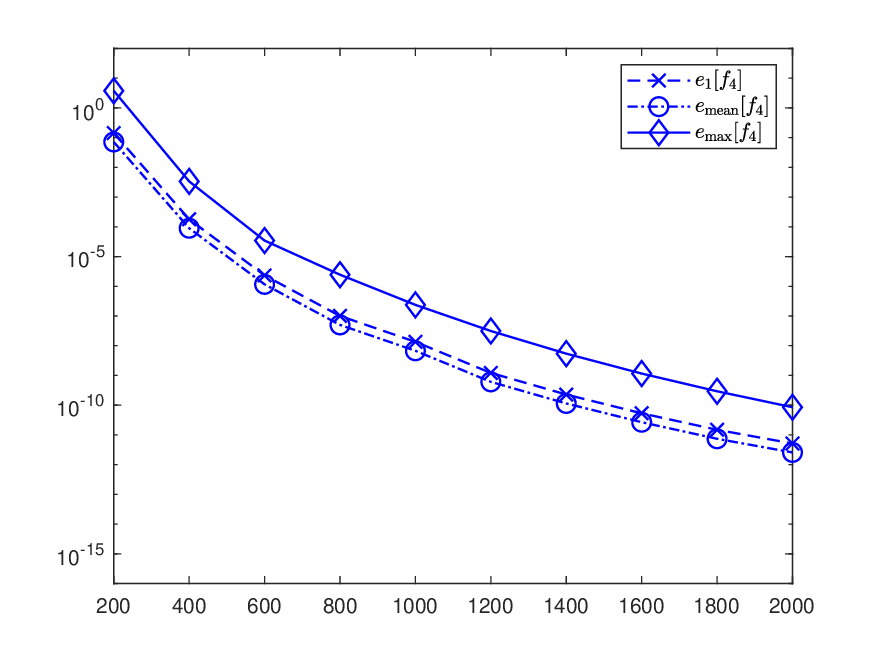}

    \includegraphics[width=0.49\linewidth]{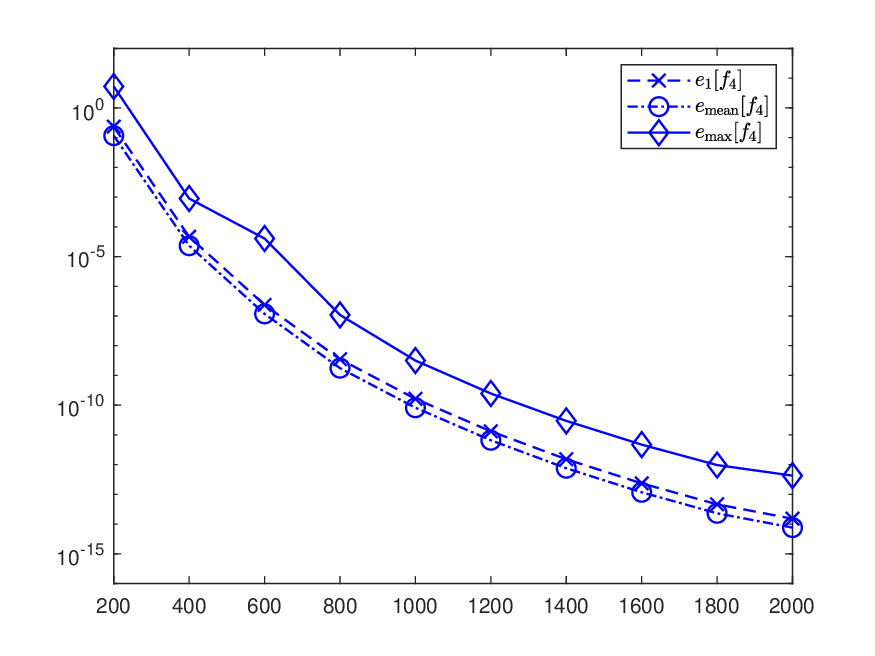}
    \includegraphics[width=0.49\linewidth]{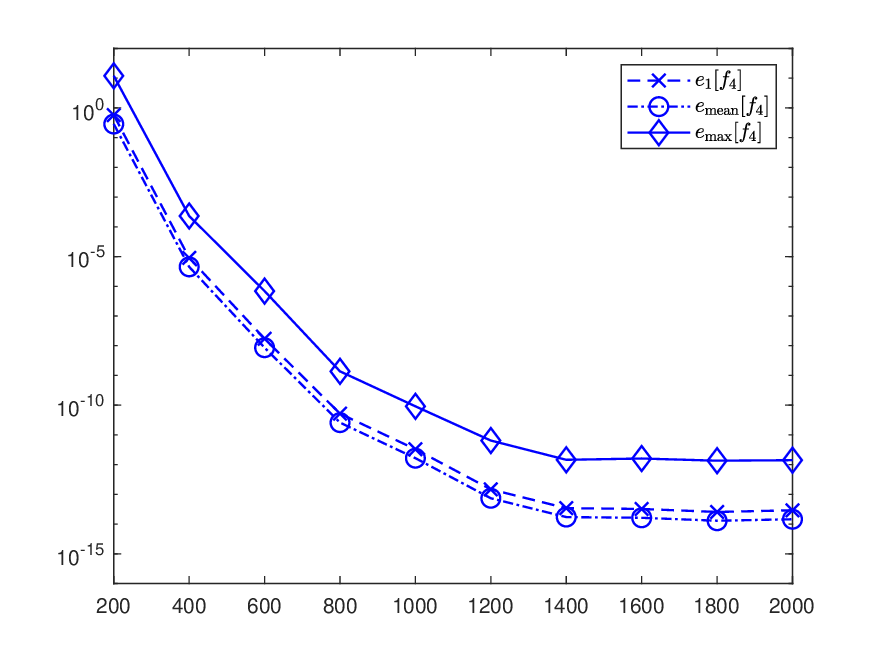}
    
    \caption{Plot of the function $f_4$ in $[-1,1]$ and trend of the maximum error, mean error and the error in $L_1$-norm computed on the set of segments $\mathcal{S}_n$ based on different sets of equispaced nodes $X_n$, $n=200:200:2000$ by varying $d=3:3:15$.}
    \label{cos50x}
\end{figure}
From these plots, it can be observed that the error in the $L_1$-norm lies between the mean approximation error and the maximum approximation error for any value of $n$ and any degree $d$. Moreover, the approximation errors consistently decrease as the number of equispaced nodes increases. Additionally, they diminish with an increase in the degree of the local histopolation polynomial until reaching the epsilon machine.

\subsection{Numerical test $2$}
In the second numerical test, we consider the following functions in $[-1,1]$
\begin{equation*}
    f_5(x)=\begin{cases}
        \sin\left(\frac{17}{8} \pi x\right) & x \leq 0, \\
        \frac{1}{2}\sin\left(\frac{17}{8} \pi x\right) + 10 & x>0,     
    \end{cases}
    \qquad
    f_6(x)=\begin{cases}
        \frac{1}{2} x^5-x^2 & x \leq 0, \\
        x^6-x^4+x^2-2 & x>0,     
    \end{cases} 
    \end{equation*}
    \begin{equation*}
    f_7(x)=\begin{cases}
        e^{\frac{1}{2}(x+1)} &  x \leq 0, \\
        1+e^{\frac{1}{4}(x+1)^2} & x>0,  
    \end{cases}
    \qquad
    f_8(x)=\begin{cases}
        \frac{5}{\left(\frac{x}{4}\right)^2+1} & \lvert x \rvert \geq \frac{1}{2}, \\
        \frac{3}{2} & -\frac{1}{2} < x < 0, \\
        \frac{1}{4} & 0 \leq x < \frac{1}{2},
    \end{cases}
\end{equation*}
where $f_5$, $f_6$ are defined in~\cite{Amat:2022:ACO}, $f_7$ in~\cite{Arandiga:2024:EAW} and $f_8$ in~\cite{Costarelli:2017:ADS}. 
To assess the quality of the approximation provided by the quasi-histopolation operator $\mathcal{Q}_{\mathcal{H},\mu}[f]$ defined in~\eqref{finalop}, in Figs.~\ref{plotf5}-\ref{plotf8}, we reconstruct the functions $f_i$, $i=5,6,7,8$, highlighting the behavior of the approximant near the singularity.
\begin{figure}
    \centering
    \includegraphics[width=0.49\linewidth]{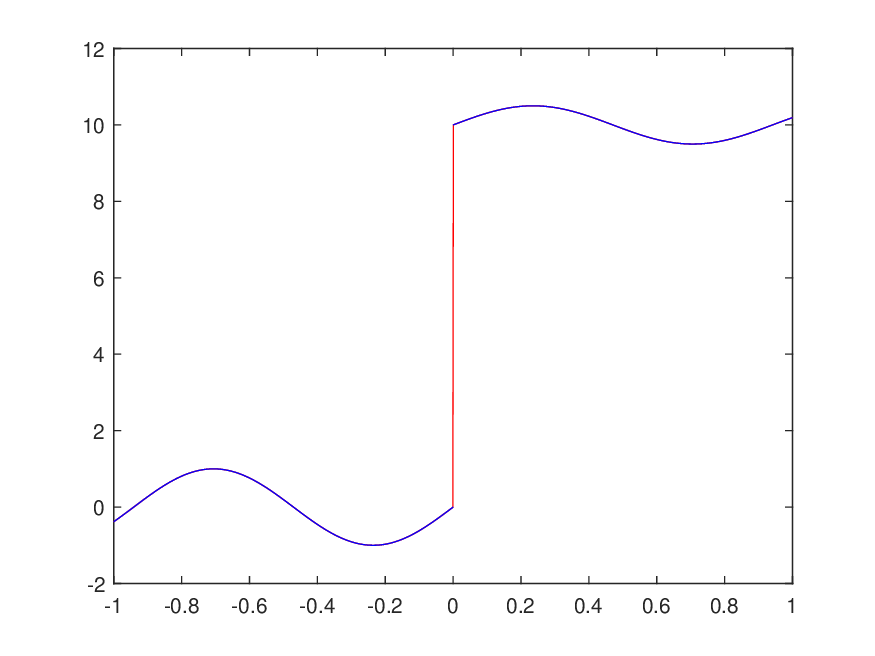}
    \includegraphics[width=0.49\linewidth]{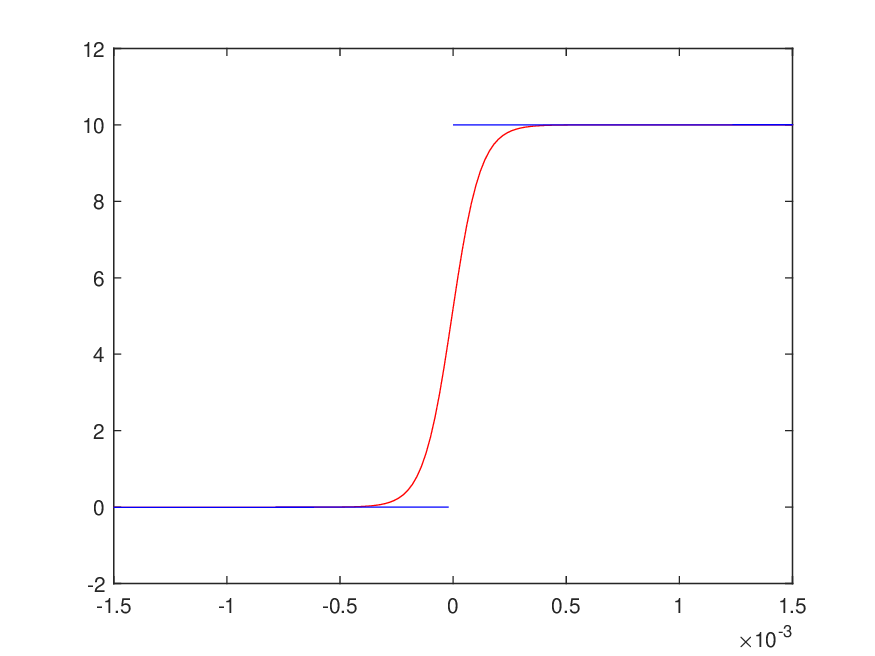}
    \caption{On the left, the plots of the function $f_5$ (blue) and of the quasi-histopolant $\mathcal{Q}_{\mathcal{H},\mu}[f_5]$ (red) computed on the set of segments $\mathcal{S}_n$ based on the set of equispaced nodes $X_n$, with $n=1025$ and by fixing $d=3$. On the right, a zoom near the singularity highlights the smoothness of the quasi-histopolant and the absence of the Gibbs phenomenon.}
    \label{plotf5}
\end{figure}
\begin{figure}
    \centering
    \includegraphics[width=0.49\linewidth]{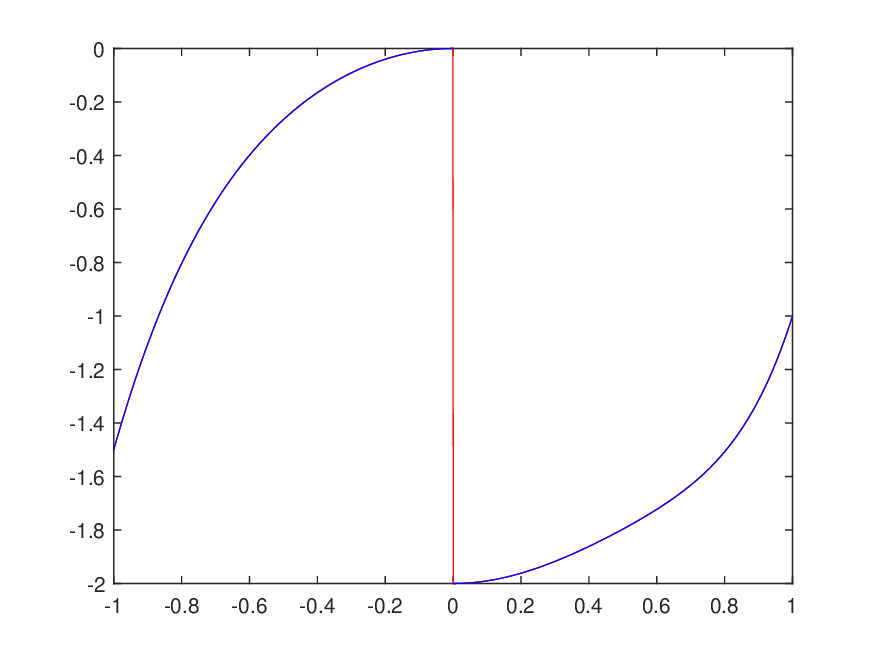}
    \includegraphics[width=0.49\linewidth]{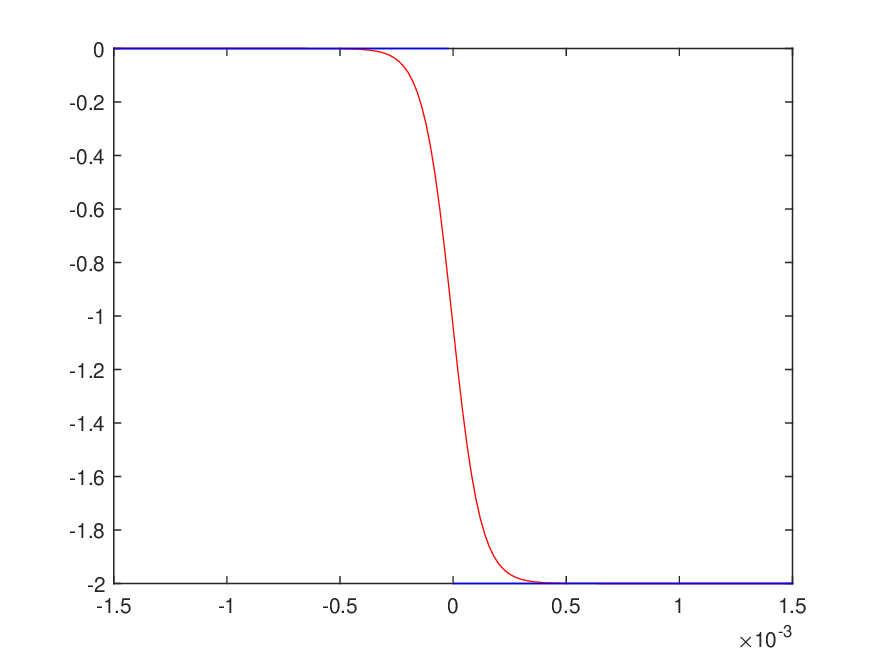}
    \caption{On the left, the plots of the function $f_6$ (blue) and of the quasi-histopolant $\mathcal{Q}_{\mathcal{H},\mu}[f_6]$ (red) computed on the set of segments $\mathcal{S}_n$ based on the set of equispaced nodes $X_n$, with $n=1025$ and by fixing $d=3$. On the right, a zoom near the singularity highlights the smoothness of the quasi-histopolant and the absence of the Gibbs phenomenon.}
    \label{plotf6}
\end{figure}
\begin{figure}
    \centering
    \includegraphics[width=0.49\linewidth]{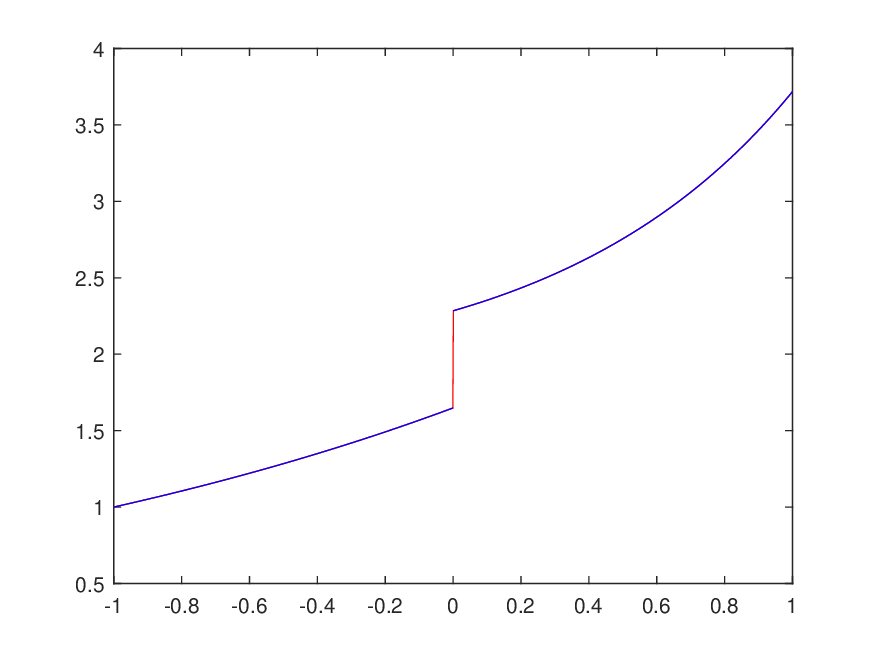}
    \includegraphics[width=0.49\linewidth]{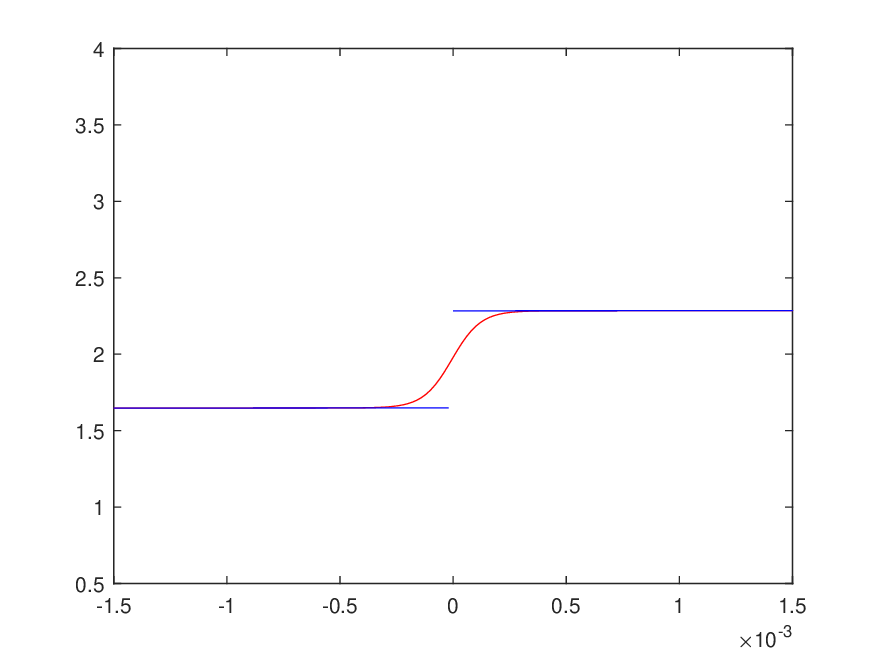}
    \caption{On the left, the plots of the function $f_7$ (blue) and of the quasi-histopolant $\mathcal{Q}_{\mathcal{H},\mu}[f_7]$ (red) computed on the set of segments $\mathcal{S}_n$ based on the set of equispaced nodes $X_n$, with $n=1025$ and by fixing $d=3$. On the right, a zoom near the singularity highlights the smoothness of the quasi-histopolant and the absence of the Gibbs phenomenon.}
    \label{plotf7}
\end{figure}
\begin{figure}
    \centering
    \includegraphics[width=0.49\linewidth]{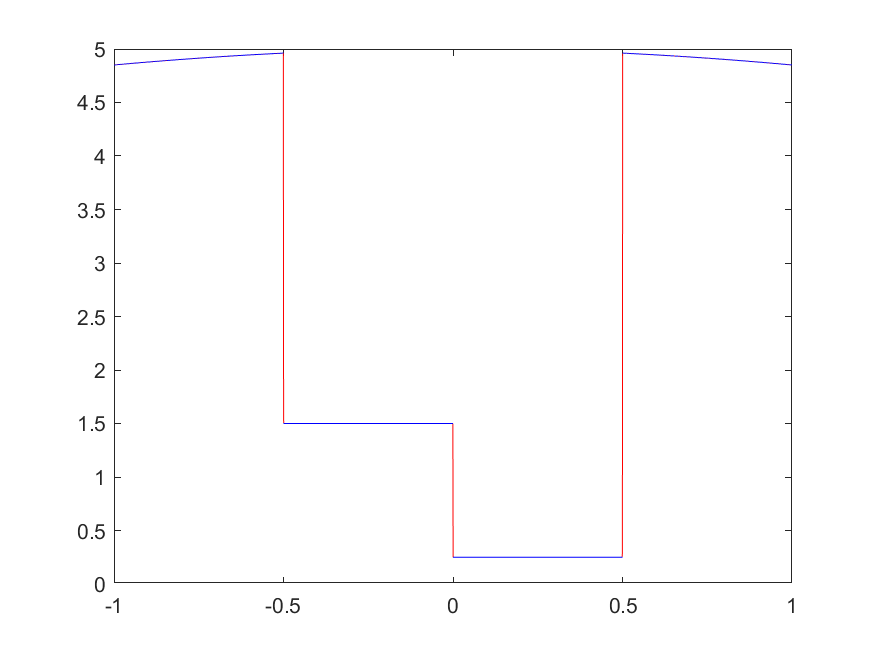}
    \includegraphics[width=0.49\linewidth]{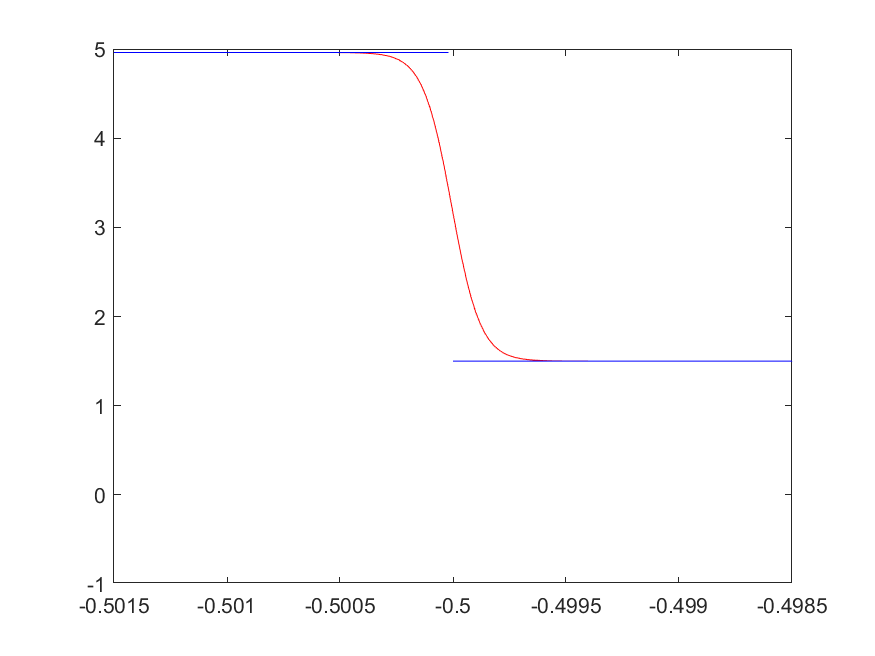}

    \includegraphics[width=0.49\linewidth]{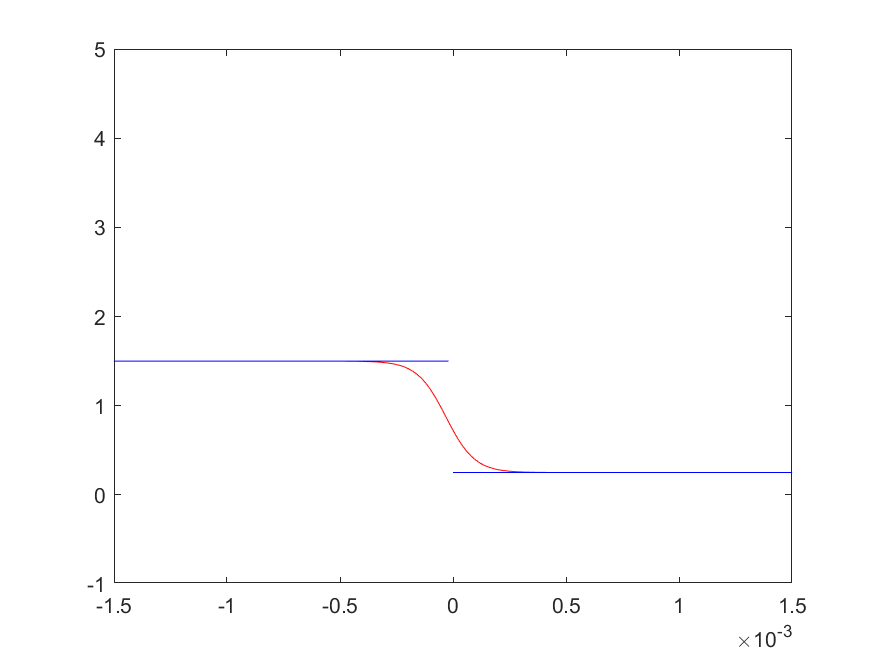}
    \includegraphics[width=0.49\linewidth]{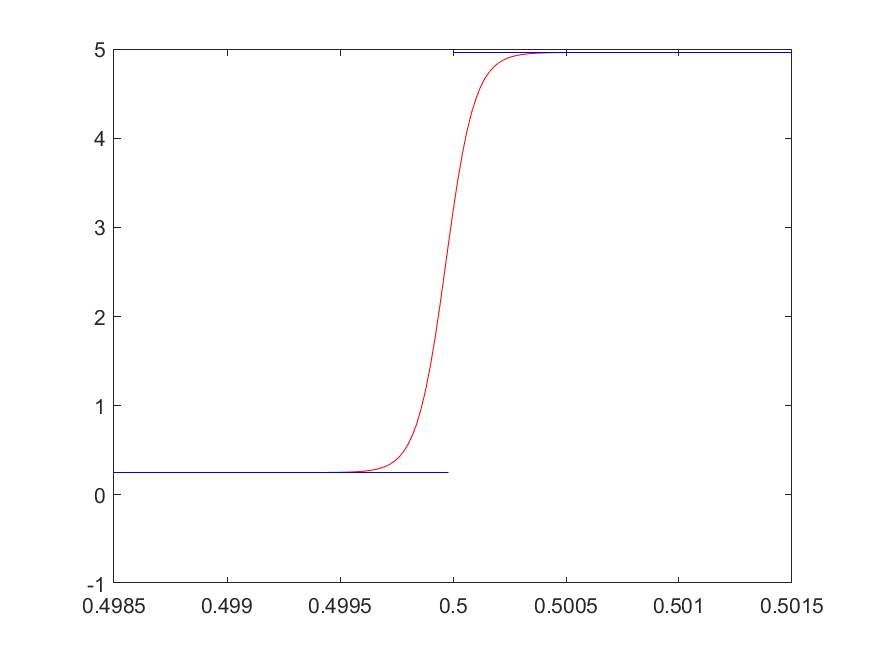}
    \caption{On the left, the plots of the function $f_8$ (blue) and of the quasi-histopolant $\mathcal{Q}_{\mathcal{H},\mu}[f_8]$ (red) computed on the set of segments $\mathcal{S}_n$ based on the set of equispaced nodes $X_n$, with $n=1025$ and by fixing $d=3$. On the right, a zoom near the singularity highlights the smoothness of the quasi-histopolant and the absence of the Gibbs phenomenon.}
    \label{plotf8}
    \end{figure}
    \newline
Finally, for the function $f_5$, we compute the maximum approximation error~\eqref{erroremassimo} on the set of segments $\mathcal{S}_n$ based on the set of equispaced nodes $X_n$, with $n=1025$ and relative to the set of $n_e$ equispaced points, along with the minimum distance between $X_{n_e}$ and the singularity $s=0$ for different values of $n_e$, starting from $500$ and doubling for $i=1, \dots, 4$, with degrees $d=2, 3, 4, 5$. The points that realize the minimum distance between $X_{n_e}$ and $s=0$, for any value of $n_e$, are denoted by $\mp t_i$, as shown in Fig.~\ref{singularitypointwithti}.
The results are presented in Tables~\ref{tab1}--\ref{tab3} for $K=10, 15, 20$, respectively. From these Tables, we observe that as the number $n_e$ increases, the points $\mp t_i$ approach zero. It is noteworthy that for $i=1$, all $n_e$ evaluation points lie within the interval of continuity of the function $f_1$, whereas for $i=2, 3, 4$, some evaluation points lie outside this interval. Consequently, as the number of evaluation points $n_e$ increases, the minimum distance decreases, leading to a deterioration in the maximum approximation error due to the presence of the singularity.
On the other hand, increasing the degree $d$ reduces the maximum approximation error until it reaches a larger value, which depends on $K$.

 \begin{figure}
     \centering
    \includegraphics[width=0.6\linewidth]{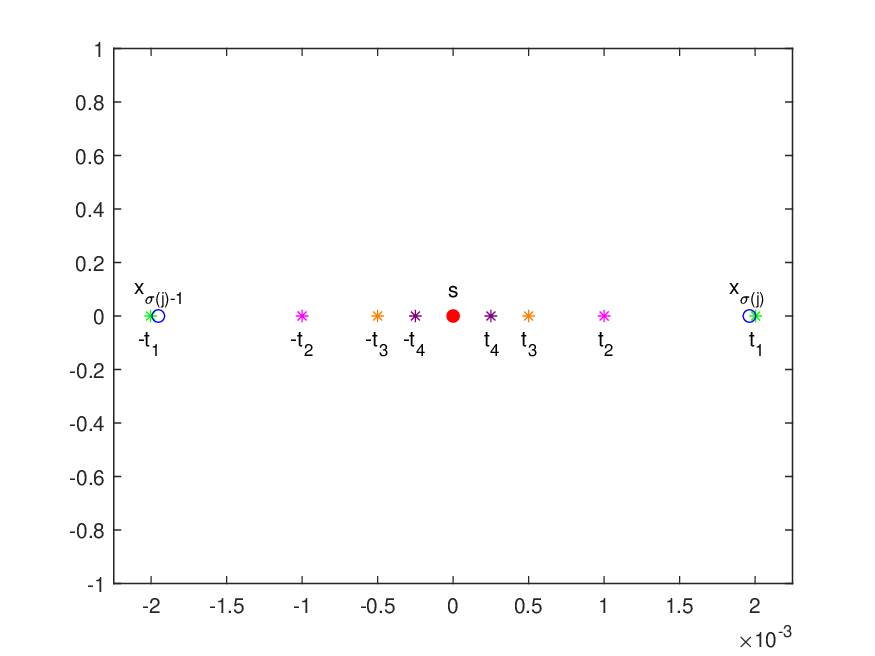}
     \caption{Plot of the singularity point $s=0$ of the function $f_5$, the subsequent nodes $x_{\sigma(j)-1}$, $x_{\sigma(j)}$ bounding an interval containing $s$ and the four couples of two consecutive evaluation points $\mp t_i$, $i=1,\dots,4$, bounding intervals which contain the singularity point $s$ and no any other evaluation point.}
     \label{singularitypointwithti}
 \end{figure}
    \begin{table}
    \centering
    \begin{tabular}{|c|c|c|c|c|c|c|c|}
    \hline
    $i$ & $n_e$ & $\mp t_i$ & $d=2$ & $d=3$ & $d=4$ & $d=5$ \\\hline
    $1$ & $500$ & 2.0040e-03 &  5.1525e-07 &
   4.9831e-09 & 5.8677e-11 & 9.2664e-10  \\
    $2$ & $1000$ & 1.0010e-03 & 2.2003e-06 &
   1.0656e-06 & 1.1993e-05 & 6.0061e-04  \\
    $3$ & $2000$ & 5.0025e-04 & 2.8706e-03 & 3.8893e-03 & 1.5064e-02 & 2.6906e-01\\
    $4$ & $4000$ & 2.5006e-04 & 2.7313e-01 & 2.0466e-01 & 4.6859e-01 & 3.6115e+00  \\\hline        
    \end{tabular}
    \caption{Maximum approximation error for the function $f_5$ computed for $K=10$, $n=1025$ and varying the degree $d$ of the local interpolation polynomials from $2$ to $5$.
     The positions of the points $\mp t_i$ are also shown in column $3$.}
    \label{tab1}
\end{table}

    \begin{table}
    \centering
    \begin{tabular}{|c|c|c|c|c|c|c|c|}
    \hline
    $i$ & $n_e$ & $\mp t_i$ & $d=2$ & $d=3$ & $d=4$ & $d=5$ \\\hline
    $1$ & $500$ & 2.0040e-3 & 5.1525e-07 &  4.8759e-09 &
   5.8677e-11 & 5.8653e-13  \\
    $2$ & $1000$ & 1.0010e-03 & 1.9819e-06 &
   5.6674e-09 & 6.6691e-09 & 3.8303e-07  \\
    $3$ & $2000$ & 5.0025e-04 & 1.7335e-05 & 9.3846e-05 & 3.9392e-04 & 5.7528e-03\\
    $4$ & $4000$ & 2.5006e-04 & 1.5626e-02 & 3.9503e-02 & 8.0300e-02 & 5.9887e-01  \\\hline        
    \end{tabular}
    \caption{Maximum approximation error for the function $f_5$ computed for $K=15$, $n=1025$ and varying the degree $d$ of the local interpolation polynomials from $2$ to $5$.
     The positions of the points $\mp t_i$ are also shown in column $3$.}
    \label{tab2}
\end{table}

\begin{table}
    \centering
    \begin{tabular}{|c|c|c|c|c|c|c|c|}
    \hline
    $i$ & $n_e$ & $\mp t_i$ & $d=2$ & $d=3$ & $d=4$ & $d=5$ \\\hline
    $1$ & $500$ & 2.0040e-03 & 5.1525e-07 &  4.8538e-09 &
   5.8677e-11 & 5.7643e-13  \\
    $2$ & $1000$ & 1.0010e-03 &1.9819e-06 & 5.6576e-09 &
   3.0537e-10 & 2.4306e-10  \\
    $3$ & $2000$ & 5.0025e-04 & 3.5375e-06 & 1.2977e-06 & 1.0058e-05 & 1.2101e-04\\
    $4$ & $4000$ & 2.5006e-04 & 4.5416e-03 & 4.4043e-03 & 1.3102e-02 & 7.2323e-02  \\\hline        
    \end{tabular}
    \caption{Maximum approximation error for the function $f_5$ computed for $K=20$, $n=1025$ and varying the degree $d$ of the local interpolation polynomials from $2$ to $5$.
     The positions of the points $\mp t_i$ are also shown in column $3$.}
    \label{tab3}
\end{table}

\subsection{Numerical test $3$}
    In the third type of numerical tests, we calculate the trend of the approximation error in the $L_1$-norm for the functions $f_i$, $i=5,\dots,8$, based on the set of segments $\mathcal{S}_n$ for $n=100:200:1500$, $d=3$ and $K=10$. The results are shown in Fig.~\ref{trendL1normdisfun}, where we can observe that the error $e_1[f_i]$, $i=5,\dots,8$, decreases by increasing $n$.   

    Since for the functions $f_i$, $i=6,7,8$, the maximum approximation error goes rapidly to the epsilon machine, in Fig.~\ref{trendmaxerror} we only plot the trend of the maximum approximation error~\eqref{erroremassimo} for the function $f_5$ in the interval of continuity.
\begin{figure}
    \centering  \includegraphics[width=0.49\linewidth]{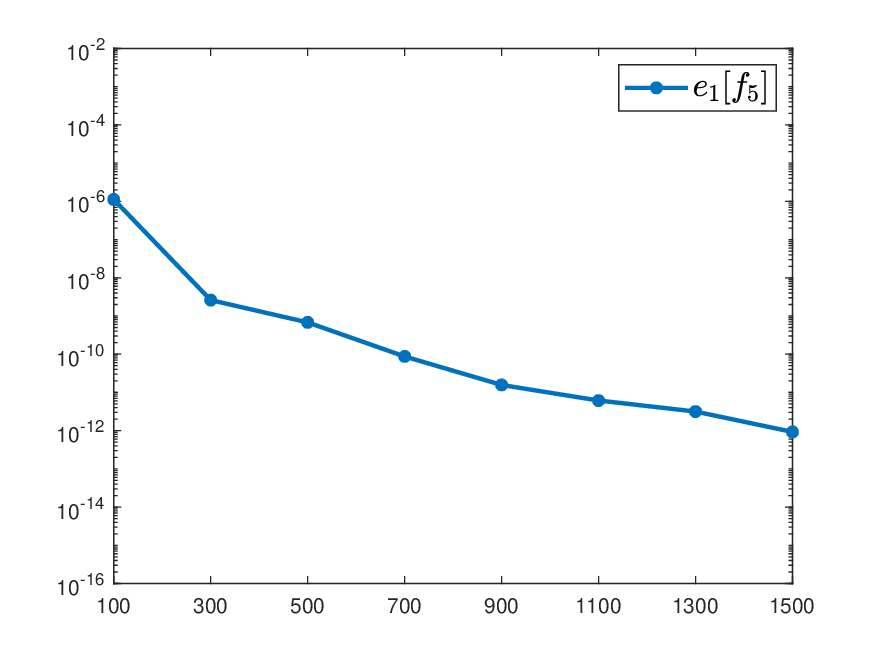}   \includegraphics[width=0.49\linewidth]{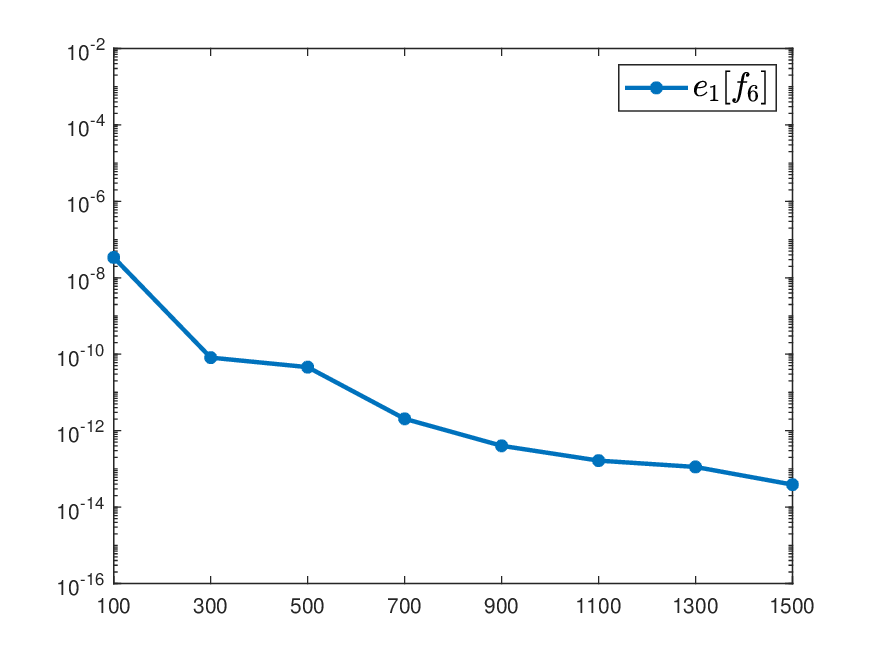}

\includegraphics[width=0.49\linewidth]{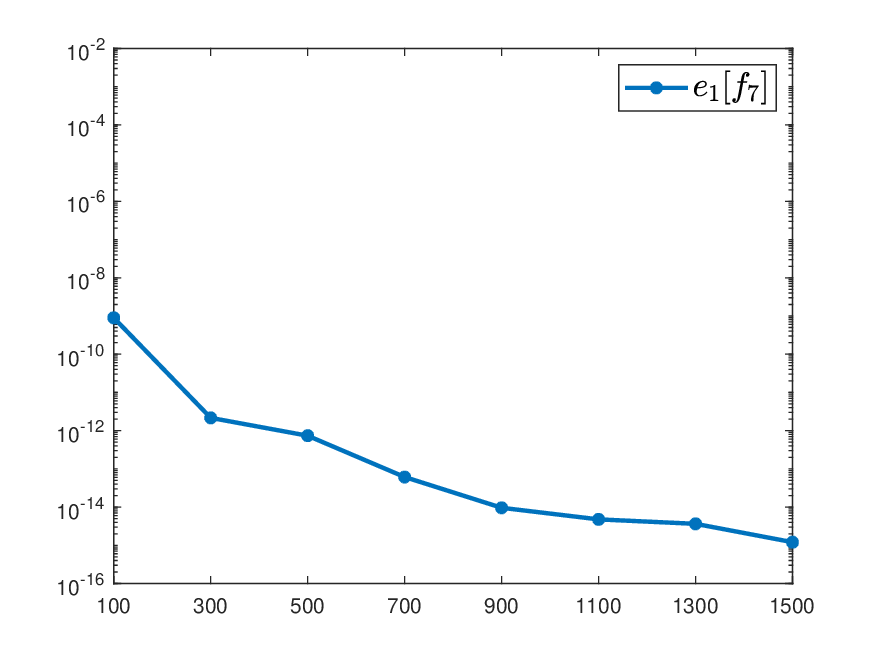}
\includegraphics[width=0.49\linewidth]{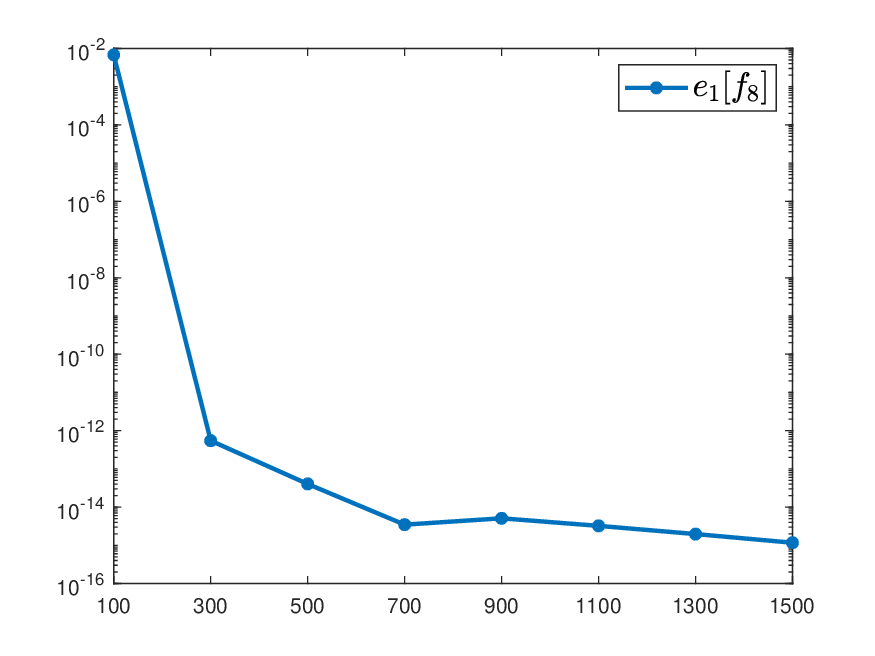}
    \caption{Trend of the approximation error in $L_1$-norm for the functions $f_i$, $i=5,6,7,8$, based on the set of segments $\mathcal{S}_n$, $n=100:200:1500$ and by using $d=3$, $K=10$.}
    \label{trendL1normdisfun}
\end{figure}

\begin{figure}
    \centering
    \includegraphics[width=0.5\linewidth]{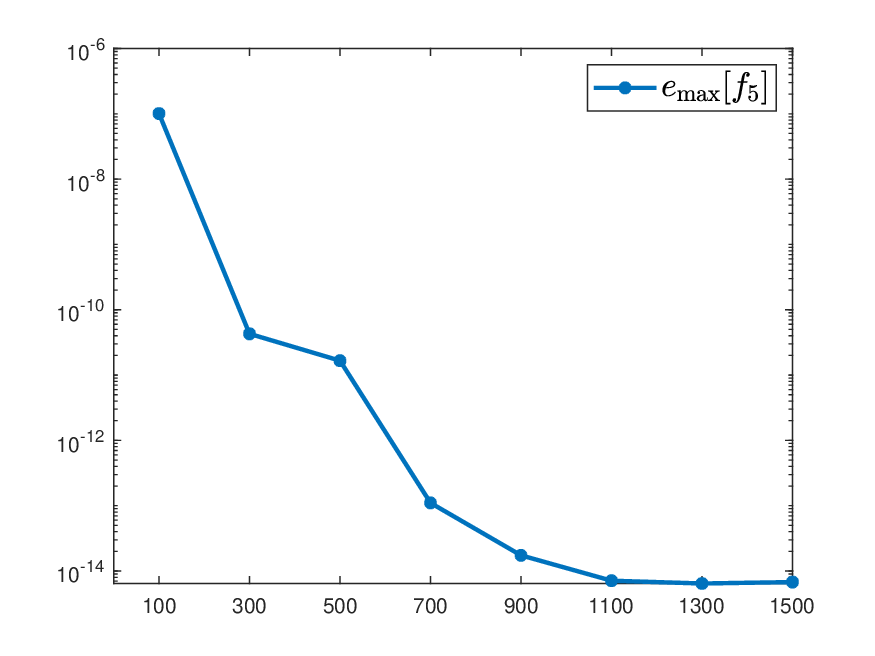}
    \caption{Trend of $e_{\max}[f_5]$ for $K=25$, $d=5$ and for different values of $n=100:200:1500$.} 
    \label{trendmaxerror}
\end{figure}

\subsection{Numerical test $4$}
In the final set of numerical experiments, we consider the test functions, previously used in~\cite{Bruno:2024:PHM}:
\begin{equation*}
\begin{array}{lll}
    g_1(x) = \dfrac{1}{1 + 25x^2}, & g_2(x) = \dfrac{1}{1 + 8x^2}, & g_3(x) = e^{x^{2}+1}, \\
    & & \\
    g_4(x) = \cos(5x), & g_5(x) = \dfrac{1}{x-1.5}, & g_6(x) = x \left\lvert x \right\rvert^3.
    \end{array}
\end{equation*}
In the following, we compare the approximation accuracy achieved by our quasi-histopolation method (for different values of $d$) against standard histopolation on equispaced segments, as well as with the following methods introduced in~\cite{Bruno:2024:PHM}:
\begin{itemize}
    \item Concatenated mock-Chebyshev method ($e^{\mathrm{MC}}$);
    \item Quasi-nodal mock-Chebyshev method ($e^{\mathrm{MCF}}$); 
    \item Constrained mock-Chebyshev segments method ($\hat{e}^{\mathrm{MCF}}$).
\end{itemize}
All tests are performed with $\mu = 4$ and $d = 3:3:12$.

The maximum approximation error is computed on the set of segments $\mathcal{S}_n$ based on the set of equispaced nodes $X_n$, with $n = 51$. The results are reported in Table~\ref{tab:firsttesti}.
As observed, for functions $g_1$, $g_2$, and $g_6$, our quasi-histopolation method already outperforms the other approaches for $d=3$, yielding significantly smaller errors. For the other functions, the approximation error for $d = 3$ is notably higher than that obtained with the constrained mock-Chebyshev segments method. However, as the degree $d$ increases, the error associated with our method decreases and eventually becomes comparable to, or even smaller than, that of the other methods.  
\begin{table}[H]
    \centering
    \begin{tabular}{ c | c | c | c | c | c | c| c| c}
        & $e^{\mathrm{eq}}_n$ & $e^{\mathrm{MC}}$ & $e^{\mathrm{MCF}}$ & $\hat{e}^{\mathrm{MCF}}$ & $d=3$ & $d=6$ & $d=9$ & $d=12$  \\ \hline\hline
        $g_1(x)$ & 4.77e+06 & 6.19e-02 & 7.39e-02 & 2.67e-01 & 2.01e-03 & 5.77e-04 & 3.02e-03 & 2.17e-04 \\ \hline
        $g_2(x)$ & 6.57e+02 & 1.12e-02 & 9.19e-03 & 1.25e-02 & 1.42e-04 & 3.04e-05 & 2.87e-05 & 2.70e-06 \\ \hline
        $g_3(x)$ & 4.05e-03 & 2.10e-08 & 8.48e-10 & 5.90e-13 & 2.48e-05 & 4.77e-07 & 3.52e-10 & 2.90e-12 \\ \hline
        $g_4(x)$ & 2.67e-03 & 9.12e-07 & 2.85e-08 & 7.43e-13 & 4.75e-05 & 1.31e-06 & 4.77e-09 & 6.77e-12 \\ \hline
        $g_5(x)$ & 1.68e-03 & 6.43e-06 & 1.61e-06 & 2.94e-08 & 4.74e-05 & 4.24e-06 & 1.01e-07 & 1.10e-08\\ \hline
        $g_6(x)$ & 1.53e+06 & 1.31e-04 & 1.22e-04 & 2.33e-04 & 5.83e-06 & 6.78e-06 & 1.18e-05 & 2.54e-07 \\ \hline
    \end{tabular}
    \caption{Comparison of the maximum approximation error for the functions $g_i$, $i=1,\dots,6$, on the set of segments $\mathcal{S}_n$ based on the equispaced node set $X_n$ with $n = 51$, obtained using standard histopolation ($e^{\mathrm{eq}}_n$), concatenated mock-Chebyshev ($e^{\mathrm{MC}}$), quasi-nodal mock-Chebyshev ($e^{\mathrm{MCF}}$), constrained mock-Chebyshev segments ($\hat{e}^{\mathrm{MCF}}$), and the proposed quasi-histopolation method for different values of $d$, ranging from $d=3$ to $d=12$.}
    \label{tab:firsttesti}
\end{table}

\section{Conclusions and future works}\label{conclusions}
To approximate functions with jump discontinuities, in this work, we have introduced a smooth, $C^{\infty}$ rational quasi‐histopolation operator by blending local histopolation polynomials on a few nodes using multinode Shepard functions as blending functions. Numerical experiments on both smooth and discontinuous test functions confirm high accuracy in approximation and the
absence of Gibbs-type oscillations.
Future research will focus on extending this approach to higher-dimensional settings and investigating its potential applications in image reconstruction.

\section*{Acknowledgments}
The authors are grateful to the anonymous reviewers for carefully reading the manuscript and for their precise and helpful suggestions which allowed to improve the work.

This research has been achieved as part of RITA \textquotedblleft Research
 ITalian network on Approximation'' and as part of the UMI group ``Teoria dell'Approssimazione
 e Applicazioni''. The research was supported by GNCS-INdAM 2025 project \lq\lq Polinomi, Splines e Funzioni Kernel: dall'Approssimazione Numerical Software Open-Source\rq\rq. The authors are members of the INdAM-GNCS Research group. The work of F. Nudo has been funded by the European Union – NextGenerationEU under the Italian National Recovery and Resilience Plan (PNRR), Mission 4, Component 2, Investment 1.2 \lq\lq Finanziamento di progetti presentati da giovani ricercatori\rq\rq,\ pursuant to MUR Decree No. 47/2025.

\bibliographystyle{elsarticle-num}
\bibliography{bibliography.bib}

\end{document}